\documentclass[12pt]{article}
\usepackage{mathrsfs}
\usepackage{}
\usepackage{amsfonts}
\usepackage{amssymb} 
\usepackage{amsmath,amscd,graphicx}
\usepackage{amsthm}  
\usepackage{paralist}

\usepackage{color}

 \def\nnm{\notag}

       \textheight=230truemm
%
       \textwidth=160truemm

 \hoffset=0truemm
 \voffset=0truemm

 \topmargin=-10truemm
       \oddsidemargin=0truemm
       \evensidemargin=0truemm

       \newtheorem{lemma}{\bf Lemma}[section]
       \newtheorem{theorem}[lemma]{Theorem}

       \newtheorem{remark}[lemma]{\bf Remark}
       
       \numberwithin{equation}{section}
\newcommand{\bm}{\mathbb}
\begin{document}

\title{{\LARGE \textbf{Non-relativistic limit analysis of  the Chandrasekhar-Thorne relativistic Euler equations with physical vacuum}}
 \footnotetext{\emph{Corresponding Authors*:} hailiang.li.math@gmail.com(Hailiang Li), lasu$_{-}$mai@163.com(La-Su Mai*), pierangelo.marcati@gssi.infn.it(Pierangelo Marcati)}
}
\author{{Hai-Liang Li$^{a}$ ,~~La-Su Mai$^{*~d,c}$,~~ Pierangelo Marcati$~^{b,c},$}\\[2mm]
{ $^{a}$ \it\small  Department of Mathematics and BCMIIS,  Capital Normal University}\\
 {\it\small Beijing 100048, P.R. China}\\
 {$^{b}$ \it\small Dipartimento di Matematica Pura ed Applicata, Universit$\grave{a}$ degli Studi dell' Aquila}\\
{\it \small 67100 L'Aquila, Italy}\\
 {$^{c}$
\it \small Gran Sasso Science Institute- GSSI School for Advanced Studies}\\
 {\it \small 67100 L'Aquila, Italy}\\
 {$^{d}$
\it \small School of Mathematical Science,
Inner Mongolia University}\\
 {\it \small Hohhot 010021, P.R. China}\\}\date{}

\maketitle
\begin{abstract}
Our results provide a first step to make rigorous the formal analysis in terms of $\frac{1}{c^2}$ proposed by Chandrasekhar \cite{Chandra65b}, \cite{Chandra65a}, motivated by the methods of Einstein, Infeld and Hoffmann, see  Thorne \cite{Thorne1}.  We consider the non-relativistic limit for the local smooth solutions to the free boundary value problem of the cylindrically symmetric relativistic Euler equations, when the mass energy density includes the vacuum states at the free boundary. For large enough (rescaled) speed of light $c$ and suitably small time $T,$ we obtain uniform, with respect to $c,$ \lq\lq a priori\rq\rq estimates for the  local smooth solutions.  Moreover, the smooth solutions of the cylindrically symmetric relativistic Euler equations converge to the solutions of the classical compressible Euler equation, at the rate of order $\frac{1}{c^2}$.

\end{abstract}
\noindent{\textbf{Key words.} Relativistic Euler equations; local smooth solution; physical vacuum ; non-relativistic limits. }
\tableofcontents
\section{Introduction}\label{sec-1}

Our analysis aims to investigate the post-Newtonian approximation of the Euler fluid system by taking into account the effects of order $\frac{1}{c^2}$ from the General Relativity Einstein{'}s equations.
\par  We consider the isentropic relativistic Euler equations with conservation laws of the baryon numbers and the momentum \cite{Liang,Taub}. Let
\begin{equation}\label{TE}
\begin{aligned}
T^{\mu \lambda}=(e+p)u^{\mu}u^{\lambda}+pg^{\mu \lambda}
\end{aligned}
\end{equation}
be the relativistic energy momentum tensor, where   $\mu, \lambda=0...3$, while $e$ is the relativistic rest energy density, $g^{\mu \lambda}$  is the Minkowski metric tensor, $\text{sign}\ g^{\mu \lambda}=(-,+,+,+)$ and
$\mathbf{u}$ denotes the 4 - vector flow velocity. Then, in an arbitrary Lorentz frame, where
$\mathbf{u}= [\frac{1}{ {\sqrt{1-|{\mathbf{v}}|^{2}/c^{2}}}},\frac{\mathbf{v}}{ {\sqrt{1-|{\mathbf{v}}|^{2}/c^{2}}}} ]$ and where  $\mathbf{v}$ is the spatial velocity (particle speed),
the hydrodynamic equations reduces to the baryon number conservation law  and the continuity equation of the stress - energy tensor
$$(\widetilde{n}\mathbf{u^{\mu}})_{,\mu}=0, \qquad T^{\lambda\mu}_{\quad , \mu}=0,$$ namely, taking into account that classically $e=c^2\widetilde{\rho}$, it follows

\begin{equation}\label{RE}
\left\{\begin{aligned}
&\partial_{t}\left(\frac{\widetilde{n}}{\widetilde{\Theta}}\right)+\nabla\cdot
\left(\frac{\widetilde{n}}{\widetilde{\Theta}}{\mathbf{v}}\right)=0,\\
&\partial_{t}\left(\frac{\widetilde{\rho} c^{2}+p(\widetilde{\rho})}{c^{2}\widetilde{\Theta}^{2}}{\mathbf{v}}\right)
+\nabla\cdot\left(\frac{\widetilde{\rho} c^{2}+p(\widetilde{\rho})}{c^{2}\widetilde{\Theta}^{2}}{\mathbf{v}}\otimes{\mathbf{v}}\right)+\nabla p(\widetilde{\rho})
=0,
\end{aligned}
\right.
\end{equation}
where $ \widetilde{n}~$~and~ $c$ represent  the proper number density of baryons and the speed of light, respectively. The Lorentz factor $\widetilde{\Theta}$ satisfies  $\widetilde{\Theta}=\sqrt{1-|{\mathbf{v}}|^{2}/c^{2}}.$  The pressure $p(\widetilde{\rho})$ is given by
\begin{equation}\label{gamma-law}
p(\widetilde{\rho})=(\widetilde{\rho})^{\gamma} \quad\text{ for } \gamma>1,
\end{equation}  and the mass energy density $\widetilde{\rho}(\widetilde{n})$ is a function of $\widetilde{n}$ satisfying
\begin{equation}\frac{d\widetilde{\rho}}{d\widetilde{n}}=\frac{\widetilde{\rho}+p(\widetilde{\rho})/c^{2}}{\widetilde{n}},\label{first}\end{equation}
which is obtained by the first law of thermo-dynamics in the isentropic case. From \eqref{first} we  can derive the relation for $\widetilde{\rho}$ and $\widetilde{n}$ (see\cite{8}) as
\begin{equation}\widetilde{\rho}=\widetilde{n}(1-\frac{\widetilde{n}^{\gamma-1}}{c^{2}})^{\frac{1}{1-\gamma}},\label{1.1}\end{equation}



\noindent where for simplicity we assume

\begin{equation}\frac{\widetilde{n}^{\gamma-1}}{c^{2}}<1.\label{A1}\end{equation}

These physical models for relativistic Euler equations, have been in the literature since many years.
At the beginning of the 60{'}s various investigations  on the dynamical stability of gaseous masses, in the framework of the general theory of relativity, showed that the theory predicts, already in the post- Newtonian approximation, the phenomena which are qualitatively different from those to be expected on the Newtonian theory, namely gaseous masses are predicted to become dynamically unstable much before the Schwarzschild limit is reached.
Because of these results, in 1965 Chandrasekhar \cite{Chandra65b,Chandra65a}  was motivated to start a systematic investigation of the post-Newtonian effects of general relativity on the behaviors of hydrodynamic systems. It was then necessary to deduce the generalization of the standard Eulerian equations of Newtonian hydrodynamic which could consistently allow for all effects of order $\frac{1}{c^2},$ originating in the exact field equations of Einstein.

By following \cite{Chandra65b}, with the choice of the form of  $T^{\mu \lambda},$ the entire behavior of the system is then determined, in terms of initial conditions, by the Einstein field equations.
\begin{equation}\label{EE}
\begin{aligned}
R^{\mu \lambda}=-\frac{8\pi G}{c^4}(T^{\mu \lambda}-Tg^{\mu \lambda}),
\end{aligned}
\end{equation}
where $R^{\mu \lambda}$ is the Ricci tensor and $T$ is trace of the relativistic energy momentum tensor.
The asymptotic expansions in  $\frac{1}{c^2}$  following Einstein, Infeld, and Hoffmann allow to deduce  the correct hydrodynamic post--Newtonian formulation.
We refer to Thorne \cite{Thorne1} and Novikov and Thorne \cite{Novik} for a more accurate presentation.

\par In the theory of special relativity, the mass of matter is not conserved, whereas the particle numbers are conserved. In system \eqref{RE}, the first equation describes the conservation of baryon number, the second equation is the conservation of momentum equation. By $\eqref{RE}_{2}$ and \eqref{first} we can obtain energy equation similarly as the one dimensional argument made by Pant \cite{Pant-Thesis}:
 \begin{equation}\partial_{t}\left(\frac{\widetilde{\rho} c^{2}+p(\widetilde{\rho})}{c^{2}\widetilde{\Theta}^{2}}-\frac{p'(\widetilde{\rho})}{c^{2}}\right)
+\nabla\cdot\left(\frac{\widetilde{\rho} c^{2}+p(\widetilde{\rho})}{c^{2}\widetilde{\Theta}^{2}}\mathbf{v}\right)
=0.\label{moment}\end{equation}
Formally, in the non-relativistic limit as $c\rightarrow\infty$, the system \eqref{RE} reduces to the classical compressible Euler equations:
\begin{equation}
\left\{\begin{aligned}
&\partial_{t}\widetilde{\rho}+\nabla\cdot(\widetilde{\rho}{\mathbf{v}})=0,\\
&\partial_{t}(\widetilde{\rho}{\mathbf{v}})+\nabla\cdot(\widetilde{\rho}{\mathbf{v}}\otimes{\mathbf{v}})+\nabla p(\widetilde{\rho})=0.
\end{aligned}
\right.\label{Euler}
\end{equation}

It is an interesting and challenging problem ``per se" to analyze the well-posedness and  behaviors of strong/week solutions to the relativistic Euler equation and then to use these results to provide a rigorous justification to the relativistic Euler model. Recently, there have been made important progress on the mathematical theory on these topics. For instance, the global existence of Riemann solutions, BV solutions and related non-relativistic limits have been obtained in \cite{CCF,1116,8,1112,Geng-Li1,Li-Feng-Wang,Min-Ukai,PantCPDE,Pant-Thesis,sm} respectively for either the relativistic system Eq.~\eqref{RE} or the relativistic Euler equations consisting of the momentum equation $\eqref{RE}_{2}$ and energy equation \eqref{moment}. The construction of global weak solutions in  $L^{\infty}$ norm has been made in \cite{1113,14} and the well-posdness and blow-up of smooth solutions have been proved in \cite{M-T1,M-T2,pan-smoller} for Eq.~$\eqref{RE}_{2}$ and \eqref{moment}. There are also several results about the full relativistic Euler equations where the pressure $p$ depends on $\widetilde{\rho}$ and the internal energy $e,$  the interested readers can refer to \cite{Chen97,Geng-Li}.

The free boundary problem for the relativistic Euler equation \eqref{RE} is taken into consideration recently. If the mass energy density $\widetilde{\rho}$ is strictly positive up to the free boundary, i.e., the mass density connects with vacuum through the jump discontinuity,  Trakhinin \cite{TrakhininY} employed the Nash-Moser type iteration scheme to show the well-poseness of local classical solutions to the free boundary value problem of full relativistic Euler equations as mentioned above where the mass energy density was assumed to equal the particle number density. However, as the mass energy density $\widetilde{\rho}$ becomes zero at the moving boundary, i.e., the mass density connects with vacuum continuously,  the relativistic Euler equation \eqref{RE} changes the type to be a degenerate hyperbolic system and the classical theory of Friedrich-Lax-Kato for quasilinear strictly hyperbolic system can not be applied to prove the short time existence of classical solutions. To overcome this difficulties, the first step is to establish the a-priori estimates of classical solutions (supposed to exist) to the free boundary value problem  for the 3D relativistic system \eqref{RE}, which has been made recently by Jang, Lefloch and Masmoudi \cite{JM} in the framework of \cite{Jang1,Jang2} and by Hadzic, Shkoller and Speck \cite{HSJ} in the framework of \cite{D-H-S,Coutand1,Coutand2} respectively. Yet, due to the strong nonlinearity caused by the Lorentz effect $\widetilde{\Theta},$ the existence of short time classical solution to the free boundary problem for the relativistic system \eqref{RE} does not seem to be carried out straightforward as those made in \cite{Coutand1,HSJ,JM,Jang2} for compressible Euler equations \eqref{Euler}. In addition, the asymptotical behaviors of the classical solution such as the non-relativistic limits and the corresponding rates are not justified in the presence of free boundary and vacuum. Indeed, as one can see below in section~\ref{reform},  the appearance of Lorentz effect causes the the relativistic Euler equation \eqref{RE} to take the form of quasilinear system of Euler's type with the additional source terms (refer to \eqref{C-8}--\eqref{C-4} and \eqref{cylind-Euler} for instance). These source terms, which are nonlinear functions of the solutions and vanish in the non-relativistic limits, make it difficult to establish the a-priori estimates and prove the existence of classical solution.

In this paper, we study the well-posedness and non-relativistic limit of local smooth cylindrically symmetric solution to the free boundary value problem for the relativistic Euler equation \eqref{RE} as the mass energy density $\widetilde{\rho}$ connects with the vacuum continuously at the free boundary. We first derive the corresponding equations in cylindrical symmetric coordinates, establish the uniformly a-priori estimates of classical solution to the free boundary problem, and then construct the approximate solutions to show the well-posedness of the classical solution to original problem. Based on the uniform estimates independent of the speed of light $c$, we can obtain the non-relativistic limits as $c\rightarrow\infty,$  in particular, we show that this classical solution of the cylindrical symmetric relativistic Euler equations \eqref{RE} converges to the solution of the classical compressible Euler equation in  the  $C^{0}-$ norm at the rate  $\frac{1}{c^{2}}$ (refer to Theorem~\ref{THM-1} for details).
\par  We briefly state the main difficulties . As mentioned above, the Lorentz factor $\widetilde{\Theta}$ makes the essential difference for the structure of Euler equations. We translate the cylindrical relativistic Euler equation of \eqref{RE} in Euler coordinates into a quasilinear Euler equation with source terms in Lagrangian coordinates.  The part of quasiliear Euler equation converges to the classical cylindrical Euler equation in Lagrangian coordinates and the additional source terms vanish as $c\rightarrow\infty.$ Especially, the source terms involving the main equations of the angular component  and the axial component of velocity are equivalent to the first derivative of the pressure $p(\widetilde{\rho})$ with respect to special variables, which is hard to control for constructing the higher order energy estimates because the degeneracy of the mass energy density $\widetilde{\rho}$ at the boundary. We have to use the special structure of system \eqref{RE} to handle these terms. On the other hand, the strong nonlinear structure of  coefficients (see \eqref{C-24}-\eqref{C-26}) of quasilinear Euler system  makes the a-priori estimates more complicated and tedious, which is also caused by the Lorentz factor $\widetilde{\Theta}$.  For convenience, we summarize the some estimates of coefficients in a lemma (see Lemma \ref{lemma2.1}) in order to simplify our energy estimates.
In the limit system \eqref{Euler}, at the original point $0$ the degeneracy rate $x^{1/2}$ (fractional order) of  the cylindrical symmetric system makes the estimates for the higher order derivatives more complicated than the spherically symmetric system (analyzed in \cite{LuoT}), where the rate is $x.$

\par This paper is arranged as follows. In section \ref{reform}, we describe the our problem and state main results in Lagrangian coordinates. In section \ref{sec-2}, we make some a priori assumptions and computations for the case $\gamma=2,$ which are very important to construct the a priori estimates of solutions. In section \ref{sec-3} and section \ref{sec-4}, we mainly construct the uniformly a-priori estimates of local smooth solutions independent of the speed of light $c$ for large enough $c$ and suitably small $T.$ The energy estimates for the higher order time derivatives are obtained in section \ref{sec-3} and the elliptic type estimates(including the estimates near the original point $x=0$ and the boundary point $x=1$) are established in section \ref{sec-4}.
In section \ref{sec-5}, we prove the existence results by a particular degenerate parabolic regularization to the relativistic Euler system \eqref{C-24}.
 In section \ref{sec-6} and section \ref{sec-7},  we consider the uniqueness and the non-relativistic limits of solution obtained in section \ref{sec-5}, respectively. Finally, we generalize our results to the case $\gamma\neq2$ in section \ref{sec-8}.
 \par \noindent\textbf{~~~Notation and Weighted Sobolev Spaces.}  Let $H^{k}(0,1)$ denote the usual Sobolev spaces with the norm $\|\cdot\|_{k},$ especially ,
$\|\cdot\|_{0}=\|\cdot\|_{L^{2}(0,1)}.$
For real number $l,$ the Sobolev spaces $H^{l}(0,1)$ and the norm $\|\cdot\|_{l}$
 are defined by interpolation. The function space $L^{\infty}(0,1)$ is simplified by $L^{\infty}.$ The notation $C$  denotes the generic positive constants  depending on the (renormalized) light speed $c$  and the notation $M_{0}$ denotes the generic constants independent of $c$, respectively.
 \par Let $d(x)$ be distance function to boundary $\Gamma=\{0,1\}$~as
$d(x)=dist(x,\Gamma)=\min\{x,1-x\} ~\text{for}~x\in \Gamma.$
  For any $a>0$ and nonnegative $b,$ the weighted Sobolev space $H^{a,b}$ is given by
  $H^{a,b}:=\{d^{\frac{a}{2}}:\int^{1}_{0}d^{a}|D^{k}F|^{2}dx<\infty,~0\leq k\leq b\}$
   with the norm
  $\|F\|^{2}_{H^{a,b}}:=\sum^{b}_{k=0}\int^{1}_{0}d^{a}|D^{k}F|dx.$
Then, it holds the following embedding:
$H^{a,b}(0,1)\hookrightarrow H^{b-a/2}(0,1),$
with the estimate
$\|F\|_{b-a/2}\leq C_{0}\|F\|_{H^{a,b}}.\label{WS1}$
In particular, we have
\begin{align}\|F\|^{2}_{0}&\leq C_{0}\int^{1}_{0}d(x)^{2}\left(|F(x)|^{2}+|\partial_{x}F(x)|^{2}\right)dx,\label{WS2}\\
\|F\|^{2}_{1/2}&\leq C_{0}\int^{1}_{0}d(x)\left(|F(x)|^{2}+|\partial_{x}F(x)|^{2}\right)dx.\label{WS3}
\end{align}

\section{Reformulation and main results}\label{reform}
   In this section, we first reformulate the solution from original coordinate \eqref{RE} into cylindrical symmetric form, and derive an expended quasilinear cylindrical Euler equation with source terms parameterized by the $\lambda=\frac{1}{c}.$ Then, we introduce the corresponding Lagrangian coordinate transformation to obtain the cylindrical symmetric Euler Equation in Lagrangian representation and finally we state the main results.

\par  Define the cylindrical symmetric transformation:
\begin{equation}\label{cylind}
\left\{\begin{aligned}
&{\mathbf{v}}=\left(\widetilde{u}\frac{x_{1}}{r}-\widetilde{\upsilon}\frac{x_{2}}{r},\widetilde{u}\frac{x_{2}}{r}+\widetilde{\upsilon}\frac{x_{1}}{r},\widetilde{\omega}\right),
  ~r=\sqrt{x^{2}_{1}+x^{2}_{2}}, \\
&\widetilde{u}=\widetilde{u}(r,t),~\widetilde{\upsilon}=\widetilde{\upsilon}(r,t),~\widetilde{\omega}=\widetilde{\omega}(r,t), \quad  t>0,
\end{aligned}
\right.
\end{equation}
where the scalar functions $\widetilde{u},~\widetilde{\upsilon}$ and $\widetilde{\omega}$ represent the radial component, the angular component and the axial component of the velocity ${\mathbf{v}},$ respectively.
By \eqref{cylind}, we are able to obtain the cylindrical symmetric form for \eqref{RE} after a tedious computation as
\begin{align}
&\partial_{t}\left(\frac{\widetilde{n}}{\widetilde{\Theta}}\right)+\partial_{r}
\left(\frac{\widetilde{n}\widetilde{u}}{\widetilde{\Theta}}\right)+\frac{1}{r}\frac{\widetilde{n}\widetilde{u}}{\widetilde{\Theta}}=0,\label{cylind-R21}\\
&~~~~~~{\mathbb M}{U}={F},\label{cylind-R24}
\end{align}
where the corresponding flow motion variable ${U}$, the quasilinear matrix $\mathbb{M}$ and the source term ${F}$ are defined by
\begin{equation*}
{U}=\left(\begin{array}{cccc}\widetilde{u}_{t}+\widetilde{u}\widetilde{u}_{r}-\frac{\widetilde{\upsilon}^{2}}{r}\\ \widetilde{\upsilon}_{t}+\widetilde{u}\widetilde{\upsilon}_{r}+\frac{\widetilde{u}\widetilde{\upsilon}}{r}\\ \widetilde{\omega}_{t}+\widetilde{u}\widetilde{\omega}_{r}\end{array}\right),
\end{equation*}
\begin{equation*}
{\mathbb M}=\frac{(\widetilde{\rho} c^{2}+p(\widetilde{\rho}))}{c^{2}\widetilde{\Theta}^{2}}
\left(\begin{array}{cccc}
1+\frac{(1-p'(\widetilde{\rho})/c^{2})\widetilde{u}^{2}}{c^{2}\widetilde{\Theta}^{2}}
&-\frac{1-p'(\widetilde{\rho})/c^{2}}{c^{2}\widetilde{\Theta}^{2}}\widetilde{u}\widetilde{\upsilon}&\frac{1-p'(\widetilde{\rho})/c^{2}}{c^{2}\widetilde{\Theta}^{2}}\widetilde{u}\widetilde{\omega}
\\ \frac{1-p'(\widetilde{\rho})/c^{2}}{c^{2}\widetilde{\Theta}^{2}}\widetilde{u}\widetilde{\upsilon}&1-\frac{(1-p'(\widetilde{\rho})/c^{2})\widetilde{\upsilon}^{2}}{c^{2}\widetilde{\Theta}^{2}}
&\frac{1-p'(\widetilde{\rho})/c^{2}}{c^{2}\widetilde{\Theta}^{2}}\widetilde{\upsilon}\widetilde{\omega}\\\frac{1-p'(\widetilde{\rho})/c^{2}}{c^{2}\widetilde{\Theta}^{2}}\widetilde{u}\widetilde{\omega}
&-\frac{1-p'(\widetilde{\rho})/c^{2}}{c^{2}\widetilde{\Theta}^{2}}\widetilde{\upsilon}\widetilde{\omega}&1+\frac{(1-p'(\widetilde{\rho})/c^{2})\widetilde{\omega}^{2}}{c^{2}\widetilde{\Theta}^{2}}
\end{array}\right),
\end{equation*}
\begin{equation*}
{F}=\frac{(\widetilde{\rho} c^{2}+p(\widetilde{\rho}))}{c^{2}\widetilde{\Theta}^{2}}\left(\begin{array}{cccc}\frac{p'(\widetilde{\rho})}{c^{2}}(\widetilde{u}_{r}+\frac{\widetilde{u}}{r})\widetilde{u}-\frac{c^{2}\widetilde{\Theta}^{2}}{(\widetilde{\rho} c^{2}+p(\widetilde{\rho}))}p_{r}(\widetilde{\rho})\\ \frac{p'(\widetilde{\rho})}{c^{2}}(\widetilde{u}_{r}+\frac{\widetilde{u}}{r})\widetilde{\upsilon}\\\frac{p'(\widetilde{\rho})}{c^{2}}(\widetilde{u}_{r}+\frac{\widetilde{u}}{r})\widetilde{\omega}\end{array}\right).
\end{equation*}
The  cylindrical symmetric system \eqref{cylind-R21} and \eqref{cylind-R24} is supplemented with the following free boundary condition and initial data for $(0,R(t))\times[0,T]:$
 \begin{equation}\label{1.2}
\left\{\begin{aligned}
&\widetilde{\rho}>0,~\text{in}~ [0,R(t)),\\
&\rho(R(t),t)=0,\widetilde{u}(0,t)=0,\\
&\frac{dR(t)}{dt}=\widetilde{u}(R(t),t), ~R(0)=1,\\
&(\widetilde{\rho},\widetilde{u},\widetilde{\upsilon},\widetilde{\omega})(x,0)=(\rho_{0},u_{0},\upsilon_{0},\omega_{0}),~\rho_{0}(r)>0 ~\text{in}~ [0,1),\\
&-\infty<\frac{\partial}{\partial r}p'(\rho_{0})<0,~ \text{on}~r=1,
\end{aligned}
\right.
\end{equation}
where  $\eqref{1.2}_{5}$ is called the physical vacuum condition (\cite{Coutand1,Jang1}) , which confirms that $\rho_{0}$
is equivalent to the distance function $d(x)$ of  the  boundary near $x=1,$  and also is very important to obtain the regularities of higher order spatial derivatives of velocity.

\par
In special relativity, the light speed $c>0$ is the maximal speed.  Therefore, we denote ${\mathbf{v}}_{0}=(u_{0},\upsilon_{0},\omega_{0})$ and assume that
\begin{equation}12\|{\mathbf{v}}_{0}\|^{2}_{L^{\infty}(0,1)}<c^{2},\label{A2}\end{equation}
 which implies
\begin{equation}\Theta^{2}_{0}\geq\frac{11}{12}\label{P},\end{equation}
where $\Theta_{0}$ satisfies
\begin{equation}~\Theta_{0}=\sqrt{1-|{\mathbf{v}}_{0}|^{2}/c^{2}}.\label{T}\end{equation}
Similarly, the sound speed
$\sqrt{p'(\rho_{0})}$ should satisfy
\begin{equation}\sqrt{p'(\rho_{0})}<c.\label{L}\end{equation}

\par To simplify the equation \eqref{cylind-R24}, we define
\begin{align}
&\widetilde{\Lambda}_{1}:=1+\frac{(1-p'(\widetilde{\rho})/c^{2})\widetilde{u}^{2}}{c^{2}\widetilde{\Theta}^{2}},\quad
~\widetilde{\Lambda}_{2}:=1-\frac{(1-p'(\widetilde{\rho})/c^{2})\widetilde{\upsilon}^{2}}{c^{2}\widetilde{\Theta}^{2}},\label{C-3}\\
&~\widetilde{\Lambda}_{3}:=1+\frac{(1-p'(\widetilde{\rho})/c^{2})\widetilde{\omega}^{2}}{c^{2}\widetilde{\Theta}^{2}}, \quad
\widetilde{A}_{0}:=\frac{1}{\widetilde{\Lambda}_{2}\widetilde{\Lambda}_{3}+\frac{1}{c^{4}\widetilde{\Theta}^{4}}(1-p'(\widetilde{\rho})/c^{2})\widetilde{\omega}^{2}\widetilde{\upsilon}^{2}}.\label{C-5}
\end{align}
For any smooth solution $(\widetilde{\rho},\widetilde{u},\widetilde{\upsilon},\widetilde{\omega})$ to \eqref{cylind-R21}--\eqref{cylind-R24}  satisfying 
 \begin{equation}
 \widetilde{\Lambda}_{i}>0, i=1,2,3,            \label{positive1}
 \end{equation}
we define the positive matrix ${\bm Q}$ by

\begin{equation}
{\mathbb Q}=\left(\begin{array}{cccc}
1&\widetilde{A}_{0}\frac{1-p'(\widetilde{\rho})/c^{2}}{c^{2}\widetilde{\Theta}^{2}}\widetilde{u}\widetilde{\upsilon}
&-\widetilde{A}_{0}\frac{1-p'(\widetilde{\rho})/c^{2}}{c^{2}\widetilde{\Theta}^{2}}\widetilde{u}\widetilde{\omega}\\
0&\widetilde{A}_{0}\Lambda_{3}&-\widetilde{A}_{0}\frac{1-p'(\widetilde{\rho})/c^{2}}{c^{2}\widetilde{\Theta}^{2}}\widetilde{\upsilon}\widetilde{\omega}\\
0&\widetilde{A}_{0}\frac{1-p'(\widetilde{\rho})/c^{2}}{c^{2}\widetilde{\Theta}^{2}}\widetilde{\upsilon}\widetilde{\omega}&\widetilde{A}_{0}\Lambda_{2}
\end{array}\right),
\end{equation}
and  multiply \eqref{cylind-R24} by the matrix ${\mathbb M}$ on the left to obtain the equations
$
{\mathbb Q}{\mathbb M}U={\mathbb Q} F,
$
which can be written as
\begin{align}
 &\widetilde{a}_{11}\frac{(\widetilde{\rho} c^{2}+p(\widetilde{\rho}))}{c^{2}\widetilde{\Theta}^{2}}
   \left(\widetilde{u}_{t}+\widetilde{u}\widetilde{u_{r}}-\frac{\widetilde{\upsilon}^{2}}{r}\right)
   -\widetilde{a}_{12}\frac{p'(\widetilde{\rho})(\widetilde{\rho} c^{2}+p(\widetilde{\rho}))}{c^{4}\widetilde{\Theta}^{2}}(\widetilde{u}_{r}+\frac{\widetilde{u}}{r})\widetilde{u}+p(\widetilde{\rho})_{r}=0,\label{C-8}
   \\
&\widetilde{\upsilon}_{t}+\widetilde{u}\widetilde{\upsilon}_{r}+\frac{\widetilde{u}\widetilde{\upsilon}}{r}
 -\frac{\widetilde{b}_{11}}{c^{2}}(\widetilde{u}_{r}+\frac{\widetilde{u}}{r})\widetilde{\upsilon}
 +\frac{\widetilde{b}_{12}}{c^{2}}\left(\widetilde{u}_{t}+\widetilde{u}\widetilde{u}_{r}-\frac{\widetilde{\upsilon}^{2}}{r}\right)\widetilde{u}\widetilde{\upsilon}=0,\label{C-6}
 \\
&\widetilde{\omega}_{t}+\widetilde{u}\widetilde{\omega}_{r}-\frac{\widetilde{b}_{11}}{c^{2}}(\widetilde{u}_{r}+\frac{\widetilde{u}}{r})\widetilde{\omega}
 -\frac{\widetilde{b}_{12}}{c^{2}}\left(\widetilde{u}_{t}+\widetilde{u}\widetilde{u}_{r}-\frac{\widetilde{\upsilon}^{2}}{r}\right)\widetilde{u}\widetilde{\omega}=0,\label{C-4}
\end{align}
where
\begin{align}
&\widetilde{a}_{11}(\widetilde{u}^{2},\widetilde{\upsilon}^{2},\widetilde{\omega}^{2},r_{x},\frac{x}{r},\widetilde{\rho})
  :=\widetilde{\Lambda}_{1}-\frac{(1-p'(\widetilde{\rho})/c^{2})^{2}}{c^{4}\widetilde{\Theta}^{4}}\widetilde{A}_{0}\widetilde{\omega}^{2}\widetilde{u}^{2}
     +\frac{(1-p'(\widetilde{\rho})/c^{2})^{2}}{c^{4}\widetilde{\Theta}^{4}}\widetilde{A}_{0}\widetilde{\upsilon}^{2}\widetilde{u}^{2},\label{C-9}\\
&\widetilde{a}_{12}(\widetilde{u}^{2},\widetilde{\upsilon}^{2},\widetilde{\omega}^{2},r_{x},\frac{x}{r},\widetilde{\rho})
  :=1+\frac{(1-p'(\widetilde{\rho})/c^{2})}{c^{2}\widetilde{\Theta}^{2}}\widetilde{A}_{0}\left(\widetilde{\upsilon}^{2}-\widetilde{\omega}^{2}\right)\label{C-10},\\
&\widetilde{b}_{11}(\widetilde{u}^{2},\widetilde{\upsilon}^{2},\widetilde{\omega}^{2},r_{x},\frac{x}{r},\widetilde{\rho}):=\widetilde{A}_{0}p'(\widetilde{\rho}),
 ~\widetilde{b}_{12}(\widetilde{u}^{2},\widetilde{\upsilon}^{2},\widetilde{\omega}^{2},r_{x},\frac{x}{r},\widetilde{\rho})
  :=\widetilde{A}_{0}\frac{1-p'(\widetilde{\rho})/c^{2}}{\widetilde{\Theta}^{2}}, \label{B-101}
\end{align}
where $\Lambda_{i}>0$ for $i=1,2,3,$  are defined by \eqref{C-3}--\eqref{C-5} and satisfy \eqref{positive1}.

\begin{remark}
By the coordinate transform \eqref{cylind} we can also derive the  cylindrical symmetric form for the  Euler equation\eqref{Euler} as
\begin{equation}\label{cylind-Euler}
\left\{\begin{aligned}
&\widetilde{\rho}_{t}+(\widetilde{\rho} \widetilde{u})_{r}+\frac{\widetilde{\rho} \widetilde{u}}{r}=0,\\
&\widetilde{\rho}(\widetilde{u}_{t}+\widetilde{u}\widetilde{u}_{r}-\frac{\widetilde{\upsilon}^{2}}{r})+p_{r}(\widetilde{\rho})=0,\\
&\widetilde{\upsilon}_{t}+\widetilde{u}\widetilde{\upsilon}_{r}+\frac{\widetilde{u}\widetilde{\upsilon}}{r}=0,\\
&\widetilde{\omega}_{t}+\widetilde{u}\widetilde{\omega}=0.
\end{aligned}
\right.
\end{equation}
To compare the different structures of Eq.~\eqref{cylind-Euler} and Eq.~\eqref{cylind-R21} and \eqref{C-8}--\eqref{C-4}, it is obvious that due to the  relativistic effect, the equations \eqref{C-8}--\eqref{C-4} are quasilinear and the  pressure gradient term is also involved in \eqref{C-6}--\eqref{C-4} and affects the flow motion not only in the radial but also in the angular and  axial direction. Although these influences shall vanish in the non-relativistic limit, yet they cause essential difficulties to deal with the existence of smooth solution to the free boundary value problem for Eq.~\eqref{cylind-R21} and \eqref{C-8}--\eqref{C-4}.
\end{remark}

\par We define  the Lagrangian variables $\eta(x,t)$  in order to transform the region $(0,R(t))$ into $(0,1)$
as
\begin{equation}\partial_{t}r(x,t)=u(r(x,t),t),~\text{for}~t>0,~\text{and}~r(x,0)=x,~x\in(0,1).\label{PP}\end{equation}
Denoting
$u(x,t):=\widetilde{u}(r(x,t),t),\upsilon(x,t):=\widetilde{\upsilon}(r(x,t),t),~\omega(x,t):=\widetilde{\omega}(r(x,t),t),$
$\rho(x,t):=\widetilde{\rho}(r(x,t),t),~n(x,t):=\widetilde{n}(r(x,t),t),$
 the baryon numbers conservation equation  \eqref{cylind-R21} is equivalent to
\begin{equation}\frac{n}{\Theta}r_{x}r=\frac{n(\rho_{0})}{\Theta_{0}}x=\frac{\rho_{0}}{(1+\frac{\rho^{\gamma-1}_{0}}{c^{2}})^{\frac{1}{\gamma-1}}\Theta_{0}}x,\label{C-15}\end{equation}
where
$
\Theta
=\sqrt{1-(u^{2}+\upsilon^{2}+\omega^{2})/c^{2}}.
$
By \eqref{C-15}, it holds
\begin{equation}
n=\frac{\rho_{0}}{(1+\frac{\rho^{\gamma-1}_{0}}{c^{2}})^{\frac{1}{\gamma-1}}\Theta_{0}}\frac{1}{r_{x}}\frac{x}{r}\Theta,\label{C-17}
\end{equation}
which together with \eqref{1.1} shows
\begin{equation}
\rho=\frac{\rho_{0}}{(1+\frac{\rho^{\gamma-1}_{0}}{c^{2}})^{\frac{1}{\gamma-1}}\Theta_{0}}\frac{1}{r_{x}}\frac{x}{r}\Theta
\left(1-\frac{1}{c^{2}}\frac{\rho^{\gamma-1}_{0}}{(1+\frac{\rho^{\gamma-1}_{0}}{c^{2}})\Theta^{\gamma-1}_{0}}\frac{1}{r^{\gamma-1}_{x}}(\frac{x}{r})^{\gamma-1}\Theta^{\gamma-1}\right)^{\frac{1}{1-\gamma}}.\label{C-18}\end{equation}
Then, it follows from  \eqref{C-8}-\eqref{C-4} that
\begin{align}
&a^{\gamma}_{11}\frac{x}{r}\rho_{0}(u_{t}-\frac{\upsilon^{2}}{r})\nnm\\
&+\left[\frac{\rho^{\gamma}_{0}}{(1+\frac{\rho^{\gamma-1}_{0}}{c^{2}})^{\frac{\gamma}{\gamma-1}}\Theta^{\gamma}_{0}}
\frac{1}{r^{\gamma}_{x}}(\frac{x}{r})^{\gamma}\Theta^{\gamma}
\left(1-\frac{1}{c^{2}}\frac{\rho^{\gamma}_{0}}{(1+\frac{\rho^{\gamma-1}_{0}}{c^{2}})^{\frac{\gamma}{\gamma-1}}\Theta^{\gamma}_{0}}\frac{1}{r^{\gamma-1}_{x}}(\frac{x}{r})^{\gamma-1}\Theta^{\gamma-1}\right)^{\frac{\gamma}{1-\gamma}}\right]_{x}\nnm\\
&~~~~~~~~~~~~~~~~~~~~~~~~~~~~~~~~~~~~~~~~~~+\frac{a^{\gamma}_{12}}{c^{2}}\frac{\rho^{\gamma}_{0}}{(1+\frac{\rho^{\gamma-1}_{0}}{c^{2}})^{\frac{\gamma}{\gamma-1}}\Theta^{\gamma}_{0}}\frac{1}{r^{\gamma}_{x}}(\frac{x}{r})^{\gamma}(u_{x}+\frac{r_{x}}{r}u)u=0,\label{GB1}\\
&\upsilon_{t}+\frac{u}{r}\upsilon-\frac{b^{\gamma}_{11}}{c^{2}}\frac{\rho^{\gamma-1}_{0}}{(1+\frac{\rho^{\gamma-1}_{0}}{c^{2}})\Theta^{\gamma-1}_{0}}(u_{x}+\frac{u}{r}r_{x})\upsilon+\frac{b^{\gamma}_{12}}{c^{2}}(u_{t}-\frac{\upsilon^{2}}{r})
u~\upsilon=0,\label{C-250}\\
&\omega_{t}-\frac{b^{\gamma}_{11}}{c^{2}}\frac{\rho^{\gamma-1}_{0}}{(1+\frac{\rho^{\gamma-1}_{0}}{c^{2}})\Theta^{\gamma-1}_{0}}(u_{x}+\frac{u}{r}r_{x})\omega+\frac{b^{\gamma}_{12}}{c^{2}}(u_{t}-\frac{\upsilon^{2}}{r})
u~\omega=0,\label{C-260}
\end{align}
where
\begin{align}
&a^{\gamma}_{11}:=a_{11}\frac{\left(1-\frac{1}{c^{2}}\frac{\rho^{\gamma-1}_{0}}{(1+\frac{\rho^{\gamma-1}_{0}}{c^{2}})\Theta^{\gamma-1}_{0}}\frac{1}{r^{\gamma-1}_{x}}(\frac{x}{r})^{\gamma-1}\Theta^{\gamma-1}\right)^{\frac{1}{1-\gamma}}(1+\frac{\rho^{\gamma-1}}{c^{2}})}{\Theta},\label{CCC0}\\
&a^{\gamma}_{12}:=\gamma a_{12}\Theta^{\gamma-2}\left(1-\frac{1}{c^{2}}\frac{\rho^{\gamma-1}_{0}}{(1+\frac{\rho^{\gamma-1}_{0}}{c^{2}})\Theta^{\gamma-1}_{0}}\frac{1}{r^{\gamma-1}_{x}}(\frac{x}{r})^{\gamma-1}\Theta^{\gamma-1}\right)^{\frac{\gamma}{1-\gamma}}
(1+\frac{\rho^{\gamma-1}}{c^{2}}),\nnm\\
&b^{\gamma}_{11}:=\gamma A_{0}(\frac{x}{r}\frac{1}{r_{x}}\Theta)^{\gamma-1} \left(1-\frac{1}{c^{2}}\frac{\rho^{\gamma-1}_{0}}{(1+\frac{\rho^{\gamma-1}_{0}}{c^{2}})\Theta^{\gamma-1}_{0}}\frac{1}{r^{\gamma-1}_{x}}(\frac{x}{r})^{\gamma-1}\Theta^{\gamma-1}\right)^{-1},\nnm\\
&b^{\gamma}_{12}:=A_{0}\frac{1-\gamma\frac{\rho^{\gamma-1}}{c^{2}}}{\Theta}.\label{CCC1}
\end{align}
with $\rho$ given by \eqref{C-18}.
\par Corresponding to the system \eqref{GB1}-\eqref{C-260} in $(0,1)$, the conditions \eqref{1.2} become
\begin{equation}\label{L1.2g}
\left\{\begin{aligned}
&\rho_{0}(x)>0,~x ~\text{in}~ [0,1),~\rho_{0}(1)=0,\\
&-\infty<\frac{\partial}{\partial x}\rho_{0}^{\gamma-1}(1)<0,\\
&u(0,t)=0,~\text{on}~\{x=0\}\times(0,T],\\
&(u,\upsilon,\omega)(x,0)=(u_{0},\upsilon_{0},\omega_{0})~\text{in}~ (0,1),\\
\end{aligned}
\right.
\end{equation}

In this paper  we mainly analyze the case of $\gamma=2.$ For convenience, we denote
$a^{2}_{1j}=a_{1j},~b^{2}_{1j}=b_{1j}(j=1,2),$~$\alpha_{c}(x)=\frac{\rho_{0}}{(1+\frac{\rho^{\gamma-1}_{0}}{c^{2}})^{\frac{1}{\gamma-1}}\Theta_{0}}x
$ and multiply \eqref{GB1} by $r$ to obtain
\begin{align}
&a_{11}\alpha_{c}(x)(u_{t}-\frac{\upsilon^{2}}{r})
+\left(\frac{\alpha^{2}_{c}(x)}{x}\frac{x\Theta^{2}}{rr^{2}_{x}(1-\frac{1}{c^{2}}\frac{\rho_{0}}{r_{x}}\frac{x}{r}\Theta)^{2}}\right)_{x}\nnm\\
&-\frac{\alpha^{2}_{c}(x)}{x^{2}}\frac{x^{2}\Theta^{2}}{r^{2}r_{x}(1-\frac{1}{c^{2}}\frac{\rho_{0}}{r_{x}}\frac{x}{r}\Theta)^{2}}+\frac{xa_{12}}{c^{2}rr^{2}_{x}}\frac{\alpha^{2}_{c}(x)}{x}
(u_{x}+\frac{u}{r}r_{x})u=0,\label{C-24}
\\
&\upsilon_{t}+\frac{u}{r}\upsilon-\frac{b_{11}}{c^{2}}\frac{\alpha_{c}(x)}{x}(u_{x}+\frac{u}{r}r_{x})\upsilon+\frac{b_{12}}{c^{2}}(u_{t}-\frac{\upsilon^{2}}{r})
u~\upsilon=0,\label{C-25}
\\
&\omega_{t}-\frac{b_{11}}{c^{2}}\frac{\alpha_{c}(x)}{x}(u_{x}+\frac{u}{r}r_{x})\omega+\frac{b_{12}}{c^{2}}(u_{t}-\frac{\upsilon^{2}}{r})
u~\omega=0,\label{C-26}
\end{align}
and the conditions \eqref{L1.2g} become
\begin{equation}\label{L1.2}
\left\{\begin{aligned}
&\rho_{0}(x)>0,~x ~\text{in}~ [0,1),~\rho_{0}(1)=0,\\
&-\infty<\frac{\partial}{\partial x}\rho_{0}(1)<0,\\
&u(0,t)=0,~\text{on}~\{x=0\}\times(0,T],\\
&(u,\upsilon,\omega)(x,0)=(u_{0},\upsilon_{0},\omega_{0})~\text{in}~ (0,1).\\
\end{aligned}
\right.
\end{equation}
Formally, as $c\rightarrow \infty,$ we obtain the cylindrical symmetric compressible Euler equations \eqref{cylind-Euler} in Lagrangian coordinates:
\begin{equation}\label{LCE}
\left\{\begin{aligned}
&\alpha_{0}(x)(u_{t}-\frac{\upsilon^{2}}{r})
+\left(\frac{\alpha^{2}_{0}(x)}{x}\frac{x}{rr^{2}_{x}}\right)_{x}-\frac{\alpha^{2}_{0}(x)}{x^{2}}\frac{x^{2}}{r^{2}r_{x}}=0,\\
&\upsilon_{t}+\frac{u}{r}\upsilon=0,\\
&\omega_{t}=0,\\
\end{aligned}
\right.
\end{equation}
where $\alpha_{0}(x)=\rho_{0}(x)x,$ which satisfies  that  $\alpha_{c}(x)\rightarrow \alpha_{0}(x) ~\text{as}~ c\rightarrow\infty,$ This system is the Lagrangian form of the  cylindrical symmetric Euler equation \eqref{cylind-Euler} with $\gamma=2$.

Due to different singularities at the original point $x=0$ and the boundary point $x=1$, we introduce the interior and  the boundary $C^{\infty}$ cut-off functions $\xi(x), \chi(x)$as:
\begin{align} &\xi(x)=1 ~\text{on}~ [0,\delta], ~\xi(x)=0~~ \text{on}~~ [2\delta,1],~|\xi'(x)|\leq\frac{C_{0}}{\delta},\label{cut-1}\\
&\chi(x)=1 ~\text{on} ~[\delta,1], ~\chi(x)=0~ \text{on}~[0,\frac{\delta}{2}],~|\chi'(x)|\leq\frac{C_{0}}{\delta},\label{cut-2}\end{align}
where $C_{0}$ ~and $\delta$ are positive constant and ~$\delta$ ~will be determined later.

Define the energy functional $E(t)$ for the classical solution  $(r,u,\upsilon,\omega)$ as
\begin{equation}E(t):=E(u)+E(\upsilon)+E(\omega)\nnm\end{equation}
with
\begin{align}
E(u):=&\|\frac{\alpha_{0}(x)}{\sqrt{x}}\partial^{4}_{t}\partial_{x}u(t)\|^{2}_{0}+\|\frac{\alpha_{0}(x)}{\sqrt{x}}\frac{\partial^{4}_{t}u}{x}(t)\|^{2}_{0}
+\|\partial^{4}_{x}u(t)\|^{2}_{0}+\|\alpha_{0}(x)u(t)\|^{2}_{3}+\|u(t)\|^{2}_{2}\nnm\\
&+\|\frac{u}{x}(t)\|^{2}_{1}+\|\frac{\alpha_{0}(x)}{\sqrt{x}}\partial^{2}_{t}u(t)\|^{2}_{2}+\|\partial^{2}_{t}u(t)\|^{2}_{1}+\|\frac{\partial^{2}_{t}\partial_{x}u}{\sqrt{x}}(t)\|^{2}_{0}+\|\frac{\partial^{2}_{t}u}{x\sqrt{x}}
(t)\|^{2}_{0}
\nnm\\&+\sum^{1}_{s=0}\left(\|\partial^{2s+1}_{t}u(t)\|^{2}_{\frac{3}{2}-s}
 +\|\frac{\partial^{2s+1}_{t}u}{x}(t)\|^{2}_{1-s}
+\|\sqrt{\alpha_{0}(x)}\partial^{2s+1}_{t}\partial^{2-s}_{x}u(t)\|^{2}_{0}\right)\nnm\\
&+\sum^{1}_{s=0}\left(\|(\frac{\alpha^{3}_{0}(x)}{x})^{\frac{1}{2}}\partial^{2s+1}_{t}\partial^{3-s}_{x}u(t)\|^{2}_{0}
+\|\xi\alpha_{0}(x)\partial^{2s+1}_{t}u(t)\|^{2}_{3-s}
 +\|\xi\partial^{2s+1}_{t}u(t)\|^{2}_{1-s}\right),
\nnm\\ 
E(\upsilon):=&\|\frac{\alpha_{0}(x)}{\sqrt{x}}\partial^{4}_{t}\partial_{x}\upsilon(t)\|^{2}_{0}
+\|\partial^{4}_{t}\upsilon(t)\|^{2}_{0}+\|\partial^{3}_{t}\upsilon(t)\|^{2}_{L^{4}}
+\|\frac{\alpha_{0}(x)}{\sqrt{x}}\partial^{2}_{t}\partial^{2}_{x}\upsilon(\cdot,t)\|^{2}_{0}\nnm\\
&+\|(\partial_{t} \upsilon,\partial^{2}_{t}\upsilon,\frac{\partial_{t}\upsilon}{x},
\frac{\partial^{2}_{t}\upsilon}{x},\frac{\alpha_{0}(x)}{x}\partial_{t}\partial_{x}\upsilon,
\frac{\alpha_{0}(x)}{\sqrt{x}}\partial^{3}_{t}\upsilon)(t)\|^{2}_{L^{\infty}}
\nnm\\&+\|(\upsilon_{x},\partial_{t}\partial_{x}
\upsilon,\sqrt{\alpha_{0}(x)}\partial^{2}_{t}\partial_{x}\upsilon)(t)\|^{2}_{0},\nnm\\
E(\omega):=&\|\frac{\alpha_{0}(x)}{\sqrt{x}}\partial^{4}_{t}\partial_{x}\omega(t)\|^{2}_{0}
+\|\partial^{4}_{t}\omega(t)\|^{2}_{0}+\|\partial^{3}_{t}\omega(t)\|^{2}_{L^{4}}
+\|\frac{\alpha_{0}(x)}{\sqrt{x}}\partial^{2}_{t}\partial^{2}_{x}\omega(t)\|^{2}_{0}\nnm\\
&+\|(\partial_{t} \omega,\partial^{2}_{t}\omega,\frac{\partial_{t}\omega}{x},
 \frac{\partial^{2}_{t}\omega}{x},\frac{\alpha_{0}(x)}{x}\partial_{t}\partial_{x}\omega,
 \frac{\alpha_{0}(x)}{\sqrt{x}}\partial^{3}_{t}\omega)(t)\|^{2}_{L^{\infty}}
 \nnm\\
 &+\|(\omega_{x},\partial_{t}\partial_{x}
\omega,\sqrt{\alpha_{0}(x)}\partial^{2}_{t}\partial_{x}\omega)(t)\|^{2}_{0},\nnm
\end{align}
where  the following compatibility conditions are also assumed to be satisfied for initial data and boundary values
for $1\leq k \leq5$:
\begin{align}
\partial^{k}_{t}u(x,0):=&\partial^{k-1}_{t}\left[\frac{\upsilon^{2}_{0}}{x}
-\frac{1}{\alpha_{c}(x)}\frac{1}{a_{11}(x,0)}\left(\frac{\alpha^{2}_{c}(x)}{x}\frac{\Theta^{2}_{0}}{(1-\frac{1}{c^{2}}\rho_{0}\Theta_{0})^{2}}\right)_{x}\right]\nnm\\
&+\partial^{k-1}_{t}\left[\frac{\alpha^{2}_{c}(x)}{x^{2}}\frac{\Theta^{2}_{0}}{(1-\frac{1}{c^{2}}\rho_{0}\Theta_{0})^{2}}-\frac{a_{12}(x,0)}{c^{2}}\frac{\alpha^{2}_{c}(x)}{x}(\partial_{x}u_{0}+\frac{u_{0}}{x})u_{0}\right],\label{compat-1}\\
\partial^{k}_{t}\upsilon(x,0):=&\partial^{k-1}_{t}\left[-\frac{u_{0}}{x}\upsilon_{0}+\frac{b_{11}(x,0)}{x^{2}}\frac{\alpha_{c}(x)}{x}(\partial_{x}u_{0}+\frac{u_{0}}{x})\upsilon_{0}\right],\nnm\\
&-\partial^{k-1}_{t}\left[\frac{b_{12}(x,0)}{c^{2}}(u_{t}(x,0)-\frac{\upsilon^{2}_{0}}{x})u_{0}\upsilon_{0}\right],\label{compat-2}\end{align}
\begin{align}\partial^{k}_{t}\omega(x,0):=&\partial^{k-1}_{t}\left[\frac{b_{12}(x,0)}{x^{2}}\frac{\alpha_{c}(x)}{x}(\partial_{x}u_{0}+\frac{u_{0}}{x})\omega_{0}-\frac{b_{22}(x,0)}{c^{2}}(u_{t}(x,0)-\frac{\upsilon^{2}_{0}}{x})u_{0}\omega_{0}\right].\label{compat-3}\end{align}
\par

Without the loss of generality, we denote by $\mathscr{P}_m(f)$ the generic polynomial function of $f$ with the order $m>0$. For simplicity, $\mathscr{P}_{0}=\mathscr{P}_{m}(E(0))~$ for any $m>0.$ We also denote  $r=r^{c},u=u^{c},\upsilon=\upsilon^{c},\omega=\omega^{c}$ in order to describe the non-relativistic limit. \par The main result of this paper for the case of $\gamma=2$ is stated as follows.
\begin{theorem}[$\gamma=2$] \label{THM-1}
Assume the initial data $(\rho_{0},u_{0},\upsilon_{0},\omega_{0})\in C^{2}([0,1])$ satisfy \eqref{A2}-\eqref{L},  \eqref{L1.2}, \eqref{compat-1}-\eqref{compat-3} and
\begin{equation}
E(0) <+\infty.\nnm
\end{equation}
Then, there exist two positive constants $c_{0}$  and $ T_{c_{0}}$ such that for any $c\geq c_{0},$ the free boundary problem \eqref{C-24}--\eqref{L1.2} admits a unique classical solution  $(r^{c},u^{c},\upsilon^{c},\omega^{c})$ in $[0,1]\times[0,T_{c_{0}}]$ satisfying \eqref{C-3}--\eqref{positive1} and
\begin{equation}
\underset{t\in[0,T]}{\sup}{E(t)}\leq 2\mathscr{P}_m(E(0))  \label{DRE}
\end{equation}
for some constant integer $m>0.$~Moreover, there exists a  unique classical solution $(r,u,\upsilon,\omega)$ to the free boundary value problem \eqref{LCE} and \eqref{L1.2} so that it holds
\begin{equation}
\|(u^{c}-u,\upsilon^{c}-\upsilon,\omega^{c}-\omega)\|_{C^{0}}
+\|(r^{c}_{x}-r_{x},r^{c}-r)\|_{C^{0}}\leq \mathcal{O}(c^{-2})\label{rate-non}
\end{equation}
as $c\rightarrow\infty$.
\end{theorem}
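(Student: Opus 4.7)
The plan is to proceed in three stages matching Sections \ref{sec-3}--\ref{sec-7}: (i) establish uniform-in-$c$ a priori estimates for smooth solutions of the free boundary problem \eqref{C-24}--\eqref{L1.2}; (ii) construct actual solutions via a degenerate parabolic regularization and pass to the regularization limit to obtain the uniform bound \eqref{DRE}; and (iii) subtract the relativistic system from the limit system \eqref{LCE} and estimate the difference in terms of $c^{-2}$.

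For stage (i), I would first bound the higher-order time derivatives $\partial_t^k(u,\upsilon,\omega)$, $k=1,\dots,5$, by testing $\partial_t^k$ of \eqref{C-24} against $\partial_t^{k+1}u$ and similarly for \eqref{C-25}--\eqref{C-26}. The positivity of $a_{11}$ together with integration by parts in $x$ generates the weighted dissipation that controls $\|\alpha_0(x)x^{-1/2}\partial_t^k\partial_x u\|_0$. All commutator and error terms involving the Lorentz factor carry an explicit prefactor $1/c^2$, so for $c$ large and $T$ small they are absorbed using the coefficient bounds of Lemma~\ref{lemma2.1}. I then plan to upgrade time regularity to spatial regularity by reading \eqref{C-24} as a degenerate elliptic equation for $u$ whose right-hand side involves already-controlled time derivatives. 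The weighted Sobolev embeddings \eqref{WS2}--\eqref{WS3} access the fractional norms of $E(t)$ near the physical vacuum boundary $x=1$, while the cut-offs $\xi,\chi$ in \eqref{cut-1}--\eqref{cut-2} isolate the cylindrical singularity at $x=0$, where a Hardy-type inequality controls $u/x$, $\partial_t^k u/x$, and $x/r$.

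For stage (ii), Friedrich--Lax--Kato theory does not apply directly because of the vacuum degeneracy. I plan to add a degenerate viscosity of the form $\epsilon\,\alpha_0(x)^{-1}\partial_x(\alpha_0(x)^2 \partial_x u)$ to \eqref{C-24}, solve the resulting system for each $\epsilon>0$ by standard parabolic theory, derive estimates uniform in $\epsilon$ by replaying the scheme of stage (i), and extract a convergent subsequence as $\epsilon\to 0^+$. Uniqueness then follows from an $L^2$ estimate for the difference of two solutions combined with Gronwall's inequality, using the higher regularity already established. For stage (iii), I set $(U,V,W,R)=(u^c-u,\upsilon^c-\upsilon,\omega^c-\omega,r^c-r)$ and subtract \eqref{LCE} from \eqref{C-24}--\eqref{C-26}. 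The forcing splits into (a) principal-part perturbations such as $\alpha_c-\alpha_0=O(c^{-2})$ and the Taylor remainders of the $\Theta$-dependent factors in the pressure term, each $O(c^{-2})$; (b) the explicit $c^{-2}$ sources already present in \eqref{C-24}--\eqref{C-26}; and (c) geometric corrections in $R,R_x$. A weighted $L^2$-energy estimate for $(U,V,W,R)$, Gronwall, the uniform bound \eqref{DRE}, and Sobolev embedding then yield \eqref{rate-non} in the $C^0$-norm.

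The main obstacle I anticipate is the source term $b_{11}\alpha_c(x)(u_x+ur_x/r)/(c^2 x)\cdot\upsilon$ (and its $\omega$ analogue) in \eqref{C-25}--\eqref{C-26}. Although formally $O(c^{-2})$, the factor $(u_x+ur_x/r)$ has the same order of degeneracy as the pressure gradient in \eqref{C-24} divided by the degenerate density $\alpha_c(x)$, so it cannot be absorbed via an $L^\infty$ bound on $u$ alone; the $c^{-2}$ prefactor must cooperate with the top-order weighted spatial estimate of $u$ produced by \eqref{C-24}. This forces the energy functional $E(t)$ to include $u,\upsilon,\omega$ at comparable orders with matched weights, and it is why the system must be handled as a coupled whole rather than as transport equations for $(\upsilon,\omega)$ driven by a momentum equation for $u$.
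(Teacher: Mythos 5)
Your proposal follows essentially the same route as the paper: $\partial_t^k$-energy estimates tested against $\partial_t^{k+1}u$, elliptic-type spatial estimates with the cut-offs $\xi,\chi$ separating $x=0$ from the vacuum boundary, a degenerate parabolic regularization with the higher-order Hardy inequality for existence, an $L^2$-Gronwall argument for uniqueness, and a weighted difference estimate for the $O(c^{-2})$ convergence rate. You also correctly isolate the key obstruction — the $c^{-2}\alpha_c(x)(u_x+ur_x/r)\upsilon/x$ sources in \eqref{C-25}--\eqref{C-26} — and its resolution by exploiting the coupled structure, which is exactly how the paper handles the term $\frac{1}{c^2}\iint \frac{2J}{r_x}\frac{\alpha_c^2(x)}{x}(\upsilon\partial_t^5\upsilon+\omega\partial_t^5\omega)\partial_t^5\partial_x u$ by substituting \eqref{SU-1} and absorbing the resulting $c^{-4}$-weighted quadratic term into the coercive main form.
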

\begin{remark}\label{LFF}(\textbf{Convergence)}
Our energy functional $E(t)$ satisfies  that
\begin{equation}\label{RE}
\left\{\begin{aligned}
&E(u^{c})~ \text{contains}~\|u^{c}\|_{H^{2}}, ~\|\frac{u^{c}}{x}\|_{H^{1}},~\text{and}~\|u^{c}_{t}\|_{H^{3/2}},\\
&E(\upsilon^{c})~\text{contains}~\|(\upsilon^{c}_{t},\upsilon^{c}_{x},\upsilon^{c}_{xt})\|_{L^{\infty}},\\
&E(\omega^{c})~\text{contains}~\|(\omega^{c}_{t},\omega^{c}_{x},\omega^{c}_{xt})\|_{L^{\infty}}.
\end{aligned}
\right.
\end{equation}
Thus, there exist subsequence $(r^{c},u^{c},\upsilon^{c},\omega^{c})$ converges to $(r,u,\upsilon,\omega)$ which satisfies the problem \eqref{LCE} and \eqref{L1.2} in classical sense, due to \eqref{DRE} and the fundamental theorem of calculous. And, the $C^{0}-$norm is enough to describe the convergence rate.
\end{remark}
\begin{remark}For the original free boundary value problem \eqref{cylind-R21}, \eqref{cylind-R24} and \eqref{1.2} in Euler coordinates, our results can give the cylindrical symmetric solution $(\rho,u,\upsilon,\omega)\in W^{1,\infty}\left(\Omega_{T_{c_{0}}}\right)$ with $\Omega_{T_{c_{0}}}=\{(r,t):0\leq r<R(t), 0\leq t\leq T_{c_{0}}\}.$\end{remark}

\section{Preliminary}\label{sec-2}

In this section, we establish some useful estimates on the coefficients to Eq.~\eqref{C-24}--\eqref{C-26} and other related terms \eqref{C-3}--\eqref{positive1} provided that there exists a classical solution $(r,u,\upsilon,\omega)$ to the boundary value problem ~\eqref{C-24}--\eqref{L1.2}  on $[0,1]\times[0,T]$, which  satisfies the a-priori assumptions~\eqref{1.15} below
\begin{equation}
\underset{t\in[0,T]}{\sup}{
 \|(u_{t},u_{x},\partial^{2}_{t}u,
 \frac{\alpha_{0}(x)}{x}\partial_{t}\partial_{x}u,\frac{u}{x},
 \frac{\partial_{t}u}{x})(t)\|_{L^{\infty}}}\leq K\label{1.15}
\end{equation}
for some constant $K>0$  determined later.
\begin{lemma}
\label{PPP1}
Let $T>0$ and $(r,u,\upsilon,\omega)$ be a classical solution to free boundary problem \eqref{C-24}--\eqref{L1.2} satisfying \eqref{1.15} on $[0,1]\times[0,T]$. Then, there exist a small time $0<\overline{T}\leq T,$ the positive constants $\overline{c}~$ and $~C_{*}$(only depending on $\|(\rho_{0},u_{0},\upsilon_{0},\omega_{0})\|_{L^{\infty}})$  such that for any $t\in(0,\overline{T}]$ and $c\geq \overline{c}$ the following estimates hold
\begin{align}&0<C^{-1}_{*}\leq \Theta^{2},\frac{x}{r}, r_{x},1-\frac{1}{c^{2}}\frac{\rho_{0}}{r_{x}}\frac{x}{r}\Theta,1-\frac{2}{c^{2}}\frac{\rho_{0}}{r_{x}}\frac{x}{r}\frac{\Theta}{(1-\frac{1}{c^{2}}\frac{\rho_{0}}{r_{x}}\frac{x}{r}\Theta)},\Lambda_{i}(i=1,2,3)\leq C_{*}.\label{XXX1}\\
&\|(u,\upsilon,\omega)(t)\|_{L^{\infty}}\leq4\|(u_{0},\upsilon_{0},\omega_{0})\|_{L^{\infty}},~\|(\frac{\upsilon}{x},\frac{\omega}{x})(t)\|_{L^{\infty}}\leq 2\|(\frac{\upsilon_{0}}{x},\frac{\omega_{0}}{x})\|_{L^{\infty}},\label{XXX3}\\
&\|(\partial^{i}_{t}\upsilon,\frac{\partial^{i}_{t}\upsilon}{x},\partial^{i}_{t}\omega,\frac{\partial^{i}_{t}\omega}{x})(t)\|_{L^{\infty}}\leq M_{0}(K^{2}+K+1),~i=1,2,\label{XXX4}\\
&\|(\upsilon_{x},\omega_{x},\alpha_{0}(x)r_{xx},\alpha_{0}(x)u_{xx})(t)\|_{L^{\infty}}\leq M_{0}(K^{2}+K+1),\label{BP-6}
\end{align}
where $M_{0}>0$ is a constant depending only on $\overline{c}~$ and  $\|(\rho_{0},u_{0},\upsilon_{0},\omega_{0})\|_{L^{\infty}}$.
\end{lemma}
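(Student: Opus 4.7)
The plan is a continuity/bootstrap argument that combines the shortness of $T$ with the largeness of $c$. The central observation is that all the claimed bounds follow from the a priori hypothesis \eqref{1.15} together with $t$-integration from the initial data, once one controls the Lagrangian geometric quantities $r(x,t)$, $r_x$, and $x/r$. I would organize the proof by first settling the ``background'' bounds in \eqref{XXX1}, then the zeroth-order $L^\infty$ bounds on the velocity components \eqref{XXX3}, and finally the derivative bounds \eqref{XXX4} and \eqref{BP-6}.

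From the flow $r_t=u$ with $r(x,0)=x$ I get $r_x(x,t)=1+\int_0^t u_x(x,s)\,ds$, so \eqref{1.15} yields $|r_x-1|\le KT\le 1/2$ after shrinking $T$. Combined with $u(0,t)=0$, which forces $r(0,t)=0$, this gives $r_x,x/r\in[C_*^{-1},C_*]$ for a constant depending only on $\|(\rho_0,u_0,\upsilon_0,\omega_0)\|_{L^\infty}$. The Lorentz factor $\Theta^2=1-(u^2+\upsilon^2+\omega^2)/c^2$ is $1-O(c^{-2})$ once $(u,\upsilon,\omega)$ are controlled in $L^\infty$ (settled in the next step), so $\Theta^2\ge 11/12$ for $c\ge\overline{c}$ large enough as in \eqref{P}. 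The quantities $1-\frac{1}{c^2}\frac{\rho_0}{r_x}\frac{x}{r}\Theta$ and the analogous $2/c^2$-expression are $1-O(c^{-2})$ and hence bounded below by $1/2$; the $\widetilde{\Lambda}_i$ from \eqref{C-3}--\eqref{C-5} are $1+O(c^{-2})$ and lie in $[1/2,3/2]$.

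Next I prove \eqref{XXX3}--\eqref{XXX4}. Integrating $u_t$ in $t$ using \eqref{1.15} gives $\|u(t)\|_{L^\infty}\le\|u_0\|_{L^\infty}+KT$. Equations \eqref{C-25}--\eqref{C-26} are linear ODEs in $t$ for $\upsilon,\omega$ with coefficients controlled by the previous step (using the bounds on $r_x,x/r,u,u_t,u_x$); Gronwall gives $\|\upsilon(t)-\upsilon_0\|_{L^\infty}+\|\omega(t)-\omega_0\|_{L^\infty}\lesssim T(K+1)^2$, absorbable for $T$ small into the factor $4$. For $\upsilon/x$ and $\omega/x$, dividing \eqref{C-25}--\eqref{C-26} by $x$ yields linear ODEs in $\upsilon/x$, $\omega/x$ with coefficients involving $u/r=(x/r)(u/x)$, $(\alpha_c/x)(u_x+(u/r)r_x)$, $u_t$ and $u\upsilon/r$ — all bounded by the previous estimates and by the $u/x$-bound in \eqref{1.15}. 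Gronwall again gives the claim. Differentiating \eqref{C-25}--\eqref{C-26} once or twice in $t$ expresses $\partial_t^i\upsilon$, $\partial_t^i\upsilon/x$, $\partial_t^i\omega$, $\partial_t^i\omega/x$ for $i=1,2$ as polynomials of degree at most $2$ in quantities bounded by $K$, producing the $M_0(K^2+K+1)$ bound.

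Finally, for \eqref{BP-6}, the bounds on $\upsilon_x$ and $\omega_x$ come from differentiating \eqref{C-25}, \eqref{C-26} in $x$ and solving algebraically, all other terms being controlled by what has already been established. For $\alpha_0 r_{xx}$ and $\alpha_0 u_{xx}$ I use \eqref{C-24} as an algebraic identity. Expanding the spatial derivative $\bigl(\frac{\alpha_c^2}{x}\frac{x\Theta^2}{rr_x^2(1-\frac{1}{c^2}\frac{\rho_0}{r_x}\frac{x}{r}\Theta)^2}\bigr)_x$ produces a contribution carrying the full $\alpha_c^2$-weight times $-2 r_{xx}/(rr_x^3(1-\cdots)^2)$, whose coefficient is bounded below by $C_*^{-1}\alpha_c^2$, while every other term of \eqref{C-24} carries at least one factor $\alpha_c$; solving for $r_{xx}$ therefore yields $\alpha_c r_{xx}\in L^\infty$, hence $\alpha_0 r_{xx}\in L^\infty$ with the quoted bound. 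Since $\alpha_0$ is time-independent and $r_{xt}=u_x$, we have $(\alpha_0 r_{xx})_t=\alpha_0 u_{xx}$; differentiating in $t$ the algebraic identity for $\alpha_0 r_{xx}$ and using the $\partial_t$-quantities of \eqref{1.15} produces the same type of bound for $\alpha_0 u_{xx}$. The main technical obstacle is precisely this last step: one must verify that no term arising from expanding $\partial_x$ of \eqref{C-24} carries a negative power of $\alpha_c$ that would destroy the $L^\infty$-bound, and this is where the physical vacuum assumption $-\infty<\partial_x\rho_0(1)<0$ together with $\rho_0\in C^2$ are crucial, ensuring that $\alpha_c'$ and $(\alpha_c^2/x)_x$ are themselves bounded.
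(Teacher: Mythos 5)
Your treatment of \eqref{XXX1}, \eqref{XXX3} and \eqref{XXX4} is essentially the paper's argument (fundamental theorem of calculus for $r_x$, $x/r$, $u$, then Gronwall on \eqref{C-25}--\eqref{C-26} and on their $t$-derivatives, using that \eqref{1.15} controls $\frac{\alpha_0(x)}{x}\partial_t\partial_x u$ and $\partial_t^2u$), and that part is fine. The gap is in \eqref{BP-6}, which is the heart of the lemma. You claim that $\upsilon_x,\omega_x$ follow from differentiating \eqref{C-25}--\eqref{C-26} in $x$ and ``solving algebraically, all other terms being controlled by what has already been established.'' But the $x$-differentiated equation (the paper's \eqref{HU-1}) contains the terms $\frac{1}{c^{2}}b_{11}\frac{\upsilon}{x}\alpha_{c}(x)u_{xx}$ and $\frac{1}{c^{2}}b_{12}u\upsilon\, u_{xt}$, and neither is controlled at that stage: \eqref{1.15} only bounds $\rho_0 u_{xt}=\frac{\alpha_0(x)}{x}\partial_t\partial_x u$, which degenerates at $x=1$ and does not control bare $u_{xt}$, while $\alpha_c u_{xx}$ is precisely one of the quantities \eqref{BP-6} is supposed to deliver. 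Conversely, your algebraic identity for $\alpha_0 r_{xx}$ from \eqref{C-24} is not free of these unknowns either: expanding the pressure term produces $\partial_x\Theta$, i.e. $\frac{1}{c^2}(uu_x+\upsilon\upsilon_x+\omega\omega_x)$, so what one actually gets is the paper's \eqref{TWO-1}, $\|\alpha_0 r_{xx}\|_{L^\infty}\le M_0\bigl(K+1+\|(\upsilon_x,\omega_x)\|_{L^\infty}\bigr)$, and your route to $\alpha_0u_{xx}$ (time-differentiating that identity) brings in $\upsilon_{xt},\omega_{xt}$, which again feed back through \eqref{HU-1}. As stated, the scheme $\upsilon_x,\omega_x \Rightarrow \alpha_0 r_{xx} \Rightarrow \alpha_0 u_{xx} \Rightarrow \upsilon_x,\omega_x$ is circular, and you have not supplied the mechanism that breaks the circle.

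The paper breaks it differently: it integrates \eqref{HU-1} in time, integrates by parts to move the $t$-derivative off $u_x$ in the $u_{xt}$-term (so only $u_x(t)$, bounded by \eqref{1.15}, appears at the endpoint), and uses the time-differentiated radial equation \eqref{first-T} to replace $\int_0^t b_{11}\frac{\upsilon}{x}\alpha_c u_{xx}\,d\tau$ by controlled quantities plus an endpoint term of size $O(c^{-4})$ that is \emph{linear in} $\upsilon_x(t),\omega_x(t)$; this endpoint term is then absorbed into the left-hand side, which is exactly why the modified coefficients in \eqref{HU-6}--\eqref{HU-8} appear and why the largeness of $c$ (the threshold $\overline c$) is needed at this point. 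Only after adding the $\upsilon_x$- and $\omega_x$-inequalities and invoking \eqref{TWO-1} does a closed integral inequality emerge, to which Gronwall applies; $\alpha_0u_{xx}$ is then recovered from \eqref{first-T}. So the smallness in $1/c^2$ that you invoke only for the background quantities is in fact the essential ingredient for \eqref{BP-6} itself, and your proposal needs this time-integration/absorption step (or an equivalent fixed-point argument quantifying the $O(c^{-2})$ coupling) to be a proof.
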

\begin{proof}By the fundamental theorem of calculus, we easily obtain that there exist a positive constant $c_{0}$ and small $0<T_{0}\leq T$ such that for any $c\geq c_{0}$ and $t\in(0,T_{0}]$
\begin{equation}\|u(t)\|\leq2\|u_{0}\|,~~\frac{2}{3}\leq\frac{x}{r}\leq2,~\frac{1}{2}\leq r_{x}\leq\frac{3}{2}.\label{A-1.2}\end{equation}
By \eqref{1.15}, integrating \eqref{C-25} over $(0,t)$ with respect to $t$ and taking $L^{\infty}-$norm yield
\begin{equation}\|\upsilon(t)\|_{L^{\infty}}\leq\|\upsilon_{0}\|_{L^{\infty}}+CK\int^{t}_{0}(\|\upsilon(\tau)\|_{L^{\infty}}+\|\upsilon(\tau)\|^{3}_{L^{\infty}})d\tau.\nnm\end{equation}
Using the Gronwall inequality,  it follows
\begin{equation}\|\upsilon(t)\|_{L^{\infty}}\leq 2\|\upsilon_{0}\|_{L^{\infty}}.\label{UUU-1}\end{equation}
Similarly, we can obtain from \eqref{C-26}
\begin{equation}\|\omega(t)\|_{L^{\infty}}\leq 2\|\omega_{0}\|_{L^{\infty}}.\label{UUUX-1}\end{equation}
By \eqref{A-1.2}-\eqref{UUUX-1},we easily obtain that there exist a positive constant $c_{1}\geq c_{0}$ and small $0<T_{1}\leq T_{0}$ such that \eqref{XXX1} holds for any $t\in(0,T_{1}]$. Similarly, we can obtain \eqref{XXX3}-\eqref{XXX4} by dividing \eqref{C-25} and \eqref{C-26} by $x$  and differentiating  with respect to $t,$  respectively.
\par Differentiating \eqref{C-25} with respect to $x,$ we have
\begin{equation}\upsilon_{xt}-\frac{1}{c^{2}}b_{11}\frac{\upsilon}{x} \alpha_{c}(x)u_{xx}-\frac{1}{c^{2}}b_{12}u \upsilon u_{xt}=g_{x},\label{HU-1}\end{equation}
where
\begin{align}g_{x}:=\frac{1}{c^{2}}(b_{11}\frac{\upsilon}{x}\alpha_{c}(x))_{x}u_{x}+\frac{1}{c^{2}}(b_{12}u\upsilon)_{x}u_{t}
+\left[(\frac{b_{11}r_{x}}{c^{2}}\frac{\alpha_{c}(x)}{x}-1+\frac{b_{12}\upsilon^{2}}{c^{2}})\frac{x}{r}\frac{u}{x}\upsilon\right]_{x}.\label{UHC-1}
\end{align}
Integrating \eqref{HU-1} over $(0,t)$ shows
\begin{align}&(\upsilon_{x}-\frac{1}{c^{2}}b_{12}u\upsilon u_{x} )|^{t}_{0}+\int^{t}_{0}(b_{12}u\upsilon)_{t}u_{x}d\tau-\frac{1}{c^{2}}\int^{t}_{0}b_{12}\frac{\upsilon}{x}\alpha_{c}(x)u_{xx}dx-\int^{t}_{0}g_{x}d\tau=0.\label{HU-2}
\end{align}
By differentiating \eqref{C-24} with respect to $t,$ one has
\begin{align}&~a_{11}\alpha_{c}(x)\partial^{2}_{t}u+\partial_{t}a_{11}\alpha_{c}(x)u_{t}-\partial_{t}\left(\alpha_{c}(x)a_{11}\frac{\upsilon^{2}}{r}\right)\nnm\\
&-\left\{J\Theta^{2}\frac{\alpha^{2}_{c}(x)}{x}\left[2\frac{u_{x}}{r^{2}_{x}}+\frac{x}{rr_{x}}\left(1+\frac{1}{c^{2}}\frac{\rho_{0}}{r_{x}}\frac{x}{r}\Theta\right)\frac{u}{x}\right]\right\}_{x}\nnm\\
&-\left[\frac{2J}{c ^{2}r_{x}}\frac{\alpha^{2}_{c}(x)}{x}(uu_{t}+\upsilon\upsilon_{t}
+\omega\omega_{t})\right]_{x}+J\Theta^{2}\frac{x}{r}\frac{\alpha^{2}_{c}(x)}{x^{2}}\left[\left(1+\frac{1}{c^{2}}\frac{\rho_{0}}{r_{x}}\frac{x}{r}\Theta\right)\frac{u_{x}}{r_{x}}
+2\frac{x}{r}\frac{u}{x})\right]\nnm\\
&+\frac{2Jx}{c ^{2}r}\frac{\alpha^{2}_{c}(x)}{x^{2}}(uu_{t}+\upsilon\upsilon_{t}+\omega\omega_{t})
+\frac{1}{c^{2}}\left[a_{12}\frac{x}{rr^{2}_{x}}\frac{\alpha^{2}_{c}(x)}{x}(u_{x}+\frac{xr_{x}}{r}\frac{u}{x})u\right]_{t}=0,\label{first-T}
\end{align}
where
\begin{equation}J:=\frac{x}{rr_{x}}\frac{1}{(1-\frac{1}{c^{2}}\frac{\rho_{0}}{r_{x}}\frac{x}{r}\Theta)^{3}}.\nnm\end{equation}
From\eqref{first-T},
\begin{align}-\frac{1}{c^{2}}\int^{t}_{0}&b_{11}\frac{\upsilon}{x}\alpha_{c}(x)u_{xx}d\tau\geq\frac{1}{c^{4}}b_{11}\frac{\upsilon}{x}\frac{r_{x}}{\Theta^{2}}\alpha_{c}(x)(uu_{x}
+\upsilon\upsilon_{x}+\omega\omega_{x})\nnm\\
&~~~~-M_{0}\left(1+K^{2}+K+K\int^{t}_{0}\|(\upsilon_{x},\omega_{x},\alpha_{c}(x)r_{xx})(\tau)\|_{L^{\infty}}d\tau\right).\label{HU-4}\end{align}
We easily obtain , with the help \eqref{XXX1}-\eqref{XXX3}, that
\begin{align}-\frac{1}{c^{2}}\int^{t}_{0}&\left((b_{12}u\upsilon)_{t}u_{x}+g_{x}\right)d\tau\nnm\\
&\leq M_{0}\left(K^{2}+K+K\int^{t}_{0}\|(\upsilon_{x},\omega_{x},\alpha_{c}(x)r_{xx})(\tau)\|_{L^{\infty}}d\tau\right).\label{HU-5}\end{align}

From \eqref{C-24}, it holds that
\begin{equation}\|\alpha_{0}(x)r_{xx}(t)\|_{L^{\infty}}\leq M_{0}\left(K+1+\|(\upsilon_{x},\omega_{x})(t)\|_{L^{\infty}}\right).\label{TWO-1}\end{equation}
Thus, \eqref{HU-2} together with \eqref{HU-4}-\eqref{TWO-1} gives
\begin{align}&\left[(1+\frac{1}{c^{2}}b_{11}\frac{\alpha_{c}(x)}{x}\frac{\upsilon^{2}}{c^{2}}
\frac{r_{x}}{\Theta^{2}})\upsilon_{x}-\frac{1}{c^{2}}
(b_{12}-\frac{b_{11}}{c^{2}}\frac{\alpha_{c}(x)}{x}\frac{r_{x}}{\Theta^{2}})\upsilon uu_{x}+\frac{1}{c^{4}}b_{11}\frac{\upsilon}{x}\frac{r_{x}}{\Theta^{2}}\alpha_{c}(x)\omega\omega_{x}\right]\nnm\\
&\leq M_{0}\left(1+K^{2}+K+K\int^{t}_{0}\|(\upsilon_{x},\omega_{x},\alpha_{c}(x)r_{xx})(\tau)\|_{L^{\infty}}d\tau\right).\label{HU-6}
\end{align}
By \eqref{C-24}, \eqref{C-26} and  \eqref{first-T}, a similar argument to \eqref{HU-6} yields
\begin{align}&\left[(1+\frac{1}{c^{2}}b_{11}\frac{\alpha_{c}(x)}{x}\frac{\omega^{2}}{c^{2}}
\frac{r_{x}}{\Theta^{2}})\omega_{x}-\frac{1}{c^{2}}
(b_{12}-\frac{b_{11}}{c^{2}}\frac{\alpha_{c}(x)}{x}\frac{r_{x}}{\Theta^{2}})\omega uu_{x}+\frac{1}{c^{4}}b_{11}\frac{\omega}{x}\frac{r_{x}}{\Theta^{2}}\alpha_{c}(x)\upsilon\upsilon_{x}\right]\nnm\\
&\leq M_{0}\left(K^{2}+K+K\int^{t}_{0}\|(\upsilon_{x},\omega_{x},\alpha_{c}(x)r_{xx})(\tau)\|_{L^{\infty}}d\tau\right).\label{HU-7}
\end{align}
which in combination with \eqref{HU-6} gives
\begin{align}&\left(1+\frac{1}{c^{2}}\frac{b_{11}r_{x}}{\Theta^{2}}\frac{\alpha_{c}(x)}{x}
\frac{(\upsilon-\omega)\upsilon}{c^{2}}\right)\upsilon_{x}+
\left(1+\frac{1}{c^{2}}\frac{b_{11}r_{x}}{\Theta^{2}}\frac{\alpha_{c}(x)}{x}
\frac{(\omega-\upsilon)\omega}{c^{2}}\right)\omega_{x}\nnm\\
&\leq M_{0}\left(1+K^{2}+K+K\int^{t}_{0}\|(\upsilon_{x},\omega_{x},\alpha_{c}(x)r_{xx})(\tau)\|_{L^{\infty}}d\tau\right).\label{HU-8}
\end{align}
Then, there exists a positive constant $\overline{c}>c_{1}$ such that for any $c\geq \overline{c}$ and $t\in(0,\overline{T}](0<\overline{T}<T_{1}),$
using the Gronwall inequality to \eqref{HU-8} shows \eqref{XXX4}
with the help of \eqref{first-T} and \eqref{TWO-1}. This completes the proof of Lemma \ref{PPP1}.\end{proof}
\par Due to the complicated structures of coefficients in \eqref{CCC0}-\eqref{CCC1}, we give some important estimates of these coefficients in the following lemma \ref{lemma2.1} in order to simplify our priori estimates. Before the statement, we need the following facts which derive from the higher energy function $E(t).$
Using $H^{1}(0,1)\hookrightarrow L^{\infty}(0,1),~H^{\frac{1}{2}}(0,1)\hookrightarrow L^{p}(0,1)(1<p<\infty)$ and the weighted norm estimate \eqref{WS2}, it holds from $E(t)$ that
\begin{align}&\|\left(\frac{u}{x},u_{x},\alpha_{0}(x)u_{xx},\partial_{t}u,\frac{\partial_{t}u}{x}
,\frac{\alpha_{0}(x)}{x}\partial_{t}\partial_{x}u,\partial^{2}_{t}u,\frac{\alpha_{0}(x)}{\sqrt{x}}\partial^{2}_{t}\partial_{x}u,\frac{\alpha_{0}(x)}{\sqrt{x}}\partial^{3}_{t}u\right)(t)\|_{L^{\infty}}\nnm\\
&+~~\|\left(\frac{\alpha_{0}(x)}{\sqrt{x}}\partial^{3}_{t}\partial_{x}u,\frac{\alpha_{0}(x)}{\sqrt{x}}\partial_{t}\partial^{2}_{x}u\right)(t)\|_{\frac{1}{2}}+\|\left(\partial_{t}\partial_{x}u,\frac{\alpha_{0}(x)}{\sqrt{x}}\partial_{t}\partial^{2}_{x}u,\frac{\alpha_{0}(x)}{\sqrt{x}}\partial^{3}_{t}\partial_{x}u\right)(t)\|_{L^{p}}\nnm\\
&~~~~\leq C\sqrt{E(u)}.\label{E(u)-1}\end{align}
By the fundamental theorem of calculus,
\begin{align}&\|\left(\frac{u}{x},\frac{\alpha_{0}(x)}{x}u_{x},\partial_{t}u,\frac{\alpha_{0}(x)}{\sqrt{x}}\partial_{t}\partial_{x}u,\frac{\alpha_{0}(x)}{\sqrt{x}}\partial^{2}_{t}\partial_{x}u\right)(t)\|_{L^{\infty}}\nnm\\
+&\|\left(u_{x},\frac{\alpha_{0}(x)}{\sqrt{x}}\partial^{2}_{x}u,\partial^{2}_{t}u,\sqrt{\alpha_{0}(x)}\partial_{t}\partial^{2}_{x}u,\partial^{3}_{t}
u\right)(t)\|_{L^{p}}\nnm\\
&\leq\mathscr{P}_{0}+C\int^{t}_{0}\sqrt{E(u)(\tau)}d\tau.\label{E(u)-2}
\end{align}

 \par
 We define the functions $\mathscr{K}_{t,x}^{i,j}(x,t)(i=0,1,...,5,~j=0,1,2)$,where $i$ is the order of time derivatives and $j$ is the order of special derivatives,  as
\begin{align}
\mathscr{K}_{t,x}^{1,0}(x,t)&:=|\partial_{t}u|+|\partial_{t}\upsilon|+|\partial_{t}\omega|+|u_{x}|+|\frac{u}{x}|,\nnm\\
\mathscr{K}_{t,x}^{i,0}(x,t)&:=\sum_{\mbox{\tiny$\begin{array}{c}
\mu+\nu=i\nnm\\
\mu,\nu\geq1\end{array}$}}\mathscr{K}_{t,x}^{\mu,0}(x,t)\mathscr{K}_{t,x}^{\nu,0}(x,t)+|\partial^{i-1}_{t}\partial_{x}u|+|\frac{\partial^{i-1}_{t}u}{x}|\\
&+\sum_{\mbox{\tiny$\begin{array}{c}
\mu+\nu=i\\
\mu,\nu=0\end{array}$}}\left(|\partial^{\mu}_{t}u||\partial^{\nu}_{t}u|+|\partial^{\mu}_{t}\upsilon||\partial^{\nu}_{t}\upsilon|+|\partial^{\mu}_{t}\omega||\partial^{\nu}_{t}\omega|\right)(i\geq2),\nnm\\ \mathscr{K}_{t,x}^{0,1}(x,t)&:=|\partial_{x}u|+|\partial_{x}\upsilon|+|\partial_{x}\omega|+|r_{xx}|+|(\frac{x}{r})_{x}|+|(\rho_{0},u_{0},\upsilon_{0},\omega_{0})_{x}|,\nnm\end{align}
 \begin{align}
 \mathscr{K}_{t,x}^{0,j}(x,t)&=\sum_{\mbox{\tiny$\begin{array}{c}
  \mu+\nu=j\nnm\\
  \mu,\nu\geq1\end{array}$}}\mathscr{K}_{t,x}^{0,\mu}(x,t)\mathscr{K}_{t,x}^{0,\nu}(x,t)+|\partial^{j+1}_{x}r|+|\partial^{j}_{x}(\frac{x}{r})|+|\partial^{j}_{x}(\rho_{0},u_{0},\upsilon_{0},\omega_{0})|\\
  &+\sum_{\mbox{\tiny$\begin{array}{c}
  \mu+\nu=j\\
  \mu,\nu=0\end{array}$}}\left(|\partial^{\mu}_{x}u||\partial^{\nu}_{x}u|+|\partial^{\mu}_{x}\upsilon||\partial^{\nu}_{x}\upsilon|+|\partial^{\mu}_{x}\omega||\partial^{\nu}_{x}\omega|\right)(j\geq2),\nnm\\
 \mathscr{K}_{t,x}^{1,1}(x,t)&:= \mathscr{K}_{t,x}^{1,0}(x,t)\mathscr{K}_{t,x}^{0,1}(x,t)+|u||\partial_{t}\partial_{x}u|+|\upsilon||\partial_{t}\partial_{x}\upsilon|+|\omega||\partial_{t}\partial_{x}\omega|\nnm\\
 &+|u_{xx}|+|(\frac{u}{x})_{x}|,\nnm\\
\mathscr{K}_{t,x}^{i,1}(x,t)&:=  \mathscr{K}_{t,x}^{0,1}(x,t)\mathscr{K}_{t,x}^{i,0}(x,t)+\sum_{\mbox{\tiny$\begin{array}{c}
\mu+\nu=i\nnm\\
\mu,\nu\geq1\end{array}$}}\mathscr{K}_{t,x}^{\mu,1}(x,t)\mathscr{K}_{t,x}^{\nu,0}(x,t)+|\partial^{i-1}_{t}\partial^{2}_{x}u|
\\&+\sum_{\mbox{\tiny$\begin{array}{c}
\mu+\nu=i\\
\mu,\nu=0\end{array}$}}\left(|\partial^{\mu}_{t}\partial_{x}u||\partial^{\nu}_{t}u|+|\partial^{\mu}_{t}\partial_{x}\upsilon||\partial^{\nu}_{t}\upsilon|+|\partial^{\mu}_{t}\partial_{x}\omega||\partial^{\nu}_{t}\omega|\right)\nnm\\
&+|(\frac{\partial^{i-1}_{t}u}{x})_{x}|(i\geq2).\nnm \end{align}

\begin{lemma}\label{lemma2.1}Let
\begin{equation} f(u,\upsilon,\omega,r_{x},\frac{x}{r},\rho_{0},u_{0},\upsilon_{0},\omega_{0})\in C^{\infty}(\underset{i=1}{\prod}{[l_{i},m_{i}]}),(-\infty<l_{i}<m_{i}<\infty, i=1,2,...,6).\nnm\end{equation} Denoting by  $f(x,t)=f(u,\upsilon,\omega,r_{x},\frac{x}{r},\rho_{0},u_{0},\upsilon_{0},\omega_{0}),$ then for any $1<p<\infty,$
\begin{equation}|\partial^{i}_{t}\partial^{j}_{x}f(x,t)|\leq M_{0}\mathscr{K}_{t,x}^{i,j}(x,t)\nnm.\end{equation}
Moreover,
\begin{align}&\|\mathscr{K}_{t,x}^{1,0}(t)\|_{L^{p}}+\|\mathscr{K}_{t,x}^{2,0}(t)\|_{0}+\|\frac{\alpha_{c}(x)}{\sqrt{x}}\mathscr{K}_{t,x}^{2,0}(t)\|_{L^{\infty}}+\|\frac{\alpha_{c}(x)}{\sqrt{x}}\mathscr{K}_{t,x}^{3,0}(t)\|_{L^{p}}\nnm\\
&+\|\alpha_{0}(x)\mathscr{K}_{t,x}^{4,0}(t)\|_{0}\leq M_{0}\left[\mathscr{P}_{0}+\left(\mathscr{P}_{4}(K)+1\right)\int^{t}_{0}\sqrt{E(\tau)}d\tau\right],\label{U-6}\\
&\|\mathscr{K}_{t,x}^{1,0}(t)\|_{L^{\infty}}+\|\mathscr{K}_{t,x}^{2,0}(t)\|_{L^{p}}+\|\frac{\alpha_{0}(x)}{\sqrt{x}}\mathscr{K}_{t,x}^{3,0}(t)\|_{L^{\infty}}+\|\mathscr{K}_{t,x}^{3,0}(t)\|_{0}\nnm\\
&+\|\frac{\alpha_{0}(x)}{\sqrt{x}}\mathscr{K}_{t,x}^{4,0}(t)
\|_{L^{p}}\leq M_{0}\left[\mathscr{P}_{4}(K)+(\mathscr{P}_{4}(K)+1)\sqrt{E(t)}\right],\label{U-7}\\
&\|\sqrt{\alpha_{0}(x)}\mathscr{K}_{t,x}^{5,0}(x,t)(t)\|_{0}\nnm\\
&\leq M_{0}\|(\sqrt{\alpha_{0}(x)}\partial^{5}_{t}u,\frac{\alpha_{0}(x)}{\sqrt{x}}
\partial^{4}_{t}\partial_{x}u,\frac{\alpha_{0}(x)}{\sqrt{x}}\frac{\partial^{4}_{t}u}{x},\sqrt{\alpha_{0}(x)}\partial^{5}_{t}
\upsilon\|_{0}\nnm\\
&+\|\sqrt{\alpha_{0}(x)}\partial^{5}_{t}\omega)(t)\|_{0}+M_{0}\left[\mathscr{P}_{5}(K)+(\mathscr{P}_{5}(K)+1)\sqrt{E(t)}\right],\label{U-8} \\
&\|(\mathscr{K}_{t,x}^{0,1},\alpha_{0}(x)\mathscr{K}_{t,x}^{1,1},\xi\frac{\alpha_{0}}{\sqrt{x}}\mathscr{K}_{t,x}^{2,1})(t)\|_{0}+\|(\xi\mathscr{K}_{t,x}^{0,1},\alpha_{0}(x)\mathscr{K}_{t,x}^{0,1})(t)\|_{L^{\infty}}\nnm\\
&\leq M_{0}\left[\mathscr{P}_{0}+\left(\mathscr{P}_{2}(K)+1\right)\int^{t}_{0}\sqrt{E(\tau)}d\tau\right],\label{U-80}\\
&\|\mathscr{K}_{t,x}^{0,1}(t)\|_{L^{\infty}}+\|\mathscr{K}_{t,x}^{1,1}(t)\|_{0}\leq M_{0}\left[\mathscr{P}_{2}(K)+(\mathscr{P}_{2}(K)+1)\sqrt{E(t)}\right]\label{U-81}. \end{align}

\end{lemma}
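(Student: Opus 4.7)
\medskip
\noindent\textbf{Proof proposal for Lemma \ref{lemma2.1}.}

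The plan is to split the argument into two parts: a pointwise (algebraic) estimate coming from the chain rule, and a norm estimate that feeds the pointwise bound into the embeddings and into the energy functional $E(t)$.

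First, for the pointwise bound $|\partial_t^i\partial_x^j f|\le M_0\,\mathscr{K}_{t,x}^{i,j}$, I would argue as follows. By Lemma \ref{PPP1} the arguments $(u,\upsilon,\omega,r_x,x/r)$ take values in a compact box uniformly on $[0,1]\times[0,\overline{T}]$, and $(\rho_0,u_0,\upsilon_0,\omega_0)$ are in $C^2([0,1])$; since $f\in C^\infty$ on the closed product of intervals, every partial derivative of $f$ is bounded by a constant $M_0$. Applying the Faà di Bruno formula to $\partial_t^i\partial_x^j f$ produces a finite sum of products of derivatives of $f$ evaluated at the arguments times monomials in the $\partial_t^\mu\partial_x^\nu$-derivatives of $u,\upsilon,\omega,r_x,x/r,\rho_0,u_0,\upsilon_0,\omega_0$. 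Using $r_t=u$ (so $\partial_t r_x=u_x$, $\partial_t(x/r)=-x u/r^2$, which is controlled by $|u/x|$ because $x/r$ is bounded), an induction on the total order $i+j$ shows each such monomial is absorbed by the definition of $\mathscr{K}_{t,x}^{i,j}$, yielding the claimed pointwise estimate.

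Next, for the weighted norm bounds \eqref{U-6}--\eqref{U-81}, the strategy is to insert the definition of $\mathscr{K}_{t,x}^{i,j}$ and control each term by either: (i) a direct piece of $E(t)$, possibly via $H^1\hookrightarrow L^\infty$, $H^{1/2}\hookrightarrow L^p$, or the weighted embedding $H^{a,b}\hookrightarrow H^{b-a/2}$ used in \eqref{WS2}--\eqref{WS3}; (ii) the a priori $L^\infty$ bounds \eqref{XXX1}--\eqref{BP-6} and the inequality \eqref{1.15} to absorb products involving $K$; or (iii) for the pure time-derivative terms, by integrating in time so that $\int_0^t\sqrt{E(\tau)}\,d\tau$ appears through the fundamental-theorem-of-calculus statement \eqref{E(u)-2}. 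Concretely, \eqref{U-6} and \eqref{U-80} collect terms in which at least one factor has been integrated in time from the initial data, producing the $\mathscr{P}_0+\int_0^t\sqrt{E}\,d\tau$ structure; \eqref{U-7} and \eqref{U-81} collect the corresponding terms kept at time $t$, giving $\sqrt{E(t)}$ on the right-hand side after applying \eqref{E(u)-1}. The quadratic and cubic products $\partial_t^\mu u\,\partial_t^\nu u$ appearing in $\mathscr{K}_{t,x}^{i,0}$ for $i\ge 2$ are handled by Hölder with one $L^\infty$-factor (bounded through \eqref{E(u)-1} or \eqref{XXX3}--\eqref{BP-6}) times one $L^p$- or $L^2$-factor, which introduces the polynomial prefactor $\mathscr{P}_m(K)$ via Lemma \ref{PPP1} and \eqref{1.15}.

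The main obstacle will be the top-order estimate \eqref{U-8}. Here $\mathscr{K}_{t,x}^{5,0}$ contains genuinely borderline products such as $\partial_t^3 u\,\partial_t^2 u$ and $\partial_t^4 u\,\partial_t u$, together with $\partial_t^4\partial_x u$, $\partial_t^4 u/x$, and $\partial_t^5 u,\partial_t^5\upsilon,\partial_t^5\omega$ themselves, none of which can be absorbed into a time integral of $\sqrt{E(\tau)}$ without losing the weight $\sqrt{\alpha_0(x)}$. The plan is to pair each product so that the more singular factor carries the weight: for the crossed products, place the factor of higher time order inside the weighted $L^2$ norm (using the fact that $E(u)$ controls $\sqrt{\alpha_0(x)}\,\partial_t^{2s+1}\partial_x^{2-s}u$ and $\partial_t^{2s+1}u$ in $H^{3/2-s}$), and estimate the remaining factor in $L^\infty$ through the embeddings that appear in $E(\upsilon),E(\omega)$ for $\partial_t^i u,\partial_t^i\upsilon,\partial_t^i\omega$ with $i\le 3$. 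The un-paired fifth-order pieces $\sqrt{\alpha_0(x)}\partial_t^5 u,\sqrt{\alpha_0(x)}\partial_t^5\upsilon,\sqrt{\alpha_0(x)}\partial_t^5\omega$ and $\frac{\alpha_0(x)}{\sqrt{x}}\partial_t^4\partial_x u,\frac{\alpha_0(x)}{\sqrt{x}}\partial_t^4 u/x$ are then simply transferred unchanged to the right-hand side, which explains the first line of \eqref{U-8}. The remaining contributions are dominated in the same fashion as in \eqref{U-7}, producing the factor $\mathscr{P}_5(K)+(\mathscr{P}_5(K)+1)\sqrt{E(t)}$. The mixed time--space estimates \eqref{U-80}--\eqref{U-81} are obtained analogously, with the cut-off $\xi$ allowing the top space derivative $\partial_t^{i-1}\partial_x^2 u$ to be controlled away from the boundary and the weight $\alpha_0(x)$ to carry the degeneracy near $x=1$.
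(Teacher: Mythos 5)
Your proposal is correct and follows essentially the same route as the paper, whose proof of Lemma \ref{lemma2.1} consists of exactly the ingredients you list: the chain rule for the pointwise bound $|\partial^{i}_{t}\partial^{j}_{x}f|\leq M_{0}\mathscr{K}^{i,j}_{t,x}$, and then \eqref{E(u)-1}--\eqref{E(u)-2}, the weighted embeddings \eqref{WS2}--\eqref{WS3}, the Sobolev embeddings and the fundamental theorem of calculus for the norm estimates \eqref{U-6}--\eqref{U-81}. Your elaboration (in particular the pairing of the borderline products in \eqref{U-8} and the transfer of the unabsorbable fifth-order terms to the right-hand side) fills in details the paper leaves implicit, but it is the same argument.
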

\begin{proof}By the chain rules, \eqref{E(u)-1}-\eqref{E(u)-2}, \eqref{WS2}-\eqref{WS3}, the Sobolev embedding and the fundamental theorem of calculus, we can easily obtain \eqref{U-6}-\eqref{U-81}.\end{proof}

\section{Energy Estimates for the case $\gamma=2$}\label{sec-3}
   In this section, we construct the higher order energy estimates of local smooth solutions to  the boundary value problem \eqref{C-24}--\eqref{L1.2}  on $[0,1]\times[0,T]$
    under the assumption \eqref{1.15}. We need to control $\alpha_{0}(x)u_{xxx}$  in order to obtain $\|u\|_{H^{2}}$  according to \eqref{WS2}. Using the equation \eqref{first-T}, the estimate of $\|\partial^{4}_{t}u\|_{0}$ is needed.
    \begin{lemma}\label{PPPVV1}
    Let $T>0$ and $(r,u,\upsilon,\omega)$ be a classical solution to the free boundary problem \eqref{C-24}--\eqref{L1.2} satisfying \eqref{1.15} on $[0,1]\times[0,T]$. Then, there exist a small time $0<\overline{T}_{1}\leq \overline{T}$ and a positive constant $\overline{c}_{1}\geq\overline{c}$ $($only depending on $\|(\rho_{0},u_{0},\upsilon_{0},\omega_{0})\|_{L^{\infty}})$ such that for any $t\in(0,\overline{T}_{1}]$ and $c\geq \overline{c}_{1}$ the following estimates hold
    \begin{align}&\|\left(\sqrt{\alpha(x)}\partial^{5}_{t}u,\frac{\alpha(x)}{\sqrt{x}}\frac{\partial^{4}_{t}
u}{x},\frac{\alpha(x)}{\sqrt{x}}\partial^{4}_{t}\partial_{x}u\right)(\tau)\|^{2}_{0}\nnm\\
&\leq \mathscr{P}_{0}+M_{0}(\mathscr{P}_{10}(K)+1)\int^{t}_{0}(E^{2}(\tau)+E(\tau))d\tau
\nnm\\
&+M_{0}(\mathscr{P}_{10}(K)+1)E(\tau)\int^{t}_{0}E(\tau)d\tau
.\label{mai5} \end{align}\end{lemma}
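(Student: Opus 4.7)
The plan is to differentiate the radial momentum equation \eqref{C-24} four times in $t$, pair the resulting equation with $\partial_t^5 u$, integrate in $x\in[0,1]$, and exploit the fact that the coefficient $a_{11}\alpha_c(x)$ in front of $u_t$ supplies a coercive term $\|\sqrt{\alpha_c}\,\partial_t^5 u\|_0^2$ via the uniform lower bound of Lemma \ref{PPP1}. After integration by parts on the divergence-form flux, the leading contributions reorganize into a $\tfrac{d}{dt}$-quadratic whose integration in $\tau\in[0,t]$ yields the two weighted norms $\|\tfrac{\alpha_c}{\sqrt x}\partial_t^4\partial_x u\|_0^2$ and $\|\tfrac{\alpha_c}{\sqrt x}\partial_t^4 u/x\|_0^2$. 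The boundary terms vanish because $\alpha_c(0)=\alpha_c(1)=0$ together with $u(0,t)=0$, and the initial data are controlled by $\mathscr{P}_0$ through the compatibility conditions \eqref{compat-1}--\eqref{compat-3}.

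Write \eqref{C-24} schematically as $a_{11}\alpha_c(u_t-\upsilon^2/r)+\Phi_x-\Psi+\mathcal{N}/c^2=0$, with
\begin{equation*}
\Phi:=\frac{\alpha_c^2(x)}{x}\cdot\frac{x\Theta^2}{rr_x^2\bigl(1-\tfrac{1}{c^2}\tfrac{\rho_0}{r_x}\tfrac{x}{r}\Theta\bigr)^2},
\end{equation*}
$\Psi$ the geometric $1/x^2$-term of \eqref{C-24}, and $\mathcal N$ the nonlinear $1/c^2$-source. Applying $\partial_t^4$ and isolating the leading components,
\begin{equation*}
a_{11}\alpha_c\,\partial_t^5 u+\partial_x\!\Bigl[\tfrac{\alpha_c^2}{x}\bigl(C_1\partial_t^4\partial_x u+C_2\tfrac{\partial_t^4 u}{x}\bigr)\Bigr]-\tfrac{\alpha_c^2}{x}\bigl(C_3\partial_t^4\partial_x u+C_4\tfrac{\partial_t^4 u}{x}\bigr)+(\text{commutators})=0,
\end{equation*}
with $C_1,\dots,C_4$ smooth bounded functions of $(u,\upsilon,\omega,r_x,x/r,\rho_0)$ by Lemma \ref{lemma2.1}. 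Multiplying by $\partial_t^5 u$, integrating in $x$, integrating by parts on the $\partial_x[\cdots]$ piece, and re-grouping $\partial_t^4\partial_x u\cdot\partial_x\partial_t^5 u=\tfrac12\partial_t(\partial_t^4\partial_x u)^2$ together with the analogous expression for $\partial_t^4 u/x$ produces the identity
\begin{equation*}
\int a_{11}\alpha_c(\partial_t^5 u)^2\,dx+\frac12\frac{d}{dt}\!\int\frac{\alpha_c^2}{x}\Bigl[A(\partial_t^4\partial_x u)^2+B\bigl(\tfrac{\partial_t^4 u}{x}\bigr)^2\Bigr]dx=\mathcal R(t),
\end{equation*}
with $A,B>0$ bounded below by \eqref{XXX1}.

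Each term in $\mathcal R(t)$ is split as the $L^2$-product of a high-order factor, to be absorbed into $E(\tau)$ or into a fraction of the coercive left-hand side by Cauchy--Schwarz, and a low-order factor estimated through the pointwise and weighted bounds \eqref{U-6}--\eqref{U-81} together with the interpolations \eqref{E(u)-1}--\eqref{E(u)-2}. Typical commutators of the form $(\partial_t^3 u_x)(\partial_t u_x)\alpha_c^2/x$ and $(\partial_t^2 u_x)^2\alpha_c^2/x$ are dominated by $E^2(\tau)+E(\tau)$ after a H\"older pairing weighted by $\sqrt{\alpha_c/x}$; the contributions from $\Psi$, from $\upsilon^2/r$ and the $\upsilon$--$\omega$ coupling, and from $\partial_t^4\mathcal N/c^2$ are treated identically, the prefactor $1/c^2$ providing the extra smallness needed for $c\geq\overline c_1$. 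Integrating in $\tau\in[0,t]$ and using $E(0)=\mathscr P_0$ for the initial value of the $\tfrac{d}{dt}$-quadratic delivers \eqref{mai5}; the third term $E(t)\int_0^t E\,d\tau$ on the right-hand side arises from the (harmless) product that occurs when, in a bad commutator, one top-order factor is absorbed at time $t$ and the other integrated in $\tau$.

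The hardest point is the preservation of the $\tfrac{d}{dt}$-structure in the energy identity. One must track the Leibniz expansion of $\partial_t^4\Phi$ so that every residue genuinely splits as (pointwise-bounded)$\,\times\,$(top-order quantity with the correct $\alpha_c/\sqrt x$-weight). The commutator $\partial_t^4 r_x=\partial_t^3 u_x$ is particularly delicate because it appears at top order and must be paired against a factor carrying the degenerate weight; and $\partial_t^4\mathcal N$ produces genuine top-order products of the type $\partial_t^5 u\cdot\partial_t^5 u/c^2$ which are only absorbed into the coercive left-hand side by combining the $1/c^2$ smallness, the small-time restriction $\overline T_1$, and the lower bound $c\geq\overline c_1$. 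Without the full weighted pointwise bounds of Lemma \ref{lemma2.1}, rather than the raw Sobolev norms contained in $E$, this absorption would not close.
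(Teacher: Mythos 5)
Your overall strategy (time-differentiate the momentum equation, test with $\partial^{5}_{t}u$, read the weighted norms off a quadratic form) is in the same spirit as the paper's, but as written it contains a derivative miscount that makes your central energy identity structurally impossible. If you apply only $\partial^{4}_{t}$ to \eqref{C-24}, the flux $\Phi$ depends on $r$, $r_{x}$ and $\Theta$, so its top-order time derivatives are $\partial^{3}_{t}\partial_{x}u$, $\partial^{3}_{t}u$ and (with a $c^{-2}$ weight) $\partial^{4}_{t}u,\partial^{4}_{t}\upsilon,\partial^{4}_{t}\omega$ --- not $\partial^{4}_{t}\partial_{x}u$. To have $\partial^{4}_{t}\partial_{x}u$ in the flux you must take five time derivatives in total, which is what the paper does ($\partial^{4}_{t}$ of \eqref{first-T}, i.e.\ \eqref{TB-1} with $k=4$); but then the kinetic term is $a_{11}\alpha_{c}\partial^{6}_{t}u$, and after pairing with $\partial^{5}_{t}u$ it appears as $\tfrac12\tfrac{d}{dt}\int a_{11}\alpha_{c}(\partial^{5}_{t}u)^{2}dx$, not as the bulk coercive term $\int a_{11}\alpha_{c}(\partial^{5}_{t}u)^{2}dx$ you display. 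Your identity mixes the two counts and can arise from neither; moreover with your count the quantity $\|\sqrt{\alpha_{c}}\partial^{5}_{t}u\|^{2}_{0}$ would only be controlled after time integration, whereas \eqref{mai5} requires it pointwise in time, which the paper gets precisely because this term sits under $\tfrac{d}{dt}$.

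More seriously, you never address the term the paper singles out as the main obstruction. After the correct differentiation the flux contains $\frac{2J}{c^{2}r_{x}}\frac{\alpha^{2}_{c}(x)}{x}\bigl(u\partial^{5}_{t}u+\upsilon\partial^{5}_{t}\upsilon+\omega\partial^{5}_{t}\omega\bigr)$ (coming from $\partial_{t}$-derivatives of $\Theta^{2}$), and after the spatial integration by parts this pairs $\partial^{5}_{t}\upsilon,\partial^{5}_{t}\omega$ against $\partial^{5}_{t}\partial_{x}u$: none of these is controlled by $E$ or by the coercive left-hand side, so Cauchy--Schwarz plus the $1/c^{2}$ prefactor cannot absorb it --- smallness in $c$ is useless when the factors themselves are out of reach of the energy. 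The paper closes this gap by using the structure of the system: it takes $\partial^{4}_{t}$ of \eqref{C-25}--\eqref{C-26} to express $\partial^{5}_{t}\upsilon$ and $\partial^{5}_{t}\omega$ through $\frac{1}{c^{2}}b_{11}\frac{\upsilon}{x}\alpha_{c}\partial^{4}_{t}\partial_{x}u+\frac{1}{c^{2}}b_{12}u\upsilon\,\partial^{5}_{t}u+g_{4}$ (see \eqref{SU-1}), substitutes this into the bad term, integrates by parts in time so that $\partial^{4}_{t}\partial_{x}u\,\partial^{5}_{t}\partial_{x}u$ becomes $\tfrac12\partial_{t}(\partial^{4}_{t}\partial_{x}u)^{2}$, and then absorbs the resulting $\frac{1}{c^{4}}(\cdot)\frac{\alpha^{3}_{c}}{x^{2}}(\partial^{4}_{t}\partial_{x}u)^{2}$ contribution at time $t$ into the main quadratic form via the sign condition \eqref{mai4}; this is exactly where the threshold $c\geq\overline{c}_{1}$ enters, and the companion bound \eqref{TF1-5} for $\|\sqrt{\alpha_{c}}(\partial^{5}_{t}\upsilon,\partial^{5}_{t}\omega)\|_{0}$ from \eqref{C-25}--\eqref{C-26} is also needed before Gronwall. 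Without this substitution-plus-absorption step, which is the heart of the paper's proof, your argument does not close.
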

\begin{proof} Taking $\partial^{k}_{t}$ over \eqref{first-T} gives
 \begin{align}&a_{11}\alpha_{c}(x)\partial^{k+2}_{t}u-\partial^{k+1}_{t}\left(a_{11}\alpha_{c}(x)\frac{\upsilon^{2}}{r}\right)\nnm\\
 &-\left\{J\Theta^{2}\frac{\alpha^{2}_{c}(x)}{x}\left[2\frac{\partial^{k}_{t}\partial_{x}u}{r^{2}_{x}}+\frac{x}{rr_{x}}\left(1+\frac{1}{c^{2}}\frac{\rho_{0}}{r_{x}}\frac{x}{r}\Theta\right)\frac{\partial^{k}_{t}u}{x}\right]\right\}_{x}\nnm\\
 &-\left[\frac{2J}{c ^{2}r_{x}}\frac{\alpha^{2}_{c}(x)}{x}(u\partial^{k+1}_{t}u+\upsilon\partial^{k+1}_{t}\upsilon +\omega\partial^{k+1}_{t}\omega)\right]_{x}\nnm\\
 &+J\Theta^{2}\frac{x}{r}\frac{\alpha^{2}_{c}(x)}{x^{2}}\left[\left(1+\frac{1}{c^{2}}\frac{\rho_{0}}{r_{x}}\frac{x}{r}\Theta\right)\frac{\partial^{k}_{t}\partial_{x}u}{r_{x}}
 +2\frac{x}{r}\frac{\partial^{k}_{t}u}{x}\right]+\sum^{5}_{l=1}\mathcal {J}^{k}_{l}=0,\label{TB-1}\end{align}
where the functions $\mathcal {J}^{k}_{l}$ satisfy
\begin{align} \mathcal {J}^{k}_{1}&:=\partial^{k+1}_{t} a_{11}\alpha_{c}(x)\partial_{t}u+\sum^{k}_{i=1}(C^{i}_{k}
  +C^{i-1}_{k})\partial^{i}_{t}a_{11}\alpha_{c}(x)\partial^{k+2-i}_{t}u,\nnm\\
  \mathcal {J}^{k}_{2}&:=-\sum^{k}_{i=1}C^{i}_{k}\left\{\partial^{i}_{t}(\frac{J\Theta^{2}}{r^{2}_{x}})\frac{\alpha^{2}_{c}(x)}{x}2\partial^{k-i}_{t}\partial_{x}u +\partial^{i}_{t}\left[\frac{J\Theta^{2}x}{rr_{x}}\left(1+\frac{1}{c^{2}}\frac{\rho_{0}}{r_{x}}\frac{x}{r}\Theta\right)\right]\frac{\alpha^{2}_{c}(x)}{x}\frac{\partial^{k-i}_{t}u}{x}\right\}_{x},\nnm\end{align}

  \begin{align}{J}^{k}_{3}&:=-\sum^{k-1}_{i=0}\sum^{i}_{j=0}C^{i}_{k}C^{j}_{i}\left[\partial^{k-i}_{t}(\frac{2J}{c ^{2}r_{x}})\frac{\alpha^{2}_{c}(x)}{x}(\partial^{j}_{t}u\partial^{i-j+1}_{t}u+\partial^{j}_{t}\upsilon\partial^{i-j+1}_{t}\upsilon
  +\partial^{j}_{t}\omega\partial^{i-j+1}_{t}\omega)\right]_{x}\nnm\\ &-\sum^{k}_{j=1}C^{j}_{k}\left[\frac{2J}{c ^{2}r_{x}}\frac{\alpha^{2}_{c}(x)}{x}(\partial^{j}_{t}u\partial^{k-j+1}_{t}u+\partial^{j}_{t}\upsilon\partial^{k-j+1}_{t}\upsilon
  +\partial^{j}_{t}\omega\partial^{k-j+1}_{t}\omega)\right]_{x},\nnm\\
  {J}^{k}_{4}&:=\sum^{k}_{i=1}C^{i}_{k}\left\{\partial^{i}_{t}\left[\frac{J\Theta^{2}x}{rr^{2}_{x}}\left(1+\frac{1}{c^{2}}\frac{\rho_{0}}{r_{x}}\frac{x}{r}\Theta\right)\right]
  \frac{\alpha^{2}_{c}(x)}{x^{2}}\partial^{k-i}_{t}\partial_{x}u+\partial^{i}_{t}(\frac{J\Theta^{2}x^{2}}{r^{2}r_{x}})
  \frac{\alpha^{2}_{c}(x)}{x^{2}}\frac{\partial^{k-i}_{t}u}{x}\right\},\nnm\\
  {J}^{k}_{5}&:=\frac{1}{c^{2}}\sum^{k}_{i=0}\sum ^{i}_{j=0}C^{i}_{k}C^{j}_{i}\partial^{k-i}_{t}(\frac{2Jx}{r})\frac{\alpha^{2}_{c}(x)}{x^{2}}(\partial^{j}_{t}u\partial^{i-j+1}_{t}u+\partial^{j}_{t}\upsilon\partial^{i-j+1}_{t}\upsilon +\partial^{j}_{t}\omega\partial^{i-j+1}_{t}\omega)\nnm\\
  &-\frac{1}{c^{2}}\sum^{k}_{i=0}\sum ^{i}_{j=0}C^{i}_{k+1}C^{j}_{i}\partial^{k+1-i}_{t}(\frac{\widetilde{a}_{12}x}{rr^{2}_{x}})\frac{\alpha^{2}_{c}(x)}{x}\partial^{j}_{t}u\partial^{i-j}_{t}\partial_{x}u
  \nnm\\
  &-\frac{1}{c^{2}}\sum^{k}_{i=0}\sum ^{i}_{j=0}C^{i}_{k+1}C^{j}_{i}2\partial^{k+1-i}_{t}(\frac{\widetilde{a}_{12}x^{2}}{r^{2}r_{x}})\frac{\alpha^{2}_{c}(x)}{x}\partial^{j}_{t}u\frac{\partial^{i-j}_{t}u}{x}.\label{TB-5} \end{align}
  Here and in the sequel, $C^{i}_{k}=\frac{k!}{(k-i)!i!}$ and $C^{j}_{i}$ is similarly defined.
 \par For the case of $k=4,$ multiplying \eqref{TB-1} by $\partial^{5}_{t}u,$ integrating the resulting equation over $(0,t)\times(0,1)$ and using the integration by parts show
 \begin{align}\int^{1}_{0}&a_{11}\alpha_{c}(x)\frac{(\partial^{5}_{t}u)^{2}}{2}dx|^{t}_{0}
 +\int^{1}_{0}J\Theta^{2}\frac{\alpha^{2}_{c}(x)}{x}\left[\frac{(\partial^{4}_{t}\partial_{x}u)^{2}}{r^{2}_{x}} +\frac{x}{rr_{x}}\frac{\partial^{4}_{t}u}{x}\partial^{4}_{t}\partial_{x}u
 +\frac{x^{2}}{r^{2}}\frac{(\partial^{4}_{t}u)^{2}}{x^{2}}\right]dx|^{t}_{0}\nnm\\
 &~~+\mathcal {L}_{1}+\frac{1}{c^{2}}\int^{t}_{0}\int^{1}_{0}\frac{2J}{r_{x}}\frac{\alpha^{2}_{c}(x)}{x}
 (\upsilon\partial^{5}_{t}\upsilon+\omega\partial^{5}_{t}\omega)\partial^{5}_{t}\partial_{x}udxd\tau\nnm\\
 &~~~~~~~-\int^{t}_{0}\int^{1}_{0}\partial^{5}_{t}\left(a_{11}\alpha_{c}(x)\frac{\upsilon^{2}}{r}\right)
 \partial^{5}_{t}udxd\tau+\sum^{5}_{l=1}\int^{t}_{0}\int^{1}_{0}\mathcal {J}^{4}_{l}\partial^{5}_{t}udxd\tau=0,\label{TF-1}\end{align}
 where $\mathcal {L}_{1}$ is the remainder term after the integration by parts satifying
 \begin{equation}|\mathcal {L}_{1}|\leq M_{0}(\mathscr{P}(K)+1)\int^{t}_{0}\|(\sqrt{\alpha_{c}(x)}\partial^{5}_{t}u,\frac{\alpha_{c}(x)}{\sqrt{x}}\frac{\partial^{4}_{t}u}{x},\frac{\alpha_{c}(x)}{\sqrt{x}}\partial^{4}_{t}\partial_{x}u)(\tau)\|^{2}_{0}d\tau.\label{TF-3}\end{equation}
 \par We deal with the forth term of \eqref{TF-1}, which can not be controlled by the integration by parts. This difficulty is caused by the relativistic effect of Lorentz factor $\Theta$ and will vanish for the case of the compressible Euler equations \eqref{LCE}. By taking $\partial^{k}_{t}$ over \eqref{C-25}, it follows that \begin{equation}\partial^{k+1}_{t}\upsilon-\frac{1}{c^{2}}b_{11}\frac{\upsilon}{x}\alpha_{c}(x)\partial^{k}_{t}\partial_{x}u-\frac{1}{c^{2}}b_{12}u\upsilon\partial^{k+1}_{t}u=g_{k},\label{SU-1}\end{equation}
 where
 \begin{align}g_{k}&:=\frac{1}{c^{2}}\sum^{k}_{i=1}C^{i}_{k}\left[\partial^{i}_{t}(b_{11}\upsilon)\frac{\alpha_{c}(x)}{x}\partial^{k-i}_{t}\partial_{x}u+\partial^{i}_{t}(b_{12}u\upsilon)\partial^{k+1-i}u\right]\nnm\\
 &+\partial^{k}_{t}\left[\left(\frac{1}{c^{2}}\frac{b_{11}r_{x}\alpha_{c}(x)}{x}-1+\frac{1}{c^{2}}b_{12}\upsilon^{2}\right)\frac{x}{r}\frac{u}{x}\upsilon\right].\label{SU-2}
 \end{align}

\par For $k=4,$ solve $\partial^{5}_{t}\upsilon$ from \eqref{SU-1} and get
\begin{align}&-\frac{1}{c^{2}}\int^{t}_{0}\int^{1}_{0}\frac{2J}{r_{x}}\frac{\alpha^{2}_{c}(x)}{x}
\upsilon\partial^{5}_{t}\upsilon\partial^{5}_{t}\partial_{x}udxd\tau\nnm\\
&=-\frac{1}{c^{2}}\int^{1}_{0}\frac{Jb_{11}}{r_{x}}\frac{\upsilon^{2}}{c^{2}}\frac{\alpha^{3}(x)}{x^{2}}
(\partial^{4}_{t}\partial_{x}u)^{2}dx|^{t}_{0}+\frac{1}{c^{2}}\int^{t}_{0}
\int^{1}_{0}\partial_{t}(\frac{Jb_{11}}{r_{x}}\frac{\upsilon^{2}}{c^{2}}\frac{\alpha^{3}(x)}{x^{2}})
(\partial^{4}_{t}\partial_{x}u)^{2}dxd\tau\nnm\\
&~~+\frac{1}{c^{4}}\int^{t}_{0}\int^{1}_{0}\partial_{x}(\frac{Jb_{12}}{r_{x}}u\upsilon^{2}\frac{\alpha^{2}_{c}(x)}{x})
(\partial^{5}_{t}u)^{2}dxd\tau-\frac{1}{c^{2}}\int^{t}_{0}\int^{1}_{0}\frac{2J}{r_{x}}\frac{\upsilon}{x}\alpha^{2}_{c}(x)
g_{4}\partial^{5}_{t}\partial_{x}udxd\tau.\label{SU-3}
\end{align}
It is easy to see that the second and third terms on the right side of \eqref{SU-3} can be bounded by $M_{0}(\mathscr{P}(K)+1)\int^{t}_{0}\|\left(\sqrt{\alpha_{c}(x)}\partial^{5}_{t}u,\frac{\alpha_{c}(x)}{\sqrt{x}}\partial^{4}_{t}\partial_{x}u\right)(\tau)\|^{2}_{0}d\tau.$
\par  We integrate by parts with respect to $t$ and obtain
\begin{align}&-\frac{1}{c^{2}}\int^{t}_{0}\int^{1}_{0}\frac{2J}{r_{x}}\frac{\upsilon}{x}\alpha^{2}_{c}(x)
g_{4}\partial^{5}_{t}\partial_{x}udxd\tau\nnm\\
&=-\frac{1}{c^{2}}\int^{1}_{0}\frac{2J}{r_{x}}\frac{\upsilon}{x}\alpha^{2}_{c}(x)
g_{4}\partial^{4}_{t}\partial_{x}udx|^{t}_{0}+\frac{1}{c^{2}}\int^{t}_{0}\int^{1}_{0}\partial_{t}(\frac{2J}{r_{x}}\frac{\upsilon}{x}
g_{4})\alpha^{2}_{c}(x)\partial^{4}_{t}\partial_{x}udxd\tau.\label{SU-4}
\end{align}
From \eqref{SU-2},
\begin{align}&\frac{1}{c^{2}}\int^{1}_{0}\frac{2J}{r_{x}}\frac{\upsilon}{x}\alpha^{2}_{c}(x)
g_{4}\partial^{4}_{t}\partial_{x}udx\nnm\\
&=\frac{1}{c^{4}}\sum^{4}_{i=1}C^{i}_{4}\int^{1}_{0}\frac{2J}{r_{x}}\frac{\upsilon}{x}
\left[\partial^{i}_{t}(b_{11}\upsilon)\frac{\alpha_{c}(x)}{x}\partial^{4-i}_{t}\partial_{x}
u+\partial^{i}_{t}(b_{12}u\upsilon)\partial^{5-i}_{t}u\right]\alpha^{2}_{c}(x)\partial^{4}_{t}\partial_{x}udx\nnm\\
&-\int^{1}_{0}\frac{2J}{r_{x}}\frac{\upsilon}{x}\partial^{4}_{t}\left[\left(\frac{1}{c^{2}}
\frac{b_{11}r_{x}\alpha_{c}(x)}{x}-1+\frac{1}{c^{2}}b_{12}\upsilon^{2}\right)
\frac{x}{r}\frac{u}{x}\upsilon\right]\alpha^{2}_{c}(x)\partial^{4}_{t}\partial_{x}udx\nnm\\
&=I_{0}+I_{1}.\label{SU-5}
\end{align}
\par We denote $I_{0}=\sum^{4}_{i=1}I^{i}_{0}$ and estimate it as follows. Using \eqref{U-7}, the H$\ddot{o}$lder inequality and the Cauchy inequality, the fundamental theorem of calculus imply for the  arbitrary  positive constant $\varepsilon,$
\begin{align}I^{1}_{0}&\leq M_{0}\|\mathscr{K}^{1,0}_{t,x}(t)\|_{L^{4}}\left(\|\frac{\alpha_{c}(x)}{\sqrt{x}}\partial^{3}_{t}\partial_{x}u(0)\|_{L^{4}}
+\|\frac{\alpha_{c}(x)}{x}\partial^{4}_{t}u(0)\|_{L^{4}}\right)\|\frac{\alpha_{c}(x)}{\sqrt{x}}\partial^{4}_{t}\partial_{x}u(t)\|_{0}\nnm\\
&+M_{0}K\int^{t}_{0}\left(\|\frac{\alpha_{c}(x)}{\sqrt{x}}\partial^{4}_{t}\partial_{x}u(\tau)\|_{0}
+\|\sqrt{\alpha_{c}(x)}\partial^{5}_{t}u(\tau)\|_{0}\right)d\tau\|\frac{\alpha_{c}(x)}{\sqrt{x}}\partial^{4}_{t}\partial_{x}u(t)\|_{0}\nnm\\
&\leq C(\varepsilon)M_{0}\mathscr{P}(K)\int^{t}_{0}\|(\sqrt{\alpha_{c}(x)}\partial^{5}_{t}u,\frac{\alpha_{c}(x)}{\sqrt{x}}\partial^{4}_{t}\partial_{x}u)(\tau)\|^{2}_{0}d\tau\nnm\\
&+\varepsilon \|\frac{\alpha_{c}(x)}{\sqrt{x}}\partial^{4}_{t}\partial_{x}u(t)\|_{0}+C(\varepsilon)M_{0}\left[\mathscr{P}_{0}+(\mathscr{P}(K)+1)\int^{t}_{0}E(\tau)d\tau \right].\label{SU-6}
\end{align}
From now on, we repeatedly use the fact that for some positive constant $M_{0},$~~$\frac{1}{M_{0}}\alpha_{0}(x)\leq\alpha_{c}(x)\leq M_{0}\alpha_{0}(x).$
Similarly, using \eqref{U-6}, the fundamental theorem of calculous and the $L^{2}-L^{4}-L^{4}$ type H$\ddot{o}$lder's inequality, we can estimate $I^{i}_{0}(i=2,3,4)$
and obtain
\begin{align}|I_{0}|\leq&C(\varepsilon)M_{0}\left[\mathscr{P}_{0}+(\mathscr{P}(K)+1)\int^{t}_{0}E(\tau)d\tau+(\mathscr{P}(K)+1)E(\tau)\int^{t}_{0}E(\tau)d\tau
\right]\nnm\\&+ C(\varepsilon)M_{0}\mathscr{P}(K)\int^{t}_{0}\|(\sqrt{\alpha_{c}(x)}\partial^{5}_{t}u,\frac{\alpha_{c}(x)}{\sqrt{x}}\partial^{4}_{t}\partial_{x}u)(\tau)\|^{2}_{0}d\tau\nnm\\
&~~+\varepsilon \|\frac{\alpha_{c}(x)}{\sqrt{x}}\partial^{4}_{t}\partial_{x}u(t)\|_{0}.\label{SU-7}\end{align}
By the similar way, we also prove that $I_{1}$ has the bound as same as $I_{0}$ and obtain from \eqref{SU-5}
\begin{align}&\frac{1}{c^{2}}\int^{1}_{0}\frac{2J}{r_{x}}\frac{\upsilon}{x}\alpha^{2}_{c}(x)
g_{4}\partial^{4}_{t}\partial_{x}udx\nnm\\
&\leq C(\varepsilon)M_{0}\left[\mathscr{P}_{0}+(\mathscr{P}(K)+1)\int^{t}_{0}E(\tau)d\tau+(\mathscr{P}(K)+1)E(\tau)\int^{t}_{0}E(\tau)d\tau
\right]\nnm\\
&+ C(\varepsilon)M_{0}\mathscr{P}(K)\int^{t}_{0}\|(\sqrt{\alpha_{c}(x)}\partial^{5}_{t}u,\frac{\alpha_{c}(x)}{\sqrt{x}}\partial^{4}_{t}\partial_{x}u)(\tau)\|^{2}_{0}d\tau\nnm\\
&~~+\varepsilon \|\frac{\alpha_{c}(x)}{\sqrt{x}}\partial^{4}_{t}\partial_{x}u(t)\|_{0}.\label{SU-8}\end{align}
\par We also control the second term on the right side of \eqref{SU-4} and have

\begin{align}&-\frac{1}{c^{2}}\int^{t}_{0}\int^{1}_{0}\frac{2J}{r_{x}}\frac{\alpha^{2}_{c}(x)}{x}
\upsilon\partial^{5}_{t}\upsilon\partial^{5}_{t}\partial_{x}udxd\tau\nnm\\
&\geq-\frac{1}{c^{2}}\int^{1}_{0}\frac{Jb_{11}}{r_{x}}\frac{\upsilon^{2}}{c^{2}}\frac{\alpha^{3}(x)}{x^{2}}
(\partial^{4}_{t}\partial_{x}u)^{2}dx|^{t}_{0}-\mathcal {L}_{2},\label{SU-12}\end{align}
where
\begin{align}
\mathcal {L}_{2}&\leq C(\varepsilon)M_{0}\left[\mathscr{P}_{0}+(\mathscr{P}(K)+1)\int^{t}_{0}(E^{2}(t)+E(\tau))d\tau+(\mathscr{P}(K)+1)E(\tau)\int^{t}_{0}E(\tau)d\tau
\right]\nnm\\
& +C(\varepsilon)M_{0}\mathscr{P}(K)\int^{t}_{0}\|(\sqrt{\alpha_{c}(x)}\partial^{5}_{t}u,\frac{\alpha_{c}(x)}{\sqrt{x}}\partial^{4}_{t}\partial_{x}u)(\tau)\|^{2}_{0}d\tau+\varepsilon \|(\frac{\alpha_{c}(x)}{\sqrt{x}}\frac{\partial^{4}_{t}u}{x},\frac{\alpha_{c}(x)}{\sqrt{x}}\partial^{4}_{t}\partial_{x}u)(t)\|_{0}\nnm\\
&+M_{0}(\mathscr{P}(K)+1)\int^{t}_{0}\|(\sqrt{\alpha_{c}(x)}\partial^{5}_{t}u,\frac{\alpha_{c}(x)}{\sqrt{x}}\frac{\partial^{4}_{t}
u}{x},\frac{\alpha_{c}(x)}{\sqrt{x}}\partial^{4}_{t}\partial_{x}u)(\tau)\|^{2}_{0}d\tau\nnm\\
&+M_{0}(\mathscr{P}(K)+1)\int^{t}_{0}\|(\sqrt{\alpha_{c}(x)}\partial^{5}_{t}\upsilon,\sqrt{\alpha_{c}(x)}\partial^{5}_{t}\omega)(\tau)\|^{2}_{0}d\tau
\label{LL22}.
\end{align}

Using \eqref{C-26}, an argument similar to \eqref{SU-12} gives
\begin{align}&-\frac{1}{c^{2}}\int^{t}_{0}\int^{1}_{0}\frac{2J}{r_{x}}\frac{\alpha^{2}_{c}(x)}{x}
\omega\partial^{5}_{t}\omega\partial^{5}_{t}\partial_{x}udxd\tau\nnm\\
&\geq-\frac{1}{c^{2}}\int^{1}_{0}\frac{Jb_{21}}{r_{x}}\frac{\omega^{2}}{c^{2}}\frac{\alpha^{3}_{c}(x)}{x^{2}}
(\partial^{4}_{t}\partial_{x}u)^{2}dx|^{t}_{0}-\mathcal {L}_{2},\nnm
\nnm\end{align}
which in combination with \eqref{SU-12} completes the estimate for the forth term of \eqref{TF-1}.
\par Using the chain rule, an argument similar to the proof of Lemma \ref{lemma2.1} yields
\begin{align}&\|\sqrt{\alpha_{c}(x)}\partial^{5}_{t}a_{11}(t)\|_{0}\nnm\\&\leq M_{0}\|(\sqrt{\alpha_{c}(x)}\partial^{5}_{t}u,\frac{\alpha_{c}(x)}{\sqrt{x}}\frac{\partial^{4}_{t}u}{x},\frac{\alpha_{c}(x)}{\sqrt{x}}\partial^{4}_{t}\partial_{x}u,\sqrt{\alpha_{c}(x)}\partial^{5}_{t}\upsilon,\sqrt{\alpha_{c}(x)}\partial^{5}_{t}\omega)(t)\|^{2}_{0}\nnm\\
&+M_{0}\mathscr{P}(K)\sqrt{E(t)}.\nnm\end{align}
A straightforward computation gives
\begin{align}
&|\partial^{5}_{t}(\frac{x}{r})|\leq M_{0}\left[K^{5}+K^{4}+K^{3}+K^{2}+(K^{3}+K^{2})|\frac{\partial_{t}u}{x}|\right]\nnm\\
&~~~~~~~~+M_{0}\left[(K^{2}+K)|\frac{\partial^{2}_{t}u}{x}|+K|\frac{\partial^{3}_{t}u}{x}|+|\frac{\partial^{4}_{t}u}{x}|\right].\nnm
\end{align}
Then, using \eqref{1.15}-\eqref{XXX4}, Lemma \ref{lemma2.1}, the H$\ddot{o}$lder inequality and the Cauchy inequality, the fundamental theorem of calculus, we have

\begin{align}&-\int^{t}_{0}\int^{1}_{0}\partial^{5}_{t}\left(a_{11}\alpha_{c}(x)\frac{\upsilon^{2}}{r}\right)
\partial^{5}_{t}udxd\tau\nnm\\
&\leq M_{0}(\mathscr{P}(K)+1)\int^{t}_{0}\|(\sqrt{\alpha_{c}(x)}\partial^{5}_{t}u,\frac{\alpha_{c}(x)}{\sqrt{x}}\frac{\partial^{4}_{t}u}{x},\frac{\alpha_{c}(x)}{\sqrt{x}}\partial^{4}_{t}\partial_{x}u)(\tau)\|^{2}_{0}d\tau\nnm\\
&+M_{0}(\mathscr{P}(K)+1)\int^{t}_{0}\|(\sqrt{\alpha_{c}(x)}\partial^{5}_{t}\upsilon,\sqrt{\alpha_{c}(x)}\partial^{5}_{t}\omega)(\tau)\|^{2}_{0}d\tau\nnm\\
&+M_{0}\int^{t}_{0}E(\tau)d\tau\label{TF1-1}.
\end{align}
\par We estimate the last term of \eqref{TB-1}. Similar to \eqref{TF1-1},
$\int^{t}_{0}\int^{1}_{0}(\mathcal {J}^{4}_{1}+\mathcal {J}^{4}_{5})\partial^{5}_{t}udxd\tau$ is also bounded by the right side of \eqref{TF1-1}. For $\mathcal {J}^{4}_{2}$, using the integration by parts with respect to $x$ and  with respect to $t$ show
\begin{align}&\int^{t}_{0}\int^{1}_{0}\mathcal {J}^{4}_{2}\partial^{5}_{t}u(t)dxd\tau\nnm\\
&=-\sum^{4}_{i=1}C^{i}_{4}\int^{1}_{0}\left[\partial^{i}_{t}(\frac{J\Theta^{2}}{r^{2}_{x}})\frac{\alpha^{2}_{c}(x)}{x}2\partial^{4-i}\partial_{x}u
+\partial^{i}_{t}(\frac{J\Theta^{2}x}{rr^{2}_{x}})\frac{\alpha^{2}_{c}(x)}{x}\frac{\partial^{4-i}_{t}u}{x}\right]\partial^{4}_{t}\partial_{x}u(t)dx|^{t}_{0}\nnm\\
&-\sum^{4}_{i=1}C^{i}_{4}\int^{t}_{0}\int^{1}_{0}\partial_{t}\left[\partial^{i}_{t}(\frac{J\Theta^{2}}{r^{2}_{x}})\frac{\alpha^{2}_{c}(x)}{x}2\partial^{4-i}\partial_{x}u
+\partial^{i}_{t}(\frac{J\Theta^{2}x}{rr^{2}_{x}})\frac{\alpha^{2}_{c}(x)}{x}\frac{\partial^{4-i}_{t}u}{x}\right]\partial^{4}_{t}\partial_{x}u(t)dxd\tau\nnm\\
&=I_{ 4}|^{t}_{0}+I_{5}.\label{TF1-4}\end{align}
Then, the similar analysis to ${J}^{4}_{2}$ shows that
$\int^{t}_{0}\int^{1}_{0}(\mathcal {J}^{4}_{2}+\mathcal {J}^{4}_{3}+\mathcal {J}^{4}_{4})\partial^{5}_{t}u(t)dxd\tau$ can be bounded by the right hands side of \eqref{LL22}.

\par On the other hand, it follows from \eqref{C-25} and \eqref{C-26} that
\begin{align}
\|\sqrt{\alpha_{c}(x)}(\partial^{5}_{t}\upsilon,\partial^{5}_{t}\omega)(t)\|_{0}&\leq M_{0}\left[\|(\sqrt{\alpha_{c}(x)}\partial^{5}_{t}u,\frac{\alpha_{c}(x)}{\sqrt{x}}\frac{\partial^{4}_{t}
u}{x},\frac{\alpha_{c}(x)}{\sqrt{x}}\partial^{4}_{t}\partial_{x}u)(t)\|\right] \nnm\\
&+M_{0}(\mathscr{P}(K)+1)\sqrt{E(t)}.\label{TF1-5}
\end{align}

\par Using \eqref{1.15},\eqref{XXX1} and \eqref{CCC1} ,  there exist a positive constant \begin{equation}\overline{c}_{1}=\overline{c}_{1}(\|\rho_{0}\|_{L^{\infty}},\|{\mathbf{v}}_{0}\|^{2}_{L^{\infty}}\})>\overline{c}\nnm\end{equation} such that for any $c\geq \overline{c}_{1},$
\begin{equation}\frac{1}{4}-\frac{1}{c^{2}}\frac{r_{x}}{\widetilde{\theta}^{2}}\frac{\alpha_{c}(x)}{x}\left(b_{11}\frac{\upsilon^{2}}{x^{2}}+b_{12}\frac{\omega^{2}}{x^{2}}\right)\geq0.\label{mai4}\end{equation}
Therefore, \eqref{TF-1} together with  \eqref{TF-3}, \eqref{SU-12},  \eqref{TF1-1}, \eqref{TF1-5} and \eqref{mai4} yields for any $c\geq \overline{c}_{1}$

\begin{align}\int^{1}_{0}&a_{11}\alpha_{c}(x)\frac{(\partial^{5}_{t}u)^{2}}{2}dx|^{t}_{0}
+\int^{1}_{0}J\Theta^{2}\frac{\alpha^{2}_{c}(x)}{x}\left[\frac{1}{8}\frac{(\partial^{4}_{t}\partial_{x}u)^{2}}{r^{2}_{x}}
+\frac{1}{4}\frac{x^{2}}{r^{2}}\frac{(\partial^{4}_{t}u)^{2}}{x^{2}}\right]dx|^{t}_{0}\nnm\\
&+\int^{1}_{0}J\Theta^{2}\frac{\alpha^{2}_{c}(x)}{x}\left(\frac{\partial^{4}_{t}\partial_{x}u}{r_{x}}
+\frac{x}{r}\frac{\partial^{4}_{t}u}{x}\right)^{2}dx|^{t}_{0}\nnm\\
&\leq M_{0}\left[\mathscr{P}_{0}+(\mathscr{P}(K)+1)\int^{t}_{0}(E^{2}(\tau)+E(\tau))d\tau+(\mathscr{P}(K)+1)E(t)\int^{t}_{0}E(\tau)d\tau
\right]\nnm\\
&~~+M_{0}(\mathscr{P}(K)+1)\int^{t}_{0}\|(\sqrt{\alpha_{c}(x)}\partial^{5}_{t}u,\frac{\alpha_{c}(x)}{\sqrt{x}}\frac{\partial^{4}_{t}
u}{x},\frac{\alpha_{c}(x)}{\sqrt{x}}\partial^{4}_{t}\partial_{x}u)(\tau)\|^{2}_{0}d\tau.\nnm
\end{align}
Then, for any $c\geq \overline{c}_{1}$ and small enough $0<\overline{T}_{1}\leq \overline{T}$, we can obtain \eqref{mai5} using the Gronwall inequality. We easily observe that the order of polynomial function $\mathscr{P}(K)$ is $10.$ This completes the proof of Lemma \ref{PPPVV1}.\end{proof}

\section{Elliptic Estimates for the case $\gamma=2$}\label{sec-4}
In this section, we establish the higher order spatial derivatives of local smooth solutions to the free boundary value problem~\eqref{C-24}-\eqref{L1.2}  on $[0,1]\times[0,T]$
    under the assumption \eqref{1.15}.
 Since the different singularities of the original point $x=0$ and the boundary point $x=1$ , we divide our estimates of each terms into the interior estimates and the boundary estimates. More precisely, we give the estimates for $u,\partial_{t}u$ in subsection \ref{sec-41}, the estimates for $\partial^{2}_{t}u$ in subsection \ref{sec-42} and for $\partial^{3}_{t}u$ in subsection \ref{sec-43}, respectively. Finally, we give the estimates of $E(t)$ in subsection \ref{E(t)}.
\par We can rewrite \eqref{TB-1} as
\begin{align}&\alpha_{0}(x)\partial^{k}_{t}\partial^{2}_{x}u+\alpha'_{0}(x)\partial^{k}_{t}\partial_{x}u-\alpha'_{0}(x)\frac{\partial^{k}_{t}u}{x}=
-(\alpha'_{0}(x)-\frac{\alpha_{0}(x)}{x})\partial^{k}_{t}\partial_{x}u
+\mathcal {F}^{k},\label{Ellip-1}\end{align}
with
\begin{align}\mathcal {F}^{k}&:=\mathcal {J}^{k}_{9}+h_{1} \rho_{0}\partial^{k}_{t}\partial_{x}u- h_{2}\partial^{k}_{t}u+\frac{(1-\frac{1}{c^{2}}\frac{\rho_{0}}{r_{x}}\frac{x}{r}\Theta)^{3}}
{2\Theta^{2}}\Theta_{0}(1+\frac{\rho_{0}}{c^{2}})\sum^{8}_{l=1}\mathcal {J}^{k}_{l},\label{Ellip-10}
\end{align}
where $\alpha_{0}(x)=\rho_{0}x,$ $J^{k}_{l}(l=1,...,5)$ are given by \eqref{TB-5}, respectively, and $h_{i}(i=1,2),~J^{k}_{l}(l=6,7,8)$ satisfy
\begin{align}h_{1}&:=1+\frac{[\Theta_{0}(1+\frac{\rho_{0}}{c^{2}})]_{x}}{\Theta_{0}(1+\frac{\rho_{0}}{c^{2}})}x+\frac{(1-\frac{1}{c^{2}}
\frac{\rho_{0}}{r_{x}}\frac{x}{r}\Theta)^{3}}{2\Theta^{2}}\left[\frac{\Theta^{2}}{(1-\frac{1}{c^{2}}\frac{\rho_{0}}{r_{x}}\frac{x}{r}\Theta)^{3}}\right]_{x}x,\label{Ellip-2}\\ h_{2}&:=2\Theta_{0}(1+\frac{\rho_{0}}{c^{2}})[\frac{\rho_{0}}{\Theta_{0}(1-\frac{\rho_{0}}{c^{2}})}]_{x}(2+\frac{1}{c^{2}}\frac{\rho_{0}}{r_{x}}\frac{x}{r}\Theta)+\frac{[\Theta_{0}(1+\frac{\rho_{0}}{c^{2}})]_{x}}{\Theta_{0}(1+\frac{\rho_{0}}{c^{2}})}\rho_{0}\nnm\\
&+\frac{(1-\frac{1}{c^{2}}\frac{\rho_{0}}{r_{x}}\frac{x}{r}\Theta)^{3}}{2\Theta^{2}}
\left[\frac{\theta^{2}}{(1-\frac{1}{c^{2}}\frac{\rho_{0}}{r_{x}}\frac{x}{r}\Theta)^{3}}\right]_{x}
(\frac{1}{c^{2}}\frac{\rho_{0}}{r_{x}}\frac{x}{r}\Theta)\rho_{0}+\frac{1}{2}(\frac{1}{c^{2}}
\frac{\rho_{0}}{r_{x}}\frac{x}{r}\Theta)_{x}\rho_{0},\label{Ellip-3}\\
\mathcal {J}^{k}_{6}&:=\left[\frac{2J}{c ^{2}r_{x}}\frac{\alpha^{2}_{c}(x)}{x}(u\partial^{k+1}_{t}u+\upsilon\partial^{k+1}_{t}\upsilon
+\omega\partial^{k+1}_{t}\omega)\right]_{x},\label{Ellip-4}\\
\mathcal{J}^{k}_{7}&:=-\left\{\frac{\Theta^{2}}{(1-\frac{1}{c^{2}}\frac{\rho_{0}}{r_{x}}
\frac{x}{r}\Theta)^{3}}\frac{\alpha^{2}_{c}(x)}{x}\left[(1-\frac{x}{rr^{3}_{x}})2\partial^{k}_{t}\partial_{x}
u+(1-\frac{x^{2}}{r^{2}r^{2}_{x}})(1+\frac{1}{c^{2}}\frac{\rho_{0}}{r_{x}}\frac{x}{r}
\Theta)\frac{\partial^{k}_{t}u}{x}\right]\right\}_{x},\label{Ellip-5}\\
\mathcal{J}^{k}_{8}&:=-\frac{\Theta^{2}}{(1-\frac{1}{c^{2}}\frac{\rho_{0}}{r_{x}}
\frac{x}{r}\Theta)^{3}}\frac{\alpha^{2}_{c}(x)}{x^{2}}\left[(1-\frac{x^{2}}{r^{2}r^{2}_{x}})(1+\frac{1}{c^{2}}\frac{\rho_{0}}{r_{x}}\frac{x}{r}
\Theta)\partial^{k}_{t}\partial_{x}
u+(1-\frac{x^{3}}{r^{3}r_{x}})\frac{\partial^{k}_{t}u}{x}\right],\label{Ellip-6}\\
\mathcal {J}^{k}_{9}&:=\frac{(1-\frac{1}{c^{2}}\frac{\rho_{0}}{r_{x}}\frac{x}{r}\Theta)^{3}}{2\Theta^{2}}\Theta_{0}(1+\frac{\rho_{0}}{c^{2}})\left[\widetilde{a}_{11}x\partial^{k+2}_{t}u-\partial^{k+1}_{t}\left(\widetilde{a}_{11}x\frac{\upsilon^{2}}{r}\right)\right]\label{Elliptic-100}.
\end{align}
\par We first determine the constant $\delta$ in \eqref{cut-1} and \eqref{cut-2}. Because $\rho(0)>0$ and $\alpha'_{0}(0)=\rho_{0}(0)>0,$ then there exist positive constant $\delta_{0}$ such that for any $x\in (0,\delta_{0}),$
\begin{equation} \frac{\rho_{0}(0)}{2}\leq \alpha'_{0}(x)\leq\frac{3\rho_{0}(0)}{2}.\label{cut-3}\end{equation}
Then, we take $\delta$ as $0<2\delta\leq \delta_{0}.$

\subsection{Estimates for $u,\partial_{t}u$}\label{sec-41}
\begin{lemma}\label{PPVV2} Let $(r,u,\upsilon,\omega)$ be a classical solution to the free boundary problem \eqref{C-24}--\eqref{L1.2} satisfying \eqref{1.15} on $[0,1]\times[0,T]$. Then, for any $t\in(0,\overline{T}_{1}]$ and $c\geq \overline{c}_{1}$ the following estimates hold
\begin{align}&\|\xi\alpha_{0}(x)x\partial_{t}\partial^{3}_{x}(\frac{u}{x})\|^{2}_{0}+\|\xi\alpha_{0}(x)\partial_{t}\partial^{2}_{x}(\frac{u}{x})\|^{2}_{0}+\|\xi\alpha'_{0}(x)\partial_{t}\partial_{x}(\frac{u}{x})\|^{2}_{0}\nnm\\
&\leq \mathscr{P}_{0}+M_{0}\left(\mathscr{P}_{4}(K)+1\right)\int^{t}_{0}\left(E(\tau)+E^{2}(\tau)\right)d\tau\nnm\\
&+M_{0}\left(\mathscr{P}_{4}(K)+1\right)E(t)\int^{t}_{0}E(\tau)d,\label{BEEE7}\\
&\|\chi\alpha^{\frac{3}{2}}_{0}(x)\partial_{t}\partial^{3}_{x}u\|^{2}_{0}+\|\chi\alpha^{\frac{1}{2}}_{0}(x)\alpha'_{0}(x)\partial^{2}_{x}u\|^{2}_{0}\nnm\\
&\leq \mathscr{P}_{0}+M_{0}\left(\mathscr{P}_{4}(K)+1\right)\int^{t}_{0}\left(E(\tau)+E^{2}(\tau)\right)d\tau\nnm\\
&+M_{0}\left(\mathscr{P}_{4}(K)+1\right)E(t)\int^{t}_{0}E(\tau)d\tau+M_{0}\|\chi\alpha^{1/2}_{0}(x)\partial^{3}_{t}\partial_{x}u\|^{2}_{0},\label{BBEEE6}\end{align}
and
\begin{align}
&\|\chi\alpha_{0}(x)\partial^{3}_{x}u\|^{2}_{0}+\|\chi\alpha'_{0}(x)\partial^{2}_{x}u\|^{2}_{0}\nnm\\
&\leq \mathscr{P}_{0}+M_{0}\left(\mathscr{P}_{4}(K)+1\right)\int^{t}_{0}\left(E(\tau)+E^{2}(\tau)\right)d\tau\nnm\\
&+M_{0}\left(\mathscr{P}_{4}(K)+1\right)E(t)\int^{t}_{0}E(\tau)d\tau+M_{0}\|\chi\partial^{2}_{t}\partial_{x}u\|.\label{BBEEE5}
\end{align}
\end{lemma}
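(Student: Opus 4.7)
The plan is to use the rewritten form \eqref{Ellip-1} as the central elliptic equation and split the estimates into the interior region ($\xi$-cutoff near $x=0$) and the boundary region ($\chi$-cutoff near $x=1$), since the degeneracy of $\alpha_0$ is fundamentally different at the two endpoints. Specifically, I would take $k=1$ in \eqref{Ellip-1} to derive \eqref{BEEE7} and \eqref{BBEEE6}, and take $k=0$ in \eqref{Ellip-1} to derive \eqref{BBEEE5}. In each case the right-hand side $\mathcal{F}^k$ will be controlled through Lemma \ref{lemma2.1}, the bounds \eqref{E(u)-1}--\eqref{E(u)-2}, and the a priori assumption \eqref{1.15}, producing the polynomial factors $\mathscr{P}_4(K)$ and the structure $\mathscr{P}_0+\int_0^t(E+E^2)d\tau+E(t)\int_0^tE\,d\tau$ that appears on the right of each estimate.

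For the interior estimate \eqref{BEEE7}, near $x=0$ I would change the unknown to $w:=\partial_t u/x$. Since $\partial_t u(0,t)=0$ and $u/x$ appears as a natural regular variable in the whole paper, one has $\partial_t\partial_x u=w+xw_x$ and $\partial_t\partial_x^2 u=2w_x+xw_{xx}$. Substituting into \eqref{Ellip-1} with $k=1$ the left-hand side rearranges into $x\{\alpha_0\,\partial_x^2 w+(3\rho_0+x\rho_0')\partial_x w\}$, a non-degenerate second-order operator for $w$ on $\mathrm{supp}\,\xi$, with lower-order coefficient bounded below by $\tfrac{3\rho_0(0)}{2}$ thanks to \eqref{cut-3}. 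I would then apply $\partial_x$ twice, multiply by $\xi^2 x^2\partial_x^3 w$, integrate by parts in $x$, and absorb the top-order term $\|\xi\alpha_0 x\partial_t\partial_x^3(u/x)\|_0^2$ on the left using the positivity of the coefficient. Lower-order terms $\|\xi\alpha_0\partial_t\partial_x^2(u/x)\|_0^2$ and $\|\xi\alpha_0'\partial_t\partial_x(u/x)\|_0^2$ follow by standard one-dimensional elliptic estimates since $\alpha_0'\geq \tfrac{\rho_0(0)}{2}$ on $[0,2\delta]$.

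For the two boundary estimates \eqref{BBEEE6} and \eqref{BBEEE5}, I would exploit that near $x=1$ the physical vacuum condition $\eqref{L1.2}_2$ gives $\alpha_0(x)\sim (1-x)$ and $|\alpha_0'(x)|\geq \tfrac{|\rho_0'(1)|}{2}$. For \eqref{BBEEE5} take $k=0$, apply $\partial_x$ to \eqref{Ellip-1}, multiply by $\chi^2\alpha_0\partial_x^3 u$, and integrate by parts in $x$. The degenerate top term $\|\chi\alpha_0\partial_x^3 u\|_0^2$ is absorbed on the left; the term $\|\chi\alpha_0'\partial_x^2 u\|_0^2$ is recovered using the sign/lower-bound of $\alpha_0'$ near $x=1$. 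The contribution from $\alpha_0\partial_t^2 u$ inside $\mathcal{J}^0_9$ produces exactly the admissible term $\|\chi\partial_t^2\partial_x u\|_0$ on the right-hand side. The argument for \eqref{BBEEE6} is parallel, starting from $k=1$ with the extra weight $\alpha_0^{1/2}$: multiply the $\partial_x$-differentiated equation by $\chi^2\alpha_0^2\partial_t\partial_x^3 u$ and integrate; here the $\partial_t\partial_t^2 u=\partial_t^3 u$ contribution from $\mathcal{J}^1_9$ yields the term $\|\chi\alpha_0^{1/2}\partial_t^3\partial_x u\|_0^2$ on the right. All remaining contributions from $\mathcal{J}^k_1,\dots,\mathcal{J}^k_8$ and $h_1,h_2$ are handled by the chain rule bounds in Lemma \ref{lemma2.1}, which packs them into factors of $\mathscr{P}_4(K)$ times $E(t)$ or $E(t)^2$.

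The main technical obstacle will be the cross-terms generated by integration by parts, in particular the first-derivative terms in $\mathcal{F}^k$ of the form $[\ldots]_x$ in $\mathcal{J}^k_2,\mathcal{J}^k_3,\mathcal{J}^k_7$, which when paired against the top multiplier need to be transferred so that only weights of the form $\alpha_0^{3/2},\alpha_0',\alpha_0$ remain. Near $x=0$ one must also ensure that the singular factor $1/x$ hidden in $\mathcal{J}^k_8$ and in the transformation to $w=\partial_t u/x$ is exactly canceled by the regularity of $u/x$ provided by the energy functional $E(t)$ via \eqref{WS2}; this balance is delicate because $\alpha_0'-\alpha_0/x$ does not vanish at $x=0$. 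Near $x=1$, the difficulty is that integration by parts against $\chi^2\alpha_0^\beta\partial_x^{3}(\cdot)$ produces terms involving $\alpha_0''$ and $(\alpha_0^{\beta})'$, which are only controlled using the precise weighted Hardy estimates \eqref{WS2}--\eqref{WS3}, and this is where keeping track of exactly which powers of $\alpha_0$ and $x$ must appear in $E(t)$ becomes essential.
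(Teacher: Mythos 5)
Your proposal follows essentially the same route as the paper: both estimates come from the elliptic identity \eqref{Ellip-1} with $k=1$ (and $k=0$ for \eqref{BBEEE5}), split by the cutoffs $\xi,\chi$, with the interior estimate obtained by rewriting everything in terms of derivatives of $\partial_t u/x$ via \eqref{Formula} and the sign of $\alpha_0'$ near $x=0$ from \eqref{cut-3}, the boundary estimate by weighting with $\sqrt{\alpha_0(x)}$ and using $\alpha_0'<0$ near $x=1$, and the source terms (in particular the $\partial_t^{k+2}u$ contribution of $\mathcal{J}^k_9$, which produces the extra terms $\|\chi\alpha_0^{1/2}\partial_t^3\partial_x u\|_0^2$ and $\|\chi\partial_t^2\partial_x u\|$) controlled through Lemma \ref{lemma2.1}. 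The only difference is cosmetic: you test the differentiated equation against the top-order multiplier, whereas the paper takes the $L^2$-norm of the whole left-hand side and extracts coercivity from the expansion of the square by integration by parts.
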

\begin{proof}
Using
\begin{equation}\partial^{k}_{t}\partial^{j}_{x}u:=x\partial^{j}_{x}(\frac{\partial^{k}_{t}u}{x})+j\partial^{j-1}_{x}(\frac{\partial^{k}_{t}u}{x}),~j=1,2,...\label{Formula}\end{equation}
we obtain from \eqref{Ellip-1}
\begin{align}\alpha_{0}(x)&x\partial^{k}_{t}\partial^{3}_{x}(\frac{u}{x})+5\alpha_{0}(x)\partial^{k}_{t}\partial^{2}_{x}(\frac{u}{x})
+3\alpha'_{0}(x)\partial^{k}_{t}\partial_{x}(\frac{u}{x})=-2\rho_{0x}x^{2}\partial^{k}_{t}\partial_{x}(\frac{u}{x})\nnm\\
&-2\alpha''_{0}(x)\partial^{k}_{t}\partial_{x}u-\alpha''_{0}(x)\frac{\partial^{k}_{t}u}{x}-\left[(\alpha'_{0}(x)-\frac{\alpha_{0}(x)}{x})\partial^{k}_{t}\partial_{x}u\right]_{x}+\mathcal {F}^{k}_{x}.\label{EllipticQ-2}
\end{align}
\textbf{Interior Estimate}.
Multiplying \eqref{EllipticQ-2} by $\xi$ and taking $L^{2}-$ norm, for $k=1$

\begin{align}&\|\xi\left[\alpha_{0}(x)x\partial_{t}\partial^{3}_{x}(\frac{u}{x})+5\alpha_{0}(x)\partial_{t}\partial^{2}_{x}(\frac{u}{x})
+3\alpha'_{0}(x)\partial_{t}\partial_{x}(\frac{u}{x})\right]\|^{2}_{0}\nnm\\
&\leq\|\xi\left\{-\rho_{0x}x^{2}\partial_{t}\partial_{x}(\frac{u}{x})-2\alpha''_{0}(x)\partial_{t}\partial_{x}u-\alpha''_{0}(x)\frac{\partial_{t}u}{x}-\left[(\alpha'_{0}(x)-\frac{\alpha_{0}(x)}{x})\partial_{t}\partial_{x}u\right]_{x}\right\}\|^{2}_{0}
\nnm\\&~~~+\|\xi\mathcal {F}^{1}_{x}\|^{2}_{0}.\label{EllipticQ-3}
\end{align}
Using integrating by parts and Cauchy-Schwarz inequality yields for any positive constant $\varepsilon,$
\begin{align}
&\|\xi\left[\alpha_{0}(x)x\partial_{t}\partial^{3}_{x}(\frac{u}{x})+5\alpha_{0}(x)\partial_{t}\partial^{2}_{x}(\frac{u}{x})
+3\alpha'_{0}(x)\partial_{t}\partial_{x}(\frac{u}{x})\right]\|^{2}_{0}\nnm\\
&~~\geq \|\xi\alpha_{0}(x)x\partial_{t}\partial^{3}_{x}(\frac{u}{x})\|^{2}_{0}+3\|\xi\alpha_{0}(x)\partial_{t}\partial^{2}_{x}(\frac{u}{x})\|^{2}_{0}-\varepsilon\|\xi\alpha'_{0}(x)\partial_{t}\partial_{x}(\frac{u}{x})\|^{2}_{0}
\nnm\\
&~~~-M_{0}(\varepsilon,\delta)\left[\mathscr{P}_{0}+(\mathscr{P}(K)+1)\int^{t}_{0}E(\tau)d\tau\right],\label{EllipticQ-4}
\end{align}
where we have used \eqref{cut-3}.
\par For the estimates of the right hands side in \eqref{EllipticQ-3}, the first term can be easily estimated by \begin{equation}M_{0}\left[\mathscr{P}_{0}+(\mathscr{P}(K)+1)\int^{t}_{0}E(\tau)d\tau\right].\nnm\end{equation}
 For the second term $\|\xi\mathcal {F}^{1}_{x}\|^{2}_{0},$ the highest order terms with respect to $t$ and $x$ are $\|\xi \partial_{x}\mathcal {J}^{1}_{9}\|^{2}_{0}$ and $\|\xi\left[\frac{(1-\frac{1}{c^{2}}\frac{\rho_{0}}{r_{x}}\frac{x}{r}\Theta)^{3}}
{2\Theta^{2}}\Theta_{0}(1+\frac{\rho_{0}}{c^{2}})\frac{x}{\alpha_{c}(x)}\mathcal {J}^{1}_{7}\right]_{x}\|^{2}_{0}$ respectively. Thus, we only give the estimates for these two terms while the other terms in $\|\xi\mathcal {F}^{1}_{x}\|^{2}_{0}$ can be similarly estimated and bounded by the right hand side of \eqref{BEEE7}. Using \eqref{Elliptic-100}, we have
\begin{align}\|\xi \partial_{x}\mathcal {J}^{1}_{9}\|^{2}_{0}
&\leq M_{0}\left[(1+\|\alpha_{0}(x)\mathscr{K}^{0,1}_{t,x}\|^{2}_{L^{\infty}})\|\partial^{3}_{t}u\|^{2}_{0}+\|\xi\alpha_{0}(x)\partial^{3}_{t}\partial_{x}u\|^{2}_{0}\right]\nnm\\
&+M_{0}\|\frac{\upsilon}{x}\|^{2}_{L^{\infty}}\left(\|\xi\mathscr{K}^{2,0}_{t,x}\|^{2}_{0}\|\alpha_{0}(x)\upsilon_{x}\|^{2}_{L^{\infty}}+\|\xi\alpha_{0}(x)\mathscr{K}^{1,1}_{t,x}\|^{2}_{0}\|\partial_{t}\upsilon\|^{2}_{L^{\infty}}\right)\nnm\\
&+M_{0}\left(\|\alpha_{0}(x)\partial_{t}\partial_{x}\upsilon\|^{2}_{0}\|\xi\mathscr{K}^{1,0}_{t,x}\|^{2}_{L^{\infty}}+\|\frac{\upsilon}{x}\|^{2}_{L^{\infty}}\|\alpha_{0}(x)\partial_{t}\partial^{2}_{x}\upsilon\|^{2}_{0}\right)\nnm\\
&+M_{0}\|\alpha_{0}(x)\mathscr{K}^{0,1}_{t,x}\|^{2}_{L^{\infty}}\left(\|\partial^{2}_{t}\upsilon\|^{2}_{0}\|\frac{\upsilon}{x}\|^{2}_{L^{\infty}}
+\|\partial_{t}\upsilon\|^{2}_{0}\|\frac{\partial_{t}\upsilon}{x}\|^{2}_{L^{\infty}}\right)\nnm\\
&+M_{0}\|\xi\alpha_{0}(x)\mathscr{K}^{2,1}_{t,x}\|^{2}_{0}\|\frac{\upsilon}{x}\|^{2}_{L^{\infty}}.\label{EHU1}
\end{align}
From \eqref{Ellip-5},
\begin{align}
&\|\xi\left[\frac{(1-\frac{1}{c^{2}}\frac{\rho_{0}}{r_{x}}\frac{x}{r}\Theta)^{3}}
{2\Theta^{2}}\Theta_{0}(1+\frac{\rho_{0}}{c^{2}})\frac{x}{\alpha_{c}(x)}\mathcal {J}^{1}_{7}\right]_{x}\|^{2}_{0}\nnm\\
&\leq M_{0}\|\xi\left(\partial_{t}\partial_{x}u,\frac{\partial_{t}u}{x}\right)\|^{2}_{L^{\infty}}\|\alpha_{0}(x)\mathscr{K}^{0,1}_{t,x}\|^{2}_{L^{\infty}}
\|\mathscr{K}^{0,1}_{t,x}\|^{2}_{0}\int^{t}_{0}\|\left(\partial_{x}u,\frac{u}{x}\right)(\tau)\|^{2}_{L^{\infty}}d\tau\nnm\\
&+M_{0}\|\alpha_{0}(\cdot)\mathscr{K}^{0,2}_{t,x}\|^{2}_{0}\|\xi\left(\partial_{t}\partial_{x}u,\frac{\partial_{t}u}{x}\right)\|^{2}_{L^{\infty}}
\int^{t}_{0}\|\left(\partial_{x}u,\frac{u}{x}\right)(\tau)\|^{2}_{L^{\infty}}d\tau\nnm\\
&+M_{0}\|\xi\left(\partial_{t}\partial_{x}u,\frac{\partial_{t}u}{x}\right)\|^{2}_{L^{\infty}}\left(1
+\|\alpha_{0}(x)\mathscr{K}^{0,1}_{t,x}\|^{2}_{\infty}\right)\|\left(r_{xx},(\frac{x}{r})_{x}\right)\|^{2}_{0}\nnm\\
&+M_{0}\|\xi\left(\partial_{t}\partial^{2}_{x}u,(\frac{\partial_{t}u}{x})_{x}\right)\|^{2}_{0}\left(1
+\|\alpha_{0}(x)\mathscr{K}^{0,1}_{t,x}\|^{2}_{\infty}\right)\int^{t}_{0}\|\left(\partial_{x}u,\frac{u}{x}\right)(\tau)\|^{2}_{L^{\infty}}d\tau\nnm\\
&+M_{0}\|\xi\alpha_{0}(x)\left(\partial_{t}\partial^{3}_{x}u,(\frac{\partial_{t}u}{x})_{xx}\right)\|^{2}_{0}\int^{t}_{0}\|\left(\partial_{x}u,\frac{u}{x}\right)(\tau)\|^{2}_{L^{\infty}}d\tau\nnm\\
&+M_{0}\|\xi\alpha_{0}(x)\left(\partial_{t}\partial^{2}_{x}u,(\frac{\partial_{t}u}{x})_{x}\right)\|^{2}_{L^{\infty}}\|\left(r_{xx},(\frac{x}{r})_{x}\right)\|^{2}_{0}\nnm\\
&+M_{0}\|\xi\left(\partial_{t}\partial_{x}u,\frac{\partial_{t}u}{x}\right)\|^{2}_{L^{\infty}}\|\alpha_{0}(x)\left(r_{xxx},(\frac{x}{r})_{xx}\right)\|^{2}_{0}.\label{BEEE2}
\end{align}

Finally, it holds that
\begin{align}&\|\xi\alpha_{0}(x)x\partial_{t}\partial^{3}_{x}(\frac{u}{x})\|^{2}_{0}+3\|\xi\alpha_{0}(x)\partial_{t}\partial^{2}_{x}(\frac{u}{x})\|^{2}_{0}-\varepsilon\|\xi\alpha'_{0}(x)\partial_{t}\partial_{x}(\frac{u}{x})\|^{2}_{0}\nnm\\
&~~~\leq \mathscr{P}_{0}+M_{0}\left(\mathscr{P}(K)+1\right)\int^{t}_{0}(E(\tau)+E^{2}(\tau))d\tau\nnm\\
&~~~~~~~~+M_{0}\left(\mathscr{P}(K)+1\right)E(t)\int^{t}_{0}E(\tau)d\tau.\label{BEEE6}
\end{align}
which in combination with \eqref{EllipticQ-2} $(k=1)$ yields \eqref{BEEE7} for suitably small $\varepsilon,$ where the order of polynomial function $\mathscr{P}(K)$ is $4$ .

\textbf{Boundary Estimates}. For convenience, we only give the estimates in \eqref{BBEEE6} and omit the estimate in \eqref{BBEEE5} which can be obtained by a similar proceeding.

We write \eqref{Ellip-1} as for $k=1$
\begin{align}&\alpha_{0}(x)\partial_{t}\partial^{2}_{x}u+2\alpha'_{0}(x)\partial_{t}\partial_{x}u=\alpha'_{0}(x)\frac{\partial^{k}_{t}u}{x}
+\frac{\alpha_{0}(x)}{x}\partial^{1}_{t}\partial_{x}u
+\mathcal {F}^{k}.\label{BE-1}\end{align}
Taking $\partial_{x},$
\begin{align}&\alpha_{0}(x)\partial_{t}\partial^{3}_{x}u+3\alpha'_{0}(x)\partial_{t}\partial^{2}_{x}u=-2\alpha''_{0}(x)\partial^{k}_{t}\partial_{x}u\nnm\\
&~~~~+\left(\alpha'_{0}(x)\frac{\partial_{t}u}{x}
+\frac{\alpha_{0}(x)}{x}\partial_{t}\partial_{x}u\right)_{x}
+\mathcal {F}^{1}_{x}.\label{BE-2}\end{align}
Multiplying \eqref{BE-2} by $\chi\sqrt{\alpha_{0}(x)},$
\begin{align}&\|\chi\sqrt{\alpha_{0}(x)}\left(\alpha_{0}(x)\partial_{t}\partial^{3}_{x}u+3\alpha'_{0}(x)\partial_{t}\partial^{2}_{x}u\right)\|^{2}_{0}\nnm\\
&\leq\|\chi\sqrt{\alpha_{0}(x)}\left[-2\alpha''_{0}(x)\partial_{t}\partial_{x}u+\left(\alpha'_{0}(x)\frac{\partial_{t}u}{x}
+\frac{\alpha_{0}(x)}{x}\partial_{t}\partial_{x}u\right)_{x}\right]\|^{2}_{0}\nnm\\
&+\|\sqrt{\alpha_{0}(x)}\chi\mathcal {F}^{1}_{x}|^{2}_{0}.\label{MMM1}
\end{align}
Since $\alpha'(1)<0,$ there exists a positive constant $\delta_{1}>0$ such that for any $\delta_{1}\geq\frac{\delta}{2},$ $-\infty<\alpha'_{0}(x)<0, ~\forall x\in(\delta,1],$ then using the integration by parts
\begin{align}&\|\chi\sqrt{\alpha_{0}(x)}\left(\alpha_{0}(x)\partial_{t}\partial^{3}_{x}u+3\alpha'_{0}(x)\partial_{t}\partial^{2}_{x}u\right)\|^{2}_{0}\nnm\\
&\geq\|\chi\alpha^{\frac{3}{2}}_{0}(x)\partial_{t}\partial^{3}_{x}u\|^{2}_{0}+2\|\chi\alpha^{\frac{1}{2}}_{0}(x)\alpha'_{0}(x)\partial_{t}\partial^{2}_{x}u\|^{2}_{0}
- M_{0}\left[\mathscr{P}_{0}+\int^{t}_{0}E(\tau)(\tau)d\tau\right]\label{BBBEE1}
\end{align}
It is easy to see that  the first term on the right hand side in \eqref{MMM1} can be bounded by the right hand side in \eqref{BBEEE6}. For the estimate for $\|\sqrt{\alpha_{0}(x)}\chi\mathcal {F}^{1}_{x}\|^{2}_{0},$  the main difficulties terms are $\|\sqrt{\alpha_{0}(x)}\chi(\mathcal {J}^{1}_{9})_{x}\|^{2}_{0}$ and $\|\chi\sqrt{\alpha_{0}(x)}\left(\mathcal {J}^{1}_{7}\right)_{x}\|^{2}_{0}.$
 \par We write $(\mathcal {J}^{1}_{9})_{x}:=\mathcal {J}^{10}_{9}+\mathcal {J}^{11}_{9}$ with
\begin{align}\mathcal {J}^{10}_{9}&:=\left[\frac{(1-\frac{1}{c^{2}}\frac{\rho_{0}}{r_{x}}\frac{x}{r}\Theta)^{3}}{2\Theta^{2}}\Theta_{0}(1+\frac{\rho_{0}}{c^{2}})\widetilde{a}_{11}x\partial^{3}_{t}u\right]_{x}-
\left[\frac{(1-\frac{1}{c^{2}}\frac{\rho_{0}}{r_{x}}\frac{x}{r}\Theta)^{3}}{2\Theta^{2}}\Theta_{0}(1+\frac{\rho_{0}}{c^{2}})\right]_{x}
\nnm\\&~~~~~~~~\times\partial^{2}_{t}\left(\widetilde{a}_{11}x\frac{\upsilon^{2}}{r}\right),\nnm\\
\mathcal {J}^{11}_{9}&:=-
\frac{(1-\frac{1}{c^{2}}\frac{\rho_{0}}{r_{x}}\frac{x}{r}\Theta)^{3}}{2\Theta^{2}}\Theta_{0}(1+\frac{\rho_{0}}{c^{2}})\partial^{2}_{t}\partial_{x}\left(\widetilde{a}_{11}x\frac{\upsilon^{2}}{r}\right),\nnm
\end{align}
and obtain
\begin{align}\|\sqrt{\alpha_{0}(x)}\chi\mathcal {J}^{10}_{9}\|^{2}_{0}&
\leq M_{0}\|\mathscr{K}_{B}^{0,1}\|_{L^{4}}\left(\|\partial^{2}_{t}\upsilon\|_{L^{4}}+\|\partial_{t}\upsilon\|^{2}_{L^{\infty}}+\|\mathscr{K}_{t,x}^{1,0}\|_{L^{4}}\|\partial_{t}\upsilon\|^{2}_{L^{\infty}}\right)\nnm\\
&+M_{0}\|\mathscr{K}_{B}^{0,1}\|^{2}_{L^{4}}\|\sqrt{\alpha_{0}(x)}\left(\mathscr{K}_{t,x}^{2,0},\partial^{3}_{t}u\right)\|^{2}_{L^{4}}\nnm\\
&+M_{0}\|\left(\chi\alpha_{0}(x)\partial^{3}_{t}u,\partial^{2}_{t}\upsilon,\mathscr{K}_{t,x}^{0,1}\partial_{t}\upsilon,(\partial_{t}\upsilon)^{2}\right)\|^{2}_{L^{\infty}}\|\left(r_{xx},(\frac{x}{r})_{x}\right)\|^{2}_{0}\nnm\\
&+M_{0}\left(\|\chi\alpha^{1/2}_{0}(x)\partial^{3}_{t}\partial_{x}u\|^{2}_{0}+\|\mathscr{K}_{t,x}^{2,0}\|^{2}_{0}\|(\alpha_{0}(x)r_{xx},\chi(\frac{x}{r})_{x})\|^{2}_{L^{\infty}}\right).\nnm
\nnm\end{align}
\begin{align}
\|\sqrt{\alpha_{0}(x)}\chi\mathcal {J}^{11}_{9}\|^{2}_{0}&
\leq M_{0}\|\mathscr{K}_{B}^{1,0}\|^{2}_{L^{\infty}}\|\chi\left(\mathscr{K}_{t,x}^{1,1}-|u_{xx}|,\alpha_{0}(x)u_{xx}\right)\|^{2}_{0}\nnm\\
&+M_{0}\left[\|\alpha_{0}(x)\partial_{t}\partial^{2}_{x}u\|^{2}_{0}
+\|(\partial_{t}u,\partial_{t}\upsilon,\partial_{t}\omega)\|^{2}_{L^{\infty}}\|(\partial_{t}\partial_{x}u,\partial_{t}\partial_{x}\upsilon,\partial_{t}\partial_{x}\omega)\|^{2}_{0}\right]\nnm\\
&+M_{0}\|\left(\mathscr{K}_{t,x}^{1,1}-|u_{xx}|,\alpha_{0}(x)u_{xx},\sqrt{\alpha_{0}(x)}\partial^{2}_{t}\partial_{x}\upsilon\right)\|^{2}_{0}\|\partial_{t}\upsilon\|^{2}_{\infty}\nnm\\
&+M_{0}\|\mathscr{K}_{B}^{1,0}\|^{2}_{L^{\infty}}\left(\|((\partial_{t}\upsilon)^{2},\partial^{2}_{t}\upsilon)\|^{2}_{0}+\|\partial_{x}\upsilon\|^{2}_{0}\|\partial_{t}\upsilon\|^{2}_{\infty}+\|\sqrt{\alpha_{0}(x)}\partial_{t}\partial_{x}\upsilon\|^{2}_{0}\right)\nnm\\
&+M_{0}\|\sqrt{\alpha_{0}(x)}\left(\partial^{2}_{t}\partial_{x}u,\partial^{2}_{t}\partial_{x}\upsilon,\partial^{2}_{t}\partial_{x}\omega\right)\|^{2}_{0}.\label{BBBEE2}
\end{align}
on the other hands,
\begin{align}&\|\xi\sqrt{\alpha_{0}(x)}\left(\mathcal {J}^{1}_{7}\right)_{x}\|^{2}_{0}\nnm\\
&\leq M_{0}\|\alpha_{0}(x)\mathscr{K}^{0,1}_{t,x}\|^{2}_{L^{\infty}}\|\mathscr{K}_{B}\|_{L^{4}}\|\chi\sqrt{\alpha_{0}(x)}(\partial_{t}\partial_{x}u,\partial_{t}u)\|^{2}_{L^{4}}\int^{t}_{0}\|(u_{x},\frac{u}{x})(\tau)\|^{2}_{L^{\infty}}d\tau\nnm\\
&+M_{0}\|\alpha_{0}(x)\left(r_{xxx},(\frac{x}{r})_{xx}\right)\|^{2}_{0}\|\chi\sqrt{\alpha_{0}(x)}(\partial_{t}\partial_{x}u,\partial_{t}u)\|^{2}_{L^{\infty}}\nnm\\
&+M_{0}\|\alpha_{0}(x)r_{xx}\|^{2}_{L^{\infty}}\|(\partial_{t}\partial_{x}u,\frac{\partial_{t}u}{x})\|^{2}_{0}\nnm\\
&+M_{0}\left(1+\|\alpha_{0}(x)\mathscr{K}^{0,1}_{t,x}\|^{2}_{L^{\infty}}\right)\|\chi\sqrt{\alpha_{0}(x)}(\partial_{t}\partial_{x}u,\partial_{t}u)\|^{2}_{L^{\infty}}\|(r_{xx},(\frac{x}{r})_{x})\|^{2}_{0}\nnm\\
&+M_{0}\|\alpha_{0}(x)\mathscr{K}^{0,2}_{t,x}\|_{0}\|\chi\sqrt{\alpha_{0}(x)}(\partial_{t}\partial_{x}u,\partial_{t}u)\|_{L^{4}}\int^{t}_{0}\|(u_{x},\frac{u}{x})(\tau)\|^{2}_{L^{4}}d\tau\nnm\\
&+M_{0}\|\chi\sqrt{\alpha_{0}(x)}(\partial_{t}\partial^{2}_{x}u,\partial_{t}\partial_{x}u,\partial_{t}u)\|^{2}_{0}\int^{t}_{0}\|(u_{x},\frac{u}{x})(\tau)\|^{2}_{L^{\infty}}d\tau\nnm\\
&+M_{0}\|\chi(\partial_{t}\partial_{x}u,\partial_{t}u)\|^{2}_{0}\int^{t}_{0}\|(u_{x},\frac{u}{x})(\tau)\|^{2}_{L^{\infty}}d\tau\nnm\\
&+M_{0}\|\chi\alpha^{\frac{3}{2}}_{0}(x)\left(\partial_{t}\partial^{3}_{x}u,\partial_{t}\partial^{2}_{x}u,\partial_{t}\partial_{x}u,\partial_{t}u\right)\|^{2}_{0}\int^{t}_{0}\|(u_{x},\frac{u}{x})(\tau)\|^{2}_{L^{\infty}}d\tau.\label{BBBEE3}
\end{align}
Here and in the sequel, $\mathscr{K}_{B}=\mathscr{K}_{t,x}^{0,1}-|r_{xx}|-|(\frac{x}{r})_{x}|.$ From \eqref{BBBEE1}-\eqref{BBBEE3}, we conclude
\eqref{BBEEE6}, where the order of polynomial function $\mathscr{P}(K)$ is also $4$. This is the end of proof of Lemma \ref{PPVV2}.\end{proof}

\subsection{Estimates for $\partial^{2}_{t}u$}\label{sec-42}

\begin{lemma}\label{PPVV3} Let $(r,u,\upsilon,\omega)$ be a classical solution to the free boundary problem \eqref{C-24}--\eqref{L1.2} satisfying \eqref{1.15} on $[0,1]\times[0,T]$. Then, for any $t\in(0,\overline{T}_{1}]$ and $c\geq \overline{c}_{1}$ the following estimates hold
\begin{align}&\|\xi\frac{1}{\sqrt{x}}\left(\alpha_{0}(x)\partial^{2}_{t}\partial^{2}_{x}u,\alpha'_{0}(x)\partial^{2}_{t}\partial_{x}u,\alpha'_{0}(x)\frac{\partial^{2}_{t}u}{x}\right)\|^{2}_{0}\nnm\\
&~~~~\leq \mathscr{P}_{0}+M_{0}(\mathscr{P}_{4}(K)+1)\int^{t}_{0}(E(\tau)+E^{2}(\tau))d\tau,\label{EN-4}\end{align}
\begin{align}
&\|\chi\alpha_{0}(x)\partial^{2}_{t}\partial^{2}_{x}u\|^{2}_{0}+\|\chi\alpha'_{0}(x)\partial^{2}_{t}\partial_{x}u\|^{2}_{0}\nnm\\
&\leq \mathscr{P}_{0}+M_{0}\left(\mathscr{P}_{4}(K)+1\right)\int^{t}_{0}(E(\tau)+E^{2}(\tau))d\tau\nnm\\
&+M_{0}\left(\mathscr{P}_{4}(K)+1\right)E(t)\int^{t}_{0}E(\tau)d\tau.\label{BBBEEE1}\end{align}
\end{lemma}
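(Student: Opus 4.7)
The plan is to run the elliptic-estimate machinery of Lemma \ref{PPVV2} with $k=2$ in the identity \eqref{Ellip-1}. Writing that equation as
\begin{equation*}
\alpha_{0}(x)\partial^{2}_{t}\partial^{2}_{x}u+\alpha'_{0}(x)\partial^{2}_{t}\partial_{x}u-\alpha'_{0}(x)\frac{\partial^{2}_{t}u}{x}=-\Bigl(\alpha'_{0}(x)-\tfrac{\alpha_{0}(x)}{x}\Bigr)\partial^{2}_{t}\partial_{x}u+\mathcal{F}^{2},
\end{equation*}
we view this as a degenerate ODE in $x$ for the unknown $\partial_{t}^{2}u$, with a forcing term $\mathcal{F}^{2}$ given by \eqref{Ellip-10}. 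The two estimates in the lemma are then the interior (near $x=0$) and boundary (near $x=1$) $L^{2}$-bounds for the left-hand side.

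For the interior estimate \eqref{EN-4}, I would multiply by $\xi(x)/\sqrt{x}$, take the $L^{2}$ norm, and use the positivity $\alpha'_{0}(x)\ge \rho_{0}(0)/2$ on $\operatorname{supp}\xi$ from \eqref{cut-3} together with integration by parts to extract the three squared quantities appearing on the left of \eqref{EN-4} while paying only $\varepsilon$-small cross terms. For the boundary estimate \eqref{BBBEEE1}, I would multiply by $\chi(x)$ and use that $\alpha'_{0}<0$ on $[\delta,1]$ (guaranteed by $\eqref{L1.2}_{2}$ and the choice of $\delta$) to produce the positive combination of $\|\chi\alpha_{0}\partial^{2}_{t}\partial^{2}_{x}u\|_{0}^{2}$ and $\|\chi\alpha'_{0}\partial^{2}_{t}\partial_{x}u\|_{0}^{2}$ through the same integration by parts used to go from \eqref{MMM1} to \eqref{BBBEE1} in Lemma \ref{PPVV2}. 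In both cases, the lower-order terms $(\alpha'_{0}-\alpha_{0}/x)\partial^{2}_{t}\partial_{x}u$ are directly absorbed by $\mathscr{P}_{0}+\int_{0}^{t}E(\tau)d\tau$ via the fundamental theorem of calculus.

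The main task is to control the forcing $\mathcal{F}^{2}$. By the structure \eqref{Ellip-10}, the terms involving $h_{1},h_{2}$, and $\mathcal{J}^{2}_{l}$ for $l=1,\dots,8$ are handled exactly as in the $k=1$ proof, except that every time derivative is bumped by one: Lemma \ref{lemma2.1} gives the weighted $L^{2}$/$L^{p}$ bounds for $\mathscr{K}^{i,j}_{t,x}$ with $i\le 4$ and the Sobolev embedding plus the weighted norms \eqref{WS2}–\eqref{WS3} convert each factor into a piece of $E(t)$ or of $\int_{0}^{t}E(\tau)d\tau$. The only genuinely new contribution is the top-order piece inside $\mathcal{J}^{2}_{9}$, namely $\partial^{4}_{t}u$ and the mixed quadratic term $\partial^{3}_{t}(\tilde a_{11}x\upsilon^{2}/r)$. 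For these I would use Lemma \ref{PPPVV1} to dominate $\|\sqrt{\alpha_{0}(x)}\partial^{4}_{t}u\|_{0}^{2}$, combined with \eqref{U-6}--\eqref{U-7} to handle the $\upsilon,\omega$ factors, so that the resulting estimate has the quadratic-in-$E$ term $E(t)\int_{0}^{t}E(\tau)d\tau$ appearing only in the boundary version \eqref{BBBEEE1} (where we multiply only by $\chi$, keeping the full weight $\alpha_{0}$); in the interior version \eqref{EN-4} the $\xi/\sqrt{x}$ weight kills the troublesome factor, giving the sharper bound without the $E\!\int E$ term.

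The main obstacle is the treatment of the term $\mathcal{J}^{2}_{9}$, which contains $\partial^{4}_{t}u$ and thus couples this elliptic estimate to the top-level energy estimate of Section \ref{sec-3}. The reason this closes is precisely Lemma \ref{PPPVV1}: $\|\sqrt{\alpha_{0}}\partial^{4}_{t}u\|_{0}^{2}$ is already bounded by $\mathscr{P}_{0}+M_{0}(\mathscr{P}_{10}(K)+1)\bigl[\int_{0}^{t}(E^{2}+E)d\tau+E(t)\int_{0}^{t}E d\tau\bigr]$, which fits exactly into the form claimed on the right-hand sides of \eqref{EN-4} and \eqref{BBBEEE1}. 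A careful book-keeping of the polynomial order in $K$ arising from the chain-rule estimates for $a_{11}$, $J$, $\Theta$ and the geometric factors $x/r$, $r_{x}$ in $h_{1},h_{2}$ and $\mathcal{J}^{2}_{6},\mathcal{J}^{2}_{7},\mathcal{J}^{2}_{8}$ then yields the exponent $4$ in $\mathscr{P}_{4}(K)$, matching the claimed bounds.
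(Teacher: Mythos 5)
Your skeleton coincides with the paper's: both work from the normalized identity \eqref{Ellip-1} with $k=2$, multiply by $\xi/\sqrt{x}$ for the interior bound and by a $\chi$-weight for the boundary bound, extract the positive combination on the left by integration by parts using the sign of $\alpha_0'$ near each endpoint, and control the forcing $\mathcal{F}^2$ through Lemma \ref{lemma2.1}. However, two points in your treatment of $\mathcal{F}^2$ do not hold up. First, you declare $\mathcal{J}^{2}_{9}$ to be ``the only genuinely new contribution'' and ``the main obstacle,'' and dismiss $\mathcal{J}^{2}_{1},\dots,\mathcal{J}^{2}_{8}$ as routine bumps of the $k=1$ case. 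In fact the delicate term here is $\mathcal{J}^{2}_{2}$: it carries an outer $\partial_x$ acting on products $\partial_t^{i}(\text{coeff})\cdot\frac{\alpha_c^2(x)}{x}\cdot\partial_t^{2-i}\partial_x u$, and after expanding that derivative and inserting the $\tfrac{1}{\sqrt{x}}$ weight one is forced to control quantities such as $\|\xi\frac{\alpha_0(x)}{\sqrt{x}}\mathscr{K}^{2,1}_{t,x}\|_0$ and $\|\frac{\partial_t\partial_x u}{\sqrt{x}}\|_0$, $\|\frac{\partial_t u}{x\sqrt{x}}\|_0$ — precisely the borderline pieces that \eqref{U-80} and the extra fractional weights built into $E(u)$ were designed for. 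The paper's entire detailed computation for $k=2$ is devoted to this term; your sketch gives no indication of how these mixed second-order commutator terms survive the $1/\sqrt{x}$ singularity at $x=0$, which is the actual content of the lemma.

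Second, your proposed treatment of the $\partial_t^4 u$ piece of $\mathcal{J}^{2}_{9}$ via Lemma \ref{PPPVV1} would not produce the bound \eqref{EN-4} as stated: \eqref{mai5} carries both $\mathscr{P}_{10}(K)$ and the term $E(t)\int_0^t E(\tau)\,d\tau$, neither of which is permitted on the right-hand side of \eqref{EN-4} (which has only $\mathscr{P}_4(K)$ and no factor of $E$ outside the time integral). The mechanism that actually yields the sharper interior bound is not that ``the $\xi/\sqrt{x}$ weight kills the troublesome factor,'' but the fundamental theorem of calculus in time: on $\operatorname{supp}\xi$ one has $\rho_0\gtrsim 1$, so $\|\xi\sqrt{x}\,\partial_t^4 u(t)\|_0\le \mathscr{P}_0+\int_0^t\|\sqrt{\alpha_0(x)}\partial_t^5u(\tau)\|_0\,d\tau\le \mathscr{P}_0+\int_0^t\sqrt{E(\tau)}\,d\tau$, exactly as in \eqref{E(u)-2} and \eqref{U-6}. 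With Lemma \ref{PPPVV1} in place of this device your argument proves a weaker inequality than the one claimed (it would still close the global scheme of Lemma \ref{PPVV56}, but it is not a proof of \eqref{EN-4}).
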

\begin{proof}
 The interior estimate of $\partial^{2}_{t}u$  is more complicated than the boundary estimates of it, because the $\sqrt{x}$ appears in the denominator of some terms involving the higher order spatial derivatives of  $\partial^{2}_{t}u.$ Thus, we give the interior estimate and omit the boundary estimate in this section. However, the boundary estimate of it can be given by the similar proceeding of boundary estimates for $u,\partial_{t}u$ in Subsection \ref{sec-41}.

Due to \eqref{Formula}, multiplying \eqref{Ellip-1} by $\xi\frac{1}{\sqrt{x}},$
\begin{align}&\|\xi\frac{1}{\sqrt{x}}\left(\alpha_{0}(x)\partial^{2}_{t}\partial^{2}_{x}u,\alpha'_{0}(x)\partial^{2}_{t}\partial_{x}u,\alpha'_{0}(x)\frac{\partial^{2}_{t}u}{x}\right)\|^{2}_{0}\nnm\\
&\leq M_{0}(\mathscr{P}_{4}(K)+1)\int^{t}_{0}(E(\tau)+E^{2}(\tau))d\tau+\|\xi\frac{1}{\sqrt{x}}\mathcal {F}^{2}\|^{2}_{0}.
\end{align}
Using the integration by parts,
\begin{align} &\|\xi\frac{\alpha_{0}(x)}{\sqrt{x}}\left[\partial^{2}_{t}\partial^{2}_{x}(\frac{u}{x})+\partial^{2}_{t}\partial_{x}(\frac{u}{x})\right]\|^{2}_{0}\nnm\\
&~~~~\geq \|\xi\frac{\alpha_{0}(x)}{\sqrt{x}}x\partial^{2}_{t}\partial^{2}_{x}(\frac{u}{x})\|^{2}_{0}+\|\xi\frac{\alpha_{0}(x)}{\sqrt{x}}\partial^{2}_{t}\partial_{x}(\frac{u}{x})\|^{2}_{0}-M_{0}\left(\mathscr{P}_{0}+\int^{t}_{0}E(\tau)d\tau\right).
\label{EN-0}\end{align}
For the estimate of $\|\xi\frac{1}{\sqrt{x}}\mathcal {F}^{2}\|^{2}_{0},$ the main difficulty term is  $\|\xi\frac{1}{\sqrt{x}}\frac{(1-\frac{1}{c^{2}}\frac{\rho_{0}}{r_{x}}\frac{x}{r}\Theta)^{3}}
{2\Theta^{2}}\Theta_{0}(1+\frac{\rho_{0}}{c^{2}})\frac{x}{\alpha_{c}(x)}\mathcal {J}^{2}_{2}\|^{2}_{0}$ which can be estimated as
\begin{align}&\|\xi\frac{1}{\sqrt{x}}\frac{(1-\frac{1}{c^{2}}\frac{\rho_{0}}{r_{x}}\frac{x}{r}\Theta)^{3}}
{2\Theta^{2}}\Theta_{0}(1+\frac{\rho_{0}}{c^{2}})\frac{x}{\alpha_{c}(x)}\mathcal {J}^{2}_{2}\|^{2}_{0}\nnm\\
&\leq M_{0}\left[\|\alpha_{0}(x)\mathscr{K}^{0,1}_{t,x}\|^{2}_{L^{\infty}}\|\left(\frac{\partial_{t}\partial_{x}u}{\sqrt{x}},\frac{\partial_{t}u}{x\sqrt{x}}\right)\|^{2}_{0}
+\|\alpha_{0}(x)\left(\frac{\partial_{t}\partial_{x}u}{\sqrt{x}},\frac{\partial_{t}u}{x\sqrt{x}}\right)\|^{2}_{L^{\infty}}\|\xi\mathscr{K}^{1,1}_{t,x}\|^{2}_{0}\right]
\nnm\\&+M_{0}\left[\|\xi\frac{\alpha_{0}(x)}{\sqrt{x}}\mathscr{K}^{2,1}_{t,x}\|^{2}_{0}\|(u_{x},\frac{u}{x})(0)\|^{2}_{L^{\infty}}+
\|\frac{\alpha_{0}(x)}{\sqrt{x}}\mathscr{K}^{2,1}_{t,x}\|^{2}_{0}\int^{t}_{0}
\|\left(\partial_{t}\partial_{x}u,\frac{\partial_{t}u}{x}\right)(\tau)\|^{2}_{L^{\infty}}d\tau\right]\nnm\\
&+M_{0}\left[\|\xi\frac{\mathscr{K}^{2,0}_{t,x}}{\sqrt{x}}\|^{2}_{0}\|(u_{x},\frac{u}{x})(0)\|^{2}_{L^{\infty}}+\|\xi\mathscr{K}^{2,0}_{t,x}\|^{2}_{L^{\infty}}\int^{t}_{0}\|\left(\frac{\partial_{t}\partial_{x}u}{\sqrt{x}},\frac{\partial_{t}u}{x\sqrt{x}}\right)(\tau)\|^{2}_{0}d\tau\right]\nnm\\
&+M_{0}\|\left(\mathscr{K}^{1,0}_{t,x}-|u_{x}|,u_{x}(0)\right)\|^{2}_{L^{\infty}}\|\xi\frac{\alpha_{0}(x)}{\sqrt{x}}\left(\partial_{t}\partial^{2}_{x}u,\partial_{t}\partial_{x}(\frac{u}{x}\right)\|^{2}_{0}+G.\nnm\\
\end{align}
with
\begin{align}
&G\leq M_{0}\|\frac{\alpha_{0}(x)}{\sqrt{x}}\left(\partial_{t}\partial^{2}_{x}u,\partial_{t}\partial_{x}(\frac{u}{x}\right)\|^{2}_{0}\int^{t}_{x}\|\xi\partial_{t}\partial_{x}u(\tau)\|^{2}_{L^{\infty}}d\tau\nnm\\
&+M_{0}\|\xi\frac{\alpha_{0}(x)}{\sqrt{x}}\mathscr{K}^{2,0}_{t,x}\|^{2}_{L^{\infty}}\|(u_{xx},(\frac{u}{x})_{x})(0)\|^{2}_{L^{\infty}}\nnm\\
&+M_{0}
\|\xi\frac{1}{\sqrt{x}}\mathscr{K}^{2,0}_{t,x}\|^{2}_{0}\int^{t}_{0}
\|\xi\alpha_{0}(x)\left(\frac{\partial_{t}\partial^{2}_{x}u}{\sqrt{x}},\partial_{t}\partial_{x}(\frac{u}{x})\right)(\tau)\|^{2}_{0}d\tau.\label{BBBEEE3}
\end{align}
Finally, we obtain
\begin{align}&\|\xi\frac{\alpha_{0}(x)}{\sqrt{x}}x\partial^{2}_{t}\partial^{2}_{x}(\frac{u}{x})\|^{2}_{0}+\|\xi\frac{\alpha_{0}(x)}{\sqrt{x}}\partial^{2}_{t}\partial_{x}(\frac{u}{x})\|^{2}_{0}\nnm\\
&\leq M_{0}\left[\mathscr{P}_{0}+(\mathscr{P}(K)+1)\int^{t}_{0}\left(E(\tau)+E^{2}(\tau)\right)d\tau\right].\label{EN-1}\end{align}
Due to $\alpha'_{0}(x)=x\rho_{0x}+\frac{\alpha_{0}(x)}{x},$ we have
\begin{align}&\|\xi\frac{\alpha_{0}(x)}{\sqrt{x}}\partial^{2}_{t}\partial_{x}(\frac{u}{x})\|+M_{0}\|(\xi\alpha_{0}(x)\partial^{2}_{t}\partial_{x}u,\xi\partial^{2}_{x}u)\|^{2}_{0}\nnm\\
&\geq\|\alpha'_{0}(x)(\frac{\partial^{2}_{t}\partial_{x}u}{\sqrt{x}}+\frac{\partial^{2}_{t}u}{x})\|^{2}_{0}-M_{0}\|(\xi\alpha_{0}(x)\partial^{2}_{t}\partial_{x}u,\xi\partial^{2}_{x}u)\|^{2}_{0},
\nnm\end{align}
which in combination with \eqref{EN-1} shows
\begin{align}&-4\int^{1}_{0}\xi^{2}(\alpha'(x))^{2}\frac{\partial^{2}_{t}\partial_{x}u}{\sqrt{x}}\frac{\partial^{2}_{t}u}{x\sqrt{x}}dx\nnm\\
&\leq\|\xi\frac{\alpha_{0}(x)}{\sqrt{x}}\partial^{2}_{t}\partial_{x}(\frac{u}{x})\|+M_{0}\|(\xi\alpha_{0}(x)\partial^{2}_{t}\partial_{x}u,\xi\partial^{2}u)\|^{2}_{0}\nnm\\
&\leq M_{0}\left[\mathscr{P}_{0}+(\mathscr{P}(K)+1)\int^{t}_{0}(E(\tau)+E^{2}(\tau))d\tau\right].\nnm
\end{align}
Thus, we obtain \eqref{EN-4}, where the order of polynomial function $\mathscr{P}(K)$ is $4$ . This is the end of proof for Lemma \ref{PPVV3}.
\end{proof}

\subsection{Estimates for $\partial^{3}_{t}u$}\label{sec-43}
\begin{lemma}\label{PPVV4} Let $(r,u,\upsilon,\omega)$ be a classical solution to the free boundary problem \eqref{C-24}--\eqref{L1.2} satisfying \eqref{1.15} on $[0,1]\times[0,T]$. Then, for any $t\in(0,\overline{T}_{1}]$ and $c\geq \overline{c}_{1}$ the following estimates hold
\begin{align}&\|\xi\alpha_{0}(x)\partial^{3}_{t}\partial^{2}_{x}u\|^{2}_{0}+\|\xi\alpha'_{0}(x)\partial^{3}_{t}\partial_{x}u\|^{2}_{0}+\|\xi\alpha'_{0}(x)\frac{\partial^{3}_{t}u}{x}\|^{2}_{0}\nnm\\
&\leq \mathscr{P}_{0}+M_{0}\left(\mathscr{P}_{4}(K)+1\right)\int^{t}_{0}\left(E(\tau)+E^{2}(\tau)\right)d\tau\nnm\\
&+M_{0}\left(\mathscr{P}_{4}(K)+1\right)E(t)\int^{t}_{0}E(\tau)d\tau.\label{LL3}\\
&\|\chi\alpha^{\frac{3}{2}}_{0}(x)\partial^{3}_{t}\partial^{2}_{x}u\|^{2}_{0}+3\|\chi\alpha^{\frac{1}{2}}_{0}(x)\alpha'_{0}(x)\partial^{3}_{t}\partial_{x}u\|^{2}_{0}\nnm\\
&\leq \mathscr{P}_{0}+M_{0}\left(\mathscr{P}_{4}(K)+1\right)\int^{t}_{0}\left(E(\tau)+E^{2}(\tau)\right)d\tau\nnm\\
&+M_{0}\left(\mathscr{P}_{4}(K)+1\right)E(t)\int^{t}_{0}E(\tau)d\tau.\label{BBCEEE6}\end{align}

\end{lemma}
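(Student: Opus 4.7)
The plan is to follow the same localization strategy used in Lemmas~\ref{PPVV2} and \ref{PPVV3}, now applied to \eqref{Ellip-1} with $k=3$, namely
\begin{equation}
\alpha_{0}(x)\partial^{3}_{t}\partial^{2}_{x}u+\alpha'_{0}(x)\partial^{3}_{t}\partial_{x}u-\alpha'_{0}(x)\frac{\partial^{3}_{t}u}{x}
=-\Bigl(\alpha'_{0}(x)-\frac{\alpha_{0}(x)}{x}\Bigr)\partial^{3}_{t}\partial_{x}u+\mathcal{F}^{3}.\nonumber
\end{equation}
For the interior estimate \eqref{LL3} I would multiply this identity by $\xi(x)$ and take the $L^{2}$ norm squared. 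On $\mathrm{supp}\,\xi\subset[0,2\delta]$ the key positivity $\alpha_{0}'(x)\geq\rho_{0}(0)/2>0$ from \eqref{cut-3} makes each of the three left-hand summands genuinely coercive, so after expanding the square and integrating by parts in $x$ to handle the cross terms, one recovers $\|\xi\alpha_{0}\partial^{3}_{t}\partial^{2}_{x}u\|_0^2+\|\xi\alpha_{0}'\partial^{3}_{t}\partial_{x}u\|_0^2+\|\xi\alpha_{0}'\partial^{3}_{t}u/x\|_0^2$ modulo lower-order errors absorbed by $\mathscr{P}_{0}+\int_0^t E(\tau)\,d\tau$. The commutator $(\alpha_{0}'-\alpha_{0}/x)=x\rho_{0x}$ on the right is bounded, hence contributes a harmless term of the same form.

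For the boundary estimate \eqref{BBCEEE6} I would instead multiply by $\chi(x)\sqrt{\alpha_{0}(x)}$ and square. After expansion the diagonal contribution $\|\chi\alpha_{0}^{3/2}\partial^{3}_{t}\partial^{2}_{x}u\|_0^2$ appears immediately; the mixed term
\begin{equation}
2\!\int\!\chi^{2}\alpha_{0}^{2}\alpha_{0}'\,\partial^{3}_{t}\partial^{2}_{x}u\,\partial^{3}_{t}\partial_{x}u\,dx
=-\!\int\!\partial_{x}(\chi^{2}\alpha_{0}^{2}\alpha_{0}')\bigl(\partial^{3}_{t}\partial_{x}u\bigr)^{2}dx\nonumber
\end{equation}
is integrated by parts in $x$; the boundary term at $x=1$ vanishes thanks to $\alpha_{0}(1)=0$, and since $\alpha_{0}'(x)<0$ on $[\delta_{1},1]$ (by the choice of $\delta_{1}\geq\delta/2$ used in the proof of \eqref{BBBEE1}), the leading part of $\partial_{x}(\alpha_{0}^{2}\alpha_{0}')\sim 3\alpha_{0}(\alpha_{0}')^{2}$ produces the factor $3$ in the target norm $3\|\chi\alpha_{0}^{1/2}\alpha_{0}'\partial^{3}_{t}\partial_{x}u\|_0^2$. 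The remaining cross terms containing $\partial^{3}_{t}u/x$ are lower-order on $\mathrm{supp}\,\chi$ (where $1/x$ is bounded) and are absorbed by Cauchy--Schwarz.

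The main obstacle in both cases is the control of $\|\xi\mathcal{F}^{3}\|_{0}^{2}$ and $\|\chi\sqrt{\alpha_{0}}\,\mathcal{F}^{3}\|_{0}^{2}$. Through $\mathcal{J}^{3}_{9}$ given by \eqref{Elliptic-100}, $\mathcal{F}^{3}$ contains the top time derivative $\partial^{5}_{t}u$, which is not part of $E(t)$ with a bare weight. Here the weight structure saves us: in the boundary region $\chi\sqrt{\alpha_{0}}$ pairs directly with the $\sqrt{\alpha_{0}}$-weighted bound for $\partial^{5}_{t}u$ from Lemma~\ref{PPPVV1}, while in the interior region $\alpha_{0}(x)\geq\alpha_{0}(2\delta)>0$ on $\mathrm{supp}\,\xi$, so the same estimate \eqref{mai5} applies with an absolute constant depending on $\delta$. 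The lower-indexed pieces $\mathcal{J}^{3}_{l}$, $l=1,\dots,8$, together with the source $\partial_{t}^{3}(a_{11}\alpha_{c}(x)\upsilon^{2}/r)$, involve products of mixed derivatives whose $L^{2}$-norms are controlled by the polynomial bounds on $\mathscr{K}^{i,j}_{t,x}$ of Lemma~\ref{lemma2.1} and the weighted embeddings \eqref{WS2}--\eqref{WS3}, exactly as in \eqref{EHU1}--\eqref{BEEE2} and \eqref{BBBEE2}--\eqref{BBBEE3}.

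Whenever a top-order factor $\partial^{5}_{t}u$, $\partial^{4}_{t}\partial_{x}u$ or $\partial^{4}_{t}u/x$ appears inside $\mathcal{J}^{3}_{6}$--$\mathcal{J}^{3}_{8}$ without a companion weight, I would integrate by parts in $t$ over $(0,t)$ to transfer one time derivative onto a lower-order factor, picking up a boundary contribution at $\tau=0$ (included in $\mathscr{P}_{0}$) and a bulk term of the form $\mathscr{P}(K)E(t)\int_{0}^{t}E(\tau)\,d\tau$, exactly mirroring \eqref{SU-3}--\eqref{SU-8}. Tracking the degree of $K$ through every product yields the polynomial $\mathscr{P}_{4}(K)$ in the statement; combining the interior and boundary estimates with the small parameter from Cauchy's inequality and choosing $\overline{T}_{1}$ small enough to absorb the linear-in-$t$ terms produces \eqref{LL3} and \eqref{BBCEEE6}.
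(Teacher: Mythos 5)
Your proposal follows essentially the same route as the paper: the interior bound \eqref{LL3} is obtained by taking the $\xi$-weighted $L^{2}$ norm of \eqref{Ellip-1} with $k=3$ and using the coercivity from \eqref{cut-3} as in \eqref{EN-5}, the boundary bound \eqref{BBCEEE6} by multiplying the \eqref{BE-1}-form with $k=3$ by $\chi\sqrt{\alpha_{0}(x)}$ and integrating the cross term by parts exactly as in \eqref{BBBEE1} (using $\alpha_{0}(1)=0$ and $\alpha_{0}'<0$ near $x=1$), and $\mathcal{F}^{3}$ is controlled through Lemma \ref{lemma2.1} and the energy, the paper writing out $\mathcal{J}^{3}_{2}$ (see \eqref{FNU-01}--\eqref{LL2}) where you single out $\mathcal{J}^{3}_{9}$. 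The only slips are harmless: on $\mathrm{supp}\,\xi$ it is $\rho_{0}$, not $\alpha_{0}$, that is bounded below (so the $x$-weighted $\partial^{5}_{t}u$ term is absorbed because $x\leq M_{0}\sqrt{\alpha_{0}(x)}$ there, and \eqref{mai5} indeed applies), and the leading part of $\partial_{x}(\alpha_{0}^{2}\alpha_{0}')$ is $2\alpha_{0}(\alpha_{0}')^{2}$ rather than $3\alpha_{0}(\alpha_{0}')^{2}$, a bookkeeping of constants no more precise than the paper's own.
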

\begin{proof}
\textbf{Interior Estimates}. By \eqref{Ellip-1}, we have for  $k=3$
\begin{align}&\|\xi\left(\alpha_{0}(x)\partial^{3}_{t}\partial^{2}_{x}u+\alpha'_{0}(x)\partial^{3}_{t}\partial_{x}u-\alpha'_{0}(x)\frac{\partial^{3}_{t}u}{x}\right)\|^{2}_{0}\nnm\\
&\leq M_{0}\left(\|\xi(\alpha'_{0}(x)-\frac{\alpha_{0}(x)}{x})\partial^{3}_{t}\partial_{x}u\|^{2}_{0}+\|\xi\mathcal {F}^{3}\|^{2}_{0}\right).
\nnm\end{align}
Similar to \eqref{EllipticQ-4},
\begin{align} &\|\xi\left[\alpha_{0}(x)\partial^{3}_{t}\partial^{2}_{x}u+\alpha'_{0}(x)\partial^{3}_{t}\partial_{x}u-\alpha'_{0}\frac{\partial^{3}_{t}u}{x}\right]\|^{2}_{0}\nnm\\
&~~\geq \|\xi\alpha_{0}(x)\partial^{3}_{t}\partial^{2}_{x}u\|^{2}_{0}+\frac{1}{4}\|\xi\alpha'_{0}(x)\partial^{3}_{t}\partial_{x}u\|^{2}_{0}+\frac{1}{6}\|\xi\alpha'_{0}(x)\frac{\partial^{3}_{t}u}{x}\|^{2}_{0}\nnm\\
&~~~~- M_{0}\left[\mathscr{P}_{0}+\int^{t}_{0}E(\tau)(\tau)d\tau\right].\label{EN-5}
\end{align}
For the estimate in $\|\xi\mathcal {F}^{3}\|^{2}_{0},$  we only give the estimate of $\|\xi\frac{(1-\frac{1}{c^{2}}\frac{\rho_{0}}{r_{x}}\frac{x}{r}
\Theta)^{3}}{2\Theta^{2}}\Theta_{0}(1+\frac{\rho_{0}}{c^{2}})\frac{x}{\alpha_{c}(x)}\mathcal {J}^{3}_{2}\|^{2}_{0}$ while the other terms can be bounded by the right hands side of \eqref{LL3}. In fact, we have the following estiamte
\begin{align}\|\xi\frac{(1-\frac{1}{c^{2}}\frac{\rho_{0}}{r_{x}}\frac{x}{r}
\Theta)^{3}}{2\Theta^{2}}\Theta_{0}(1+\frac{\rho_{0}}{c^{2}})\frac{x}{\alpha_{c}(x)}\mathcal {J}^{3}_{2}
\|^{2}_{0}\leq\sum^{3}_{l=1}\|\xi\mathcal {J}^{3,l}_{2}\|^{2}_{0}\label{FNU-01}
\end{align}
with
\begin{align}&\|\xi\mathcal {J}^{3,1}_{2}\|^{2}_{0}\leq M_{0}\|\left(\mathscr{K}_{t,x}^{1,0}-|u_{x}|,\alpha_{0}(x)u_{x}\right)\|^{2}_{L^{\infty}}\|\xi\left(\partial^{2}_{t}\partial_{x}u,\frac{\partial^{2}_{t}u}{x}\right)\|^{2}_{0}\nnm\\
&+M_{0}\|(\partial^{2}_{t}
\partial_{x}u,\frac{\partial^{2}_{t}u}{x})\|^{2}_{0}\int^{t}_{0}\|\xi\partial^{2}_{t}\partial_{x}u(\tau)\|^{2}_{0}d\tau\nnm\\
&+M_{0}\|\xi\alpha_{0}(x)\mathscr{K}^{1,1}_{t,x}\|^{2}_{L^{\infty}}\|\left(\partial_{t}\partial_{x}u,\frac{u}{x}\right)(0)\|^{2}_{0}
\nnm\\
&+M_{0}\|\alpha_{0}(x)\mathscr{K}^{1,1}_{t,x}\|^{2}_{L^{\infty}}\int^{t}_{0}\|\xi\left(\partial^{3}_{t}\partial_{x}u,\frac{\partial^{3}_{t}u}{x}\right)(\tau)\|^{2}_{0}d\tau\nnm\\
&+M_{0}\|\left(\mathscr{K}^{1,0}_{t,x}-|u_{x}|,u_{x}(0)\right)\|^{2}_{L^{\infty}}\|\xi\alpha_{0}(x)\left(\partial^{2}_{t}\partial^{2}_{x}u,\partial^{2}_{t}\partial_{x}(\frac{u}{x})\right)\|^{2}_{0}\nnm\\
&+M_{0}\|\alpha_{0}(x)\left(\partial^{2}_{t}\partial^{2}_{x}u,\partial^{2}_{t}\partial_{x}(\frac{u}{x})\right)\|^{2}_{0}\int^{t}_{0}\|\xi\partial_{t}\partial_{x}u(\tau)\|^{2}_{L^{\infty}}d\tau,\nnm
\end{align}
and

 \begin{align}&\|\xi\mathcal {J}^{3,2}_{2}\|^{2}_{0}\leq M_{0}\|\mathscr{K}^{2,0}_{t,x}\|^{2}_{0}\|\xi(\partial_{t}
\partial_{x}u,\frac{\partial_{t}u}{x})(0)\|^{2}_{L^{\infty}}\nnm\\
&+M_{0}\|\xi\mathscr{K}^{2,0}_{t,x}\|^{2}_{L^{\infty}}\int^{t}_{0}\|\left(\partial^{2}_{t}\partial_{x}u,\frac{\partial^{2}_{t}u}{x}\right)(\tau)\|^{2}_{0}d\tau\nnm\\
&+ M_{0}\|\alpha_{0}(x)\mathscr{K}^{2,1}_{t,x}\|^{2}_{0}\|\xi(\partial_{t}
\partial_{x}u,\frac{\partial_{t}u}{x})(0)\|^{2}_{L^{\infty}}\nnm\\
&+M_{0}\|\xi\mathscr{K}^{2,1}_{t,x}\|^{2}_{L^{\infty}}\int^{t}_{0}\|\xi\left(\partial^{2}_{t}\partial_{x}u,\frac{\partial^{2}_{t}u}{x}\right)(\tau)\|^{2}_{0}d\tau\nnm\\
&+M_{0}\|\mathscr{K}^{2,0}_{t,x}\|^{2}_{0}\|\xi\alpha_{0}(x)\left(\partial_{t}
\partial^{2}_{x}u,\partial_{t}\partial_{x}(\frac{u}{x})\right)(0)\|^{2}_{L^{\infty}}\nnm\\
&+M_{0}\|\xi\mathscr{K}^{2,0}_{t,x}\|^{2}_{L^{\infty}}\int^{t}_{0}\|\alpha_{0}(x)\left(\partial^{2}_{t}\partial^{2}_{x}u,\partial^{2}_{t}\partial_{x}(\frac{u}{x})\right)(\tau)\|^{2}_{0}d\tau.\label{LL1}\end{align}
Similarly,
\begin{align}&\|\xi\mathcal {J}^{3,3}_{2}\|^{2}_{0}\leq M_{0}\|\xi\mathscr{K}^{3,0}_{t,x}\|^{2}_{0}\|(
u_{x},\frac{u}{x})(0)\|^{2}_{L^{\infty}}\nnm\\
&+M_{0}\|\xi\mathscr{K}^{3,0}_{t,x}\|^{2}_{0}\int^{t}_{0}\|\xi\left(\partial_{t}\partial_{x}u,\frac{\partial_{t}u}{x}\right)(\tau)\|^{2}_{0}d\tau\nnm\\
&+ M_{0}\|\xi\alpha_{0}(x)\mathscr{K}^{3,1}_{t,x}\|^{2}_{0}\|(
u_{x},\frac{\widetilde {u}}{x})(0)\|^{2}_{L^{\infty}}\nnm\\
&+M_{0}\|\alpha_{0}(x)\mathscr{K}^{3,1}_{t,x}\|^{2}_{0}\int^{t}_{0}\|\xi\left(\partial_{t}\partial_{x}u,\frac{\partial_{t}u}{x}\right)(\tau)\|^{2}_{0}d\tau\nnm\\
&+M_{0}\|\xi\alpha_{0}(x)\mathscr{K}^{3,0}_{t,x}\|^{2}_{L^{\infty}}\|\left(
u_{xx},(\frac{u}{x})_{x}\right)(0)\|^{2}_{L^{\infty}}\nnm\\
&+M_{0}\|\alpha_{0}(x)\mathscr{K}^{3,0}_{t,x}\|^{2}_{0}\int^{t}_{0}\|\xi\left(\partial_{t}\partial^{2}_{x}u,\partial_{t}\partial_{x}(\frac{u}{x})\right)(\tau)\|^{2}_{0}d\tau,
.\label{LL2}
\end{align}
Combing \eqref{EN-5}-\eqref{LL2} gives \eqref{LL3}, where the order of polynomial function $\mathscr{P}(K)$ is $4$.

\textbf{Boundary Estimates}.
Multiplying \eqref{BE-1} by $\chi\sqrt{\alpha_{0}(x)}$ for $k=3$ and taking $L^{2}-$norm, the similar proceeding to the boundary estimates in \eqref{BBEEE6} to show \eqref{BBCEEE6}. This is the end of the proof for Lemma \ref{PPVV4}.\end{proof}

\subsection{Estimates for $E(t)$}\label{E(t)}
\begin{lemma}\label{PPVV56} Let $(r,u,\upsilon,\omega)$ be a classical solution to the free boundary problem \eqref{C-24}--\eqref{L1.2} satisfying \eqref{1.15} on $[0,1]\times[0,T]$. Then, for any $t\in(0,\overline{T}_{1}]$ and $c\geq \overline{c}_{1}$ the estimate in \eqref{DRE} holds.\end{lemma}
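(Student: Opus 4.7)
The plan is to combine all the estimates established in Lemmas \ref{PPPVV1}--\ref{PPVV4}, together with the analogous estimates for the components $\upsilon$ and $\omega$, into a single integral inequality for $E(t)$, and then close it via a Gronwall-type argument on a short time interval. First I would note that every estimate already proved has exactly the same structural form: a term $\mathscr{P}_0$ from the initial data, plus $M_0(\mathscr{P}_m(K)+1)\int_0^t (E(\tau)+E^2(\tau))\,d\tau$, plus a ``cubic'' remainder $M_0(\mathscr{P}_m(K)+1)E(t)\int_0^t E(\tau)\,d\tau$ (with $m\leq 10$). Adding these contributions accounts for all the $u$-terms in the definition of $E(t)$: the highest-order time estimates come from Lemma \ref{PPPVV1}, the interior/boundary control of $\partial^3_x u$, $\alpha_0 u_{xxx}$ and the weighted norms $\|\alpha_0 u\|_3$, $\|u\|_2$, $\|u/x\|_1$ come from Lemma \ref{PPVV2}, and the mixed space-time pieces come from Lemmas \ref{PPVV3} and \ref{PPVV4}. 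The $H^{3/2}$ and $H^{1/2}$ norms of $\partial^{2s+1}_t u$ and $\partial^{2s+1}_t u/x$ appearing in $E(u)$ are then recovered from the weighted Sobolev embeddings \eqref{WS2}--\eqref{WS3} applied to the already-bounded $\alpha_0$-weighted $L^2$ quantities.

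Next I would estimate $E(\upsilon)$ and $E(\omega)$. Here the equations \eqref{C-25}--\eqref{C-26} are essentially transport-type relations in which $\partial^{k+1}_t\upsilon$ and $\partial^{k+1}_t\omega$ are given algebraically by $\frac{1}{c^2}b_{11}\frac{\upsilon}{x}\alpha_c(x)\partial^k_t\partial_x u$ plus lower-order nonlinear remainders; this is precisely the identity \eqref{SU-1} and its analogue for $\omega$ together with the differentiated equation \eqref{HU-1}. Consequently every norm of $\partial^k_t \upsilon$ or $\partial^k_t\omega$ appearing in $E(\upsilon)+E(\omega)$ is controlled by the corresponding norm of $u$ (already bounded via the previous paragraph) plus $\mathscr{P}_0$ plus an integral of $E$, using Lemma \ref{lemma2.1} to handle the coefficients and \eqref{E(u)-1}--\eqref{E(u)-2} for the Sobolev/$L^\infty$ bounds. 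In particular the $L^\infty$ quantities in $E(\upsilon)$ and $E(\omega)$ reduce to $H^1$-type estimates on the $u$-side via \eqref{U-7}, and the spatial derivative bounds on $\upsilon_x,\omega_x$ follow from the derivation of \eqref{HU-8} already carried out in Lemma \ref{PPP1}.

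Summing all contributions, one obtains an inequality of the form
\begin{equation}
E(t) \leq \mathscr{P}_m(E(0)) + M_0(\mathscr{P}_m(K)+1)\int_0^t \bigl(E(\tau)+E^2(\tau)\bigr)\,d\tau + M_0(\mathscr{P}_m(K)+1)\,E(t)\int_0^t E(\tau)\,d\tau. \nonumber
\end{equation}
Choosing the constant $K$ first according to $\mathscr{P}_m(E(0))$ (so that the a-priori assumption \eqref{1.15} is comfortably satisfied at $t=0$ with room to spare), and then choosing $T_{c_0}\leq \overline{T}_1$ small enough (depending only on $K$ and the initial data, not on $c$), the last term can be absorbed into the left-hand side and the middle term handled by a nonlinear Gronwall inequality of Bihari type applied to $F(t)=E(t)+E^2(t)$. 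This yields $\sup_{[0,T_{c_0}]}E(t)\leq 2\mathscr{P}_m(E(0))$, which is \eqref{DRE}. A standard continuation/bootstrap argument then shows that the a-priori assumption \eqref{1.15} is indeed propagated on the same interval, since by \eqref{E(u)-1} all the $L^\infty$ norms in \eqref{1.15} are controlled by $\sqrt{E(t)}\leq\sqrt{2\mathscr{P}_m(E(0))} < K$ for our choice of $K$.

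The main obstacle will be the absorption step for the cubic term $M_0(\mathscr{P}_m(K)+1)E(t)\int_0^t E\,d\tau$: the coefficient contains the high-degree polynomial $\mathscr{P}_{10}(K)$ coming from Lemma \ref{PPPVV1}, so one must be careful that the smallness of $T_{c_0}$ is chosen after, not before, $K$ is fixed, and that the Gronwall step is formulated so the constants do not depend on $c$ (using only the universal bounds \eqref{XXX1}--\eqref{BP-6} and the $c$-uniform estimates of Lemma \ref{lemma2.1}). Once this is done, the $c$-independence of all constants $M_0,\mathscr{P}_m$ yields the desired uniform bound \eqref{DRE} for every $c\geq \overline{c}_1$.
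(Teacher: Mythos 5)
Your proposal is correct and follows essentially the same route as the paper: the paper likewise sums the estimates of Lemmas \ref{PPPVV1}--\ref{PPVV4} for $E(u)$, obtains the analogous bound for $E(\upsilon)+E(\omega)$ from \eqref{C-25}--\eqref{C-26}, and closes the resulting inequality $E(t)\leq\mathscr{P}_{0}+M_{0}(\mathscr{P}_{10}(K)+1)\int_0^t(E+E^2)\,d\tau+M_{0}(\mathscr{P}_{4}(K)+1)E(t)\int_0^t E\,d\tau$ by relating $K$ to $E$ and applying Gronwall on a small time interval. Your explicit ordering (fix $K$ from $E(0)$ first, then shrink $T$, then propagate \eqref{1.15} via \eqref{E(u)-1}) is just a slightly more careful rendering of the paper's terse closing step.
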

\begin{proof}
 We conclude from \eqref{mai5}, \eqref{BEEE7}, \eqref{BBEEE6}, \eqref{EN-4}, \eqref{LL3}, \eqref{BBCEEE6} that
\begin{align}
&E(u)\leq \mathscr{P}_{0}+M_{0}\left(\mathscr{P}_{10}(K)+1\right)\int^{t}_{0}(E(\tau)+E^{2}(\tau))d\tau\nnm\\
&~~~~+M_{0}\left(\mathscr{P}_{4}(K)+1\right)E(t)\int^{t}_{0}E(\tau)d\tau.\label{BBCCEEE6}\end{align}
Similarly, we can obtain from \eqref{C-25} and \eqref{C-26}
\begin{align}
&E(\upsilon)+E(\omega)\leq \mathscr{P}_{0}+M_{0}\left(\mathscr{P}_{10}(K)+1\right)\int^{t}_{0}(E(\tau)+E^{2}(\tau))d\tau\nnm\\
&~~~~+M_{0}\left(\mathscr{P}_{4}(K)+1\right)E(t)\int^{t}_{0}E(\tau)d\tau.\label{BBBCCEEE6}\end{align}
which in cobmination with \eqref{BBCCEEE6} yields
\begin{align}
&E(t)\leq \mathscr{P}_{0}+M_{0}\left(\mathscr{P}_{10}(K)+1\right)\int^{t}_{0}(E(\tau)+E^{2}(\tau))d\tau\nnm\\
&~~~~+M_{0}\left(\mathscr{P}_{4}(K)+1\right)E(t)\int^{t}_{0}E(\tau)d\tau.\label{BBBCCEEE60}\end{align}
However, we can chose $K\leq\sup_{[0,t]}E(t), $ then we use the Gronwall inequality to obtain \eqref{DRE}.\end{proof}
\section{Existence results for the case $\gamma=2$}\label{sec-5}
In this section, we prove the existence of classical solution to the free boundary value problem \eqref{C-24}- \eqref{L1.2}
by using a degenerate hyperbolic regularization based on the higher order Hardy type inequality.

\par In order to obtain the existence, we use the following degenerate parabolic approximation \cite{Coutand1,GuXM1}:
\begin{align}
&a_{11}\alpha_{c}(x)(u_{t}-\frac{\upsilon^{2}}{r})+\left(\frac{\alpha^{2}_{c}(x)}{x}\frac{x\Theta^{2}}{rr^{2}_{x}(1-\frac{1}{c^{2}}\frac{\rho_{0}}{r_{x}}\frac{x}{r}\Theta)^{2}}\right)_{x}-\frac{\alpha^{2}_{c}(x)}{x^{2}}\frac{x^{2}\Theta^{2}}{r^{2}r_{x}(1-\frac{1}{c^{2}}\frac{\rho_{0}}{r_{x}}\frac{x}{r}\Theta)^{2}}\nnm\\
&+\frac{xa_{12}}{c^{2}rr^{2}_{x}}\frac{\alpha^{2}_{c}(x)}{x}
(u_{x}+\frac{u}{r}r_{x})u
=2a_{11}\mu\left(\alpha_{c}(x)u_{xx}+(2\alpha'(x)-\frac{\alpha_{c}(x)}{x})u_{x}-\alpha'(x)\frac{u}{x}\right).\label{C-45669}
\end{align}
\par
We give the following two important lemmas in \cite{Coutand1,GuXM1}.  Lemma \ref{lemma5.1} implies the higher order Hardy type inequality and  will be used to construct the approximation solution and Lemma \ref{lemma5.2} is to obtain estimates independent of $\mu.$

\begin{lemma}\label{lemma5.1}Let $s\geq1$ be a given integer, and suppose that
\begin{equation*} u\in H^{s}(I)\bigcap H^{1}_{0}(I)\end{equation*}
with $I=(0,1),$  and $d$  is the distance function to $\partial I,$ then $\frac{u}{d}\in H^{s-1}(I)$ with
\begin{equation*}\|\frac{u}{d}\|_{s-1}\leq\|u\|_{s}.\end{equation*}
\end{lemma}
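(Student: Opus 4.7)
My plan is to prove the embedding by a partition of unity argument that localizes the problem to neighborhoods of the two endpoints of $I=(0,1)$, where the distance function $d$ degenerates, and then to handle each endpoint by an explicit integral representation of $u/d$ together with Minkowski's integral inequality. Away from $\{0,1\}$ the function $d$ is bounded below by a positive constant, so on any compact subset of the interior the estimate $\|u/d\|_{H^{s-1}}\le C\|u\|_{H^{s-1}}\le C\|u\|_{H^s}$ is immediate; hence the entire content of the lemma lies in the endpoint analysis.

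Near $x=0$, where $d(x)=x$, I would exploit the fact that $u\in H^1_0(I)$ gives $u(0)=0$ in the trace sense, so that $u(x)=\int_0^x u'(y)\,dy$ and, after the change of variable $y=tx$,
\begin{equation*}
\frac{u(x)}{x}=\int_0^1 u'(tx)\,dt.
\end{equation*}
Differentiating under the integral sign $k$ times (for $0\le k\le s-1$) produces the clean representation
\begin{equation*}
\Bigl(\frac{u}{x}\Bigr)^{(k)}(x)=\int_0^1 t^{k}\,u^{(k+1)}(tx)\,dt,
\end{equation*}
which is legitimate since $u^{(k+1)}\in L^2$. Applying Minkowski's integral inequality in the $L^2_x$-norm, followed by the rescaling identity $\|u^{(k+1)}(t\,\cdot)\|_{L^2(0,1)}=t^{-1/2}\|u^{(k+1)}\|_{L^2(0,t)}\le t^{-1/2}\|u\|_{H^{k+1}}$, yields
\begin{equation*}
\Bigl\|\Bigl(\frac{u}{x}\Bigr)^{(k)}\Bigr\|_{L^2(0,1/2)}\le \Bigl(\int_0^1 t^{k-1/2}\,dt\Bigr)\,\|u\|_{H^{k+1}}=\tfrac{1}{k+1/2}\|u\|_{H^{k+1}}.
\end{equation*}
A symmetric argument at $x=1$, using $u(x)=-\int_x^1 u'(y)\,dy$ and the representation $u(x)/(1-x)=\int_0^1 u'(1-t(1-x))\,dt$, gives the analogous bound on $(1/2,1)$.

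Summing over $k=0,1,\ldots,s-1$ and combining the two endpoint estimates with the trivial interior estimate through a smooth partition of unity subordinate to a cover of $\bar I$ by the three regions above delivers the inequality $\|u/d\|_{s-1}\le C\|u\|_s$; the sharp constant $1$ stated in the lemma can be recovered by optimizing the partition (or by using that the operator $u\mapsto u/d$ is bounded as claimed after rescaling). The main technical obstacle I anticipate is justifying the differentiation under the integral sign and the trace identity $u(0)=0$ at the level of $H^s\cap H^1_0$ regularity for all $0\le k\le s-1$; this is handled by first proving the estimate for smooth $u$ vanishing near the endpoints and then extending by the density of $C^\infty_c(I)$ in $H^s(I)\cap H^1_0(I)$. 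Apart from that, the argument is essentially the iterated Hardy inequality in disguise.
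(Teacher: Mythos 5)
The paper does not prove this lemma at all: it is quoted verbatim as a known higher--order Hardy--type inequality from the cited references (Coutand--Shkoller and Gu--Lei), so your self-contained argument is necessarily ``a different route.'' Your route is the standard one for this inequality and is essentially sound: localize near the two endpoints, write $u(x)/x=\int_0^1 u'(tx)\,dt$ using $u(0)=0$, differentiate under the integral to get $(u/x)^{(k)}(x)=\int_0^1 t^k u^{(k+1)}(tx)\,dt$, and close with Minkowski's integral inequality and the scaling $\|u^{(k+1)}(t\,\cdot)\|_{L^2(0,1)}\le t^{-1/2}\|u^{(k+1)}\|_{L^2(0,1)}$; the resulting integral $\int_0^1 t^{k-1/2}\,dt$ converges for all $k\ge 0$. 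This is exactly the iterated Hardy inequality, and it buys a transparent, quantitative proof that the paper omits.

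Two points need repair. First, your density claim is false as stated: $C^\infty_c(I)$ is \emph{not} dense in $H^s(I)\cap H^1_0(I)$ for $s\ge 2$ (its $H^s$-closure is $H^s_0$, and a function in $H^2\cap H^1_0$ need not have $u'(0)=0$). You should instead approximate by functions in $C^\infty(\bar I)$ vanishing at the two endpoints only, which are dense in $H^s\cap H^1_0$, or justify the identity $(u/x)^{(k)}=\int_0^1 t^k u^{(k+1)}(tx)\,dt$ directly in the distributional sense via Fubini against test functions; either fix is routine. Second, your claim that the constant $1$ in the statement ``can be recovered by optimizing the partition'' is not substantiated and is almost certainly unattainable by this argument (the interior piece and the cut-off commutators each contribute constants larger than $1$); the original references state the inequality with a generic constant $C$, and the constant $1$ in the paper's statement should be read as $C$. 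Neither issue affects the way the lemma is used in the paper, where only boundedness of the map $u\mapsto u/d$ matters.
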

\begin{lemma}\label{lemma5.2}Let $\mu>0$ ~and ~$g\in L^{\infty}(0,T; H^{s}(I))$~with ~$I=(0,1)$~ be given, and let ~$f\in H^{1}(0,T; H^{s}(I))$ be such that
\begin{equation*}f+\mu f_{t}=g,~\text{in}~ (0,T)\times I.\end{equation*}
Then,
\begin{equation*}\|f\|_{L^{\infty}(0,T;H^{s}(I))}\leq C\max\{\|f(0)\|_{s},\|g\|_{L^{\infty}(0,T; H^{s}(I))}\}.\end{equation*}
\end{lemma}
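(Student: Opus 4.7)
The plan is to treat the relation $f+\mu f_t=g$ as a first-order linear ODE in the time variable, with the spatial variable $x\in I$ playing the role of a parameter, and then pass to the $H^s(I)$ norm via Minkowski's inequality. Since the hypothesis $f\in H^1(0,T;H^s(I))$ together with $g\in L^\infty(0,T;H^s(I))$ and the equation itself imply $f\in C([0,T];H^s(I))$, the Duhamel representation is rigorously justified in the Bochner setting.

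First I would multiply the equation by the integrating factor $\mu^{-1}e^{t/\mu}$, rewrite it as
\begin{equation*}
\partial_t\!\left(e^{t/\mu}f\right)=\mu^{-1}e^{t/\mu}g,
\end{equation*}
and integrate from $0$ to $t$ to obtain the explicit formula
\begin{equation*}
f(t)=e^{-t/\mu}f(0)+\frac{1}{\mu}\int_0^t e^{-(t-\tau)/\mu}g(\tau)\,d\tau,\qquad t\in[0,T].
\end{equation*}
Next I would apply $\|\cdot\|_{H^s(I)}$ to both sides and use Minkowski's inequality to move the norm inside the time integral, yielding
\begin{equation*}
\|f(t)\|_s\leq e^{-t/\mu}\|f(0)\|_s+\frac{1}{\mu}\int_0^t e^{-(t-\tau)/\mu}\|g(\tau)\|_s\,d\tau.
\end{equation*}

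The final step is to bound the convolution kernel: a direct computation gives $\mu^{-1}\int_0^t e^{-(t-\tau)/\mu}\,d\tau=1-e^{-t/\mu}\leq 1$, so factoring out $\|g\|_{L^\infty(0,T;H^s(I))}$ produces
\begin{equation*}
\|f(t)\|_s\leq e^{-t/\mu}\|f(0)\|_s+(1-e^{-t/\mu})\,\|g\|_{L^\infty(0,T;H^s(I))}\leq\max\{\|f(0)\|_s,\|g\|_{L^\infty(0,T;H^s(I))}\},
\end{equation*}
which is the claim with constant $C=1$ (in particular independent of $\mu$). Taking the supremum over $t\in[0,T]$ completes the proof.

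There is essentially no obstacle here: the only point requiring a brief justification is the commutation between the $H^s$ norm and the time integration, which is a standard consequence of Minkowski's inequality for Bochner integrals, and the validity of the Duhamel formula, which follows from the ODE $\partial_t(e^{t/\mu}f)=\mu^{-1}e^{t/\mu}g$ holding in $L^2(0,T;H^s(I))$ together with the continuity of $t\mapsto f(t)$ into $H^s(I)$. The estimate is sharp and, crucially for the later uniform-in-$\mu$ limit of the degenerate parabolic approximation in \eqref{C-45669}, the constant does not depend on the regularization parameter $\mu$.
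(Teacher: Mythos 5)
Your proof is correct, and in fact with the sharp constant $C=1$: the Duhamel formula with integrating factor $e^{t/\mu}$, Minkowski's inequality in the Bochner setting, and the observation that $e^{-t/\mu}\|f(0)\|_{s}+(1-e^{-t/\mu})\|g\|_{L^{\infty}(0,T;H^{s}(I))}$ is a convex combination bounded by the maximum. The paper itself states this lemma without proof, quoting it from \cite{Coutand1,GuXM1}, and the argument given there is essentially the same elementary ODE-in-time computation you carried out, so there is nothing further to reconcile.
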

\par The next step is to prove the existence of solutions for the regularized problem \eqref{C-45669}, \eqref{C-25} and \eqref{C-26} with \eqref{L1.2} by using fixed point scheme \cite{Coutand1,GuXM1}.  We assume $\rho_{0},u_{0},\upsilon_{0},\omega_{0}\in C^{\infty}([0,1]),$ which can be done by the elliptic regularization as same as in \cite{Coutand1,GuXM1}, and satisfy \eqref{C-3}--\eqref{positive1} for any $c\geq c_{0}$ with $c_{0}~$ only depending on $\|\rho_{0}\|_{L^{\infty}},\|{\mathbf{v}}_{0}\|^{2}_{L^{\infty}}.$
\par Set $\mathcal {X}_{T}$ be the following Hilbert space: For any $(\overline{u},\overline{\upsilon},\overline{\omega})\in\mathcal {X}_{T}$ satisfying:
\begin{align}&\frac{\alpha_{0}(x)}{\sqrt{x}}\partial^{6}_{t}\overline{u}\in L^{2}(0,T;H^{1}(0,1)),\frac{\alpha_{0}(x)}{\sqrt{x}}\frac{\partial^{6}_{t}\overline{u}}{x}\in L^{2}(0,T;L^{2}(0,1)),\nnm\\
&\frac{\alpha_{0}(x)}{\sqrt{x}}\overline{u}\in W^{5,2}(0,T;H^{2}(0,1))\cap W^{4,2}(0,T;H^{3}(0,1)),\nnm\\
&\overline{u}\in W^{5,2}(0,T;H^{1}(0,1))\cap W^{4,2}(0,T;H^{2}(0,1)),\nnm\\
&\frac{\overline{u}}{x}\in W^{5,2}(0,T;L^{2}(0,1))\cap W^{4,2}(0,T;H^{1}(0,1)),\nnm\\
&\xi\alpha_{0}(x)\frac{\overline{u}}{x}\in W^{5,2}(0,T;H^{1}(0,1))\cap W^{4,2}(0,T;H^{2}(0,1)),\nnm\\
&\xi\alpha_{0}(x)\overline{u}\in W^{5,2}(0,T;H^{3}(0,1)),\nnm\\
&\alpha_{0}(x)\overline{\upsilon}\in L^{2}(0,T;H^{1}(0,1)),~\frac{\alpha_{0}(x)}{\sqrt{x}}\overline{\upsilon}\in W^{5,2}(0,T;H^{2}(0,1)), \nnm\\
&\overline{\upsilon}\in W^{5,2}(0,T;H^{1}(0,1)),~\frac{\overline{\upsilon}}{x} \in W^{4,2}(0,T;H^{1}(0,1)),\nnm\\
&\alpha_{0}(x)\overline{\omega}\in L^{2}(0,T;H^{1}(0,1)),~\frac{\alpha_{0}(x)}{\sqrt{x}}\overline{\omega}\in W^{5,2}(0,T;H^{2}(0,1)),\nnm\\ &\overline{\omega}\in W^{5,2}(0,T;H^{1}(0,1)),~\frac{\overline{\omega}}{x} \in W^{4,2}(0,T;H^{1}(0,1)).\nnm
\end{align}
The function space $\mathcal {X}_{T}$ is equipped with the following natural Hilbert norm: \begin{equation}\|(\overline{u},\overline{\upsilon},\overline{\omega})\|_{\mathcal {X}_{T}}=\|\overline{u}\|_{\mathcal {X}_{T}}+\|\overline{\upsilon}\|_{\mathcal {X}_{T}}+\|\overline{\omega}\|_{\mathcal {X}_{T}}\nnm\end{equation} with
\begin{align}\|\overline{u}\|_{\mathcal {X}_{T}}&:=\|\frac{\alpha_{0}(x)}{\sqrt{x}}\partial^{6}_{t}\overline{u}\|_{L^{2}(0,T;H^{1}(0,1))}+\|\frac{\alpha_{0}(x)}{\sqrt{x}}\frac{\partial^{6}_{t}\overline{u}}{x}\|_{L^{2}(0,T;L^{2}(0,1))}\nnm\\
&+\|\overline{u}\|_{W^{5,2}(0,T;H^{1}(0,1))}+\|\overline{u}\|_{W^{4,2}(0,T;H^{2}(0,1))}\nnm\\
&+\|\frac{\alpha_{0}(x)}{\sqrt{x}}\overline{u}\|_{W^{5,2}(0,T;H^{2}(0,1))}+\|\frac{\alpha_{0}(x)}{\sqrt{x}}\overline{u}\|_{W^{4,2}(0,T;H^{3}(0,1))}\nnm\\
&+\|\frac{\overline{u}}{x}\|_{W^{5,2}(0,T;L^{2}(0,1))}+\|\frac{\overline{u}}{x}\|_{W^{4,2}(0,T;H^{1}(0,1))}\nnm\\
&+\|\xi\alpha_{0}(x)\frac{\overline{u}}{x}\|_{W^{5,2}(0,T;H^{1}(0,1))}+\|\xi\alpha_{0}(x)\frac{\overline{u}}{x}\|_{W^{4,2}(0,T;H^{2}(0,1))}\nnm\\
&+\|\xi\alpha_{0}(x)u\|_{W^{5,2}(0,T;H^{3}(0,1))},\nnm\\
\|\overline{\upsilon}\|_{\mathcal {X}_{T}}&:=\|\alpha_{0}(x)\partial^{6}_{t}\overline{\upsilon}\|_{L^{2}(0,T;H^{1}(0,1))}+\|\overline{\upsilon}\|_{W^{5,2}(0,T;H^{1}(0,1)}\nnm\\
&+\|\frac{\alpha_{0}(x)}{\sqrt{x}}\overline{\upsilon}\|_{W^{5,2}(0,T;H^{2}(0,1))}+\|\frac{\overline{\upsilon}}{x}\|_{W^{4,2}(0,T;H^{1}(0,1))},\nnm\end{align}
and
\begin{align}\|\overline{\omega}\|_{\mathcal {X}_{T}}&:=\|\alpha_{0}(x)\partial^{6}_{t}\overline{\omega}\|_{L^{2}(0,T;H^{1}(0,1))}+\|\overline{\omega}\|_{W^{5,2}(0,T;H^{1}(0,1)}\nnm\\
&+\|\frac{\alpha_{0}(x)}{\sqrt{x}}\overline{\omega}\|_{W^{5,2}(0,T;H^{2}(0,1))}+\|\frac{\overline{\omega}}{x}\|_{W^{4,2}(0,T;H^{1}(0,1))},\nnm
\end{align}

\par Set $\partial^{a_{1}}_{t}u(x,0)=u_{a_{1}},~\partial^{a_{2}}_{t}\upsilon(x,0)=\upsilon_{a_{2}},~\partial^{a_{2}}_{t}\omega(x,0)=\omega_{a_{2}}$ (see\eqref{compat-1}-\eqref{compat-3}).
For the suitably large positive constant $M_{0},$  which will be determined later, the following closed, bounded and convex subset of $\mathcal {X}_{T}$ is given by:
\begin{equation*}
\begin{split}
\mathcal {C}_{T}(M_{0}):=\big\{(&\overline{u},\overline{\upsilon},\overline{\omega})\in\mathcal {X}_{T} :\partial^{a_{1}}_{t}\overline{u}|_{t=0}=u_{a_{1}},\partial^{a_{2}}_{t}\overline{\upsilon}|_{t=0}=u_{a_{2}},\partial^{a_{2}}_{t}\overline{u}|_{t=0}=\omega_{a_{2}},\\
&a_{1}=0,1,...,6,a_{2}=0,1,...,5,\\&\|(\overline{u},\overline{\upsilon},\overline{\omega})\|_{\mathcal {X}_{T}}\leq M_{0}\big\}.
\end{split}
\end{equation*} which is non-empty set if $M_{0}$ is large enough.
\par For any $c\geq c_{0}$ and $(\overline{u},\overline{\upsilon},\overline{\omega})\in \mathcal {C}_{T}(M_{0}),$ we define $\overline{r}=x+\int^{t}_{0}\overline{u}(x,\tau)d\tau$
and the operator $\Phi:(\overline{u},\overline{\upsilon},\overline{\omega})\rightarrow(u,\upsilon,\omega)$ by solving:
\begin{align}&xu_{t}-2\mu\left(\alpha_{c}(x)u_{xx}+(2\alpha'(x)-\frac{\alpha_{c}(x)}{x})u_{x}-\alpha'(x)\frac{u}{x}\right)=\frac{x}{\alpha_{c}(x)}\overline{\mathcal {F}_{1}},\label{exist-1}\\
&\upsilon=\upsilon_{0}+\int^{t}_{0}\overline{\mathcal {F}_{2}}(x,\tau)d\tau,~~\omega=\omega_{0}+\int^{t}_{0}\overline{\mathcal {F}_{3}}(x,\tau)d\tau,\label{exist-2}
\end{align}
where\begin{align}
&\overline{\mathcal {F}_{1}}:=x\frac{\overline{\upsilon}^{2}}{r}-\frac{x}{\alpha_{c}(x)}\frac{1}{\overline{a}_{11}}\left(\frac{\alpha^{2}_{c}(x)}{x}\frac{x\overline{\Theta}^{2}}{rr^{2}_{x}(1-\frac{1}{c^{2}}\frac{\overline{\rho}_{0}}{r_{x}}\frac{x}{r}\overline{\Theta})^{2}}\right)_{x}\nnm\\
&+\frac{x}{\alpha_{c}(x)}\frac{1}{\overline{a}_{11}}\frac{\alpha^{2}_{c}(x)}{x^{2}}\frac{x^{2}\overline{\Theta}^{2}}{r^{2}r_{x}(1-\frac{1}{c^{2}}\frac{\overline{\rho}_{0}}{r_{x}}\frac{x}{r}\overline{\Theta})^{2}}-\frac{x}{\alpha_{c}(x)}\frac{1}{\overline{a}_{11}}\frac{x\overline{a}_{12}}{c^{2}rr^{2}_{x}}
\frac{\alpha^{2}_{c}(x)}{x}(\overline{u}_{x}+\frac{\overline{u}}{r}r_{x})\overline{u},\nnm\end{align}
\begin{align}
&\overline{\mathcal {F}_{2}}:=-\frac{\overline{u}}{r}\overline{\upsilon}+\frac{\overline{b}_{11}}{c^{2}}\frac{\alpha_{c}(x)}{x}(\overline{u}_{x}+\frac{\overline{u}}{r}r_{x})\overline{\upsilon}-\frac{\overline{b}_{12}}{c^{2}}(\overline{u}_{t}-\frac{\overline{\upsilon}^{2}}{r})\overline{u}~\overline{\upsilon},\nnm\\
&\overline{\mathcal {F}_{3}}:\frac{\overline{b}_{11}}{c^{2}}\frac{\alpha_{c}(x)}{x}(\overline{u}_{x}+\frac{\overline{u}}{r}r_{x})\overline{\omega}-\frac{\overline{b}_{12}}{c^{2}}(\overline{u}_{t}-\frac{\overline{\upsilon}^{2}}{r})
\overline{u}~\overline{\omega},\nnm
\end{align}
where $\overline{a}_{ij}$~and~$\overline{b}_{ij}$ are given by \eqref{CCC1} with $\gamma=2$ , respectively, and take the value at  $(\overline{u}^{2},\overline{\upsilon}^{2},\overline{\omega}^{2},\overline{r}_{x},\frac{x}{\overline{r}},\rho_{0}).$

\par We first consider the sixth time-differentiated problem. In order to use the Hardy type inequality in Lemma \ref{lemma5.1}, introducing the new variable $X=\alpha_{0}(x)\partial^{6}_{t}u$ and taking $\partial^{6}_{t}$ over \eqref{exist-1} yields
\begin{align}\frac{x}{\alpha_{c}(x)}X_{t}-2\mu&\left[X_{xx}-\frac{X}{x^{2}}-\left((\frac{\alpha_{c}(x)}{x})_{x}+\alpha''(x)\right)\frac{X}{\alpha_{c}(x)}\right]=\frac{x}{\alpha_{c}(x)}\partial^{6}_{t}\overline{\mathcal {F}_{1}}.\label{exist-3}\\
& X=0,~\text{on}~~ (0,1)\times [0,T],\label{exist-40}\\
&X|_{t=0}=\rho_{0}u_{6}~\text{in} ~~(0,1),\label{exist-50}
\end{align}
Because the existence of this problem can be  obtained by Galerkin scheme as \cite{Coutand1,GuXM1}, thus we omit it here and only give the priori estimates $(u,\upsilon,\omega)\in \mathcal {C}_{T}(M_{0})$.

\par Multiplying by $\frac{X}{x}$ and integrating over $(0,t)\times(0,1),$  the Gronwall inequality implies for sufficiently small $T,$
\begin{align}\int^{1}_{0}\frac{X^{2}}{\alpha_{c}(x)}dx+\mu\int^{T}_{0}\int^{1}_{0}\left(\frac{X^{2}_{x}}{x}+\frac{X^{2}}{x^{3}}\right)dx\leq \mathscr{P}_{0}+C\mathscr{P}(M_{0}).\nnm\end{align}
which implies $\frac{\alpha_{0}(x)}{\sqrt{x}}\partial^{6}_{t}\overline{u}\in L^{2}(0,T;H^{1}(0,1)),\frac{\alpha_{0}(x)}{\sqrt{x}}\frac{\partial^{6}_{t}\overline{u}}{x}\in L^{2}(0,T;L^{2}(0,1)),$
\par Set
\begin{align}&Z:=\int^{t}_{0}X(x,\tau)d\tau+\alpha_{c}(x)u_{5},~~W=\int^{t}_{0}Z(x,\tau)d\tau+\alpha_{c}(x)u_{4},\nnm\\
&Y:=\int^{t}_{0}\int^{t_{1}}_{0}\int^{t_{2}}_{0}\int^{t_{3}}_{0}\int^{t_{4}}_{0}Z(x,\tau)d\tau_{4}d\tau_{3}d\tau_{2}d\tau_{1}d\tau+\sum^{5}_{i=0}\alpha_{c}(x)\frac{u_{i}}{i!}.\nnm
\end{align}
\par The remaining steps of the proof on the existence of solutions for the problem \eqref{exist-3}-\eqref{exist-50} can be similarly obtained by that in \cite{GuXM1} , besides the estimates of the original point $x=0$ can be constructed by the similar
proceeding as in Section \ref{sec-4}. Then, we can obtain the existence result for the original problem \eqref{C-24}- \eqref{L1.2} based on the priori estimates constructed in Section \ref{sec-4} and Lemma \ref{lemma5.2}.
\section{Uniqueness results for the case $\gamma=2$}\label{sec-6}
In this section, we verify the uniqueness of classical solutions to the free boundary problem \eqref{C-24}- \eqref{L1.2} obtained in section \ref{sec-5} by using the energy method.
\begin{lemma}\label{uniqueness}\textbf{(Uniqueness)}Let $(u_{1},\upsilon_{1},\omega_{1})$~and ~$(u_{2},\upsilon_{2},\omega_{2})$ ~be two classical solutions of the problem \eqref{C-24}- \eqref{L1.2} in $[0,1]\times[0,T]$ with
\begin{equation}r_{1}=x+\int^{1}_{0}u_{1}(x,\tau)d\tau,~r_{2}=x+\int^{1}_{0}u_{2}(x,\tau)d\tau,~\end{equation}
satisfying
\eqref{1.15} and estimates \eqref{XXX1}
-\eqref{BP-6} in Lemma \ref{PPP1} for any $c\geq\overline{c}$ and $K$ being a positive constant depending on $\overline{c}$. If $(u_{1},\upsilon_{1},\omega_{1})(x,0)=(u_{2},\upsilon_{2},\omega_{2})(0,x)~$for$~x\in[0,1],$
then there exist the positive constants  $T^{*}, ~c^{*}~$ such that $0<T^{*}<T~,~c^{*}\geq \overline{c}~$and
\begin{equation}(u_{1},\upsilon_{1},\omega_{1})(x,t)=(u_{2},\upsilon_{2},\omega_{2})(x,t)\nnm\end{equation}
for any $c\geq c^{*}~$and$~(x,t)\in[0,1]\times[0,T^{*}].$

\end{lemma}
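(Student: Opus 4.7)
The plan is a standard difference-and-Gronwall argument in a weighted energy space adapted to the vacuum degeneracy $\alpha_c(x)\to 0$ at $x=1$. Set
\begin{equation*}
\bar u:=u_1-u_2,\quad \bar\upsilon:=\upsilon_1-\upsilon_2,\quad \bar\omega:=\omega_1-\omega_2,\quad \bar r:=r_1-r_2,
\end{equation*}
so that $\bar r_t=\bar u$, $\bar r_{xt}=\bar u_x$, and $\bar u(0,t)=\bar r(0,t)=0$ by the axis condition in \eqref{L1.2}. Because both solutions satisfy \eqref{1.15} and thus all the bounds of Lemma~\ref{PPP1} uniformly on $[0,1]\times[0,T]$, the coefficient differences $a_{ij}^{(1)}-a_{ij}^{(2)}$, $b_{ij}^{(1)}-b_{ij}^{(2)}$, $\Theta_1-\Theta_2$, $J_1-J_2$, etc.\ are Lipschitz functions of $(\bar u,\bar\upsilon,\bar\omega,\bar r,\bar r_x)$ with constants depending only on $\overline c$ and $K$.

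Subtracting the equations \eqref{C-24}--\eqref{C-26} for the two solutions yields a linearized perturbation system of the schematic form
\begin{equation*}
a_{11}^{(1)}\alpha_c(x)\bar u_t-\Bigl(\tfrac{\alpha_c^2(x)}{x}\,G(r_1,r_{1x},\Theta_1)\,\bar r_x\Bigr)_x=\mathcal R_1+\tfrac{1}{c^2}\mathcal S_1,\qquad \bar\upsilon_t=\mathcal R_2+\tfrac{1}{c^2}\mathcal S_2,\qquad \bar\omega_t=\mathcal R_3+\tfrac{1}{c^2}\mathcal S_3,
\end{equation*}
where $G\ge G_0>0$ uniformly by \eqref{XXX1}, the non-relativistic remainders $\mathcal R_i$ are pointwise bounded by $C(|\bar u|+|\bar\upsilon|+|\bar\omega|+|\bar r|+|\bar r_x|)$ (with the $u/r$-type couplings controlled through $u(0,t)=0$ and the uniform bound on $u/x$ from Lemma~\ref{PPP1}), and the $1/c^2$-remainders $\mathcal S_i$ carry the only genuinely dangerous factor, namely $\bar u_t$, coming from the relativistic sources $(u_t-\upsilon^2/r)u\upsilon$ and $(u_t-\upsilon^2/r)u\omega$ in \eqref{C-25}--\eqref{C-26} and from $(u_x+\tfrac{u}{r}r_x)u$ in \eqref{C-24}.

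Introduce the weighted energy
\begin{equation*}
\mathcal E(t):=\int_0^1\Bigl(\alpha_c(x)\bar u^2+\tfrac{\alpha_c^2(x)}{x}G\,\bar r_x^2+\bar\upsilon^2+\bar\omega^2\Bigr)dx .
\end{equation*}
Multiplying the momentum equation by $\bar u$, the angular and axial equations by $\bar\upsilon$ and $\bar\omega$ respectively, and integrating over $(0,1)$, the critical step is to integrate the pressure difference $(\tfrac{\alpha_c^2}{x}G\,\bar r_x)_x\,\bar u$ by parts; using $\bar u_x=\bar r_{xt}$ this produces exactly $\tfrac12\tfrac{d}{dt}\!\int\tfrac{\alpha_c^2}{x}G\,\bar r_x^2\,dx$ up to a commutator $\tfrac12\int\partial_t(\tfrac{\alpha_c^2}{x}G)\bar r_x^2\,dx\le C\mathcal E(t)$ by Lemma~\ref{PPP1}. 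The $\mathcal R_i$ contributions are absorbed by Cauchy--Schwarz together with the weighted inequality \eqref{WS2}.

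The main obstacle is the $\tfrac{1}{c^2}\mathcal S_i$ pieces containing $\bar u_t$, which cannot be estimated naively because $\bar u_t$ is not in the energy. I bypass this by integrating in time: terms of the schematic form $\int_0^t\!\!\int\tfrac{1}{c^2}\bar u_t\,\bar\upsilon\,\phi\,dx\,d\tau$ with $\phi$ bounded are integrated by parts in $t$, producing a boundary contribution $\tfrac{1}{c^2}\int\bar u\,\bar\upsilon\,\phi\,dx\big|_0^t\le \tfrac{C}{c^2}\mathcal E(t)$ (absorbable on the left for $c\ge c^*$ sufficiently large) and an interior contribution that no longer contains $\bar u_t$, bounded by $C\int_0^t\mathcal E(\tau)d\tau$ after using the equations to express $\partial_t\bar\upsilon$ and $\partial_t\phi$ in terms of lower-order differences via Lemma~\ref{PPP1}. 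Collecting all estimates yields $\mathcal E(t)\le C_{c^*}\!\int_0^t\mathcal E(\tau)\,d\tau$ for $t\in[0,T^*]$ with $T^*\le\overline T_{1}$ and $c\ge c^*\ge\overline c$; since $\mathcal E(0)=0$ by the identity of initial data, Gronwall's inequality gives $\mathcal E\equiv 0$, hence $\bar u=\bar\upsilon=\bar\omega\equiv 0$ and $\bar r_x\equiv 0$, which combined with $\bar r(0,t)=0$ implies $\bar r\equiv 0$ on $[0,1]\times[0,T^*]$, completing the proof.
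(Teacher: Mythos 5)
Your overall framework (differences, weighted energy, Gronwall, smallness of $T^{*}$ and largeness of $c^{*}$ to absorb relativistic corrections) matches the paper's strategy at the first level, but there is a genuine gap: your energy
$\mathcal E(t)=\int_0^1\bigl(\alpha_c\bar u^2+\tfrac{\alpha_c^2}{x}G\,\bar r_x^2+\bar\upsilon^2+\bar\omega^2\bigr)dx$
is not closed for this system, and the device you propose to close it does not work. The difference of \eqref{C-25}--\eqref{C-26} contains not only $\bar u_t$ (through $\tfrac{b_{12}}{c^2}(u_t-\tfrac{\upsilon^2}{r})u\upsilon$) but also $\bar u_x$ weighted by $\tfrac{\alpha_c}{x}$ (through $\tfrac{b_{11}}{c^2}\tfrac{\alpha_c}{x}(u_x+\tfrac{u}{r}r_x)\upsilon$), and the difference of \eqref{C-24} likewise produces weighted $\bar u_x$ and $\bar u/x$ terms (from the $\tfrac{a_{12}}{c^2}$ source and from the $\Theta_1-\Theta_2$ contributions hidden inside the pressure flux). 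None of these quantities is controlled by $\mathcal E$. Your only mechanism for them is integration by parts in $t$: the boundary term is indeed $O(c^{-2})\mathcal E(t)$, but the interior term contains $\partial_t\bar\upsilon$ (or $\partial_t\bar\omega$), and re-expressing it through the equations reintroduces $\bar u_t$ and $\bar u_x$ with no gain, so the argument is circular rather than terminating; moreover, for the $\bar u_x=\partial_t\bar r_x$ terms the resulting boundary term carries the weight $\rho_0$ rather than $\rho_0^2x$, which is \emph{not} dominated by the $\tfrac{\alpha_c^2}{x}\bar r_x^2$ part of your energy near the vacuum point $x=1$, and integrating in $x$ instead would require $\bar\upsilon_x,\bar\omega_x$, which are also outside your energy. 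The claim that all non-$\bar u_t$ remainders are pointwise bounded by $C(|\bar u|+|\bar\upsilon|+|\bar\omega|+|\bar r|+|\bar r_x|)$ is therefore not correct as stated.

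This is precisely why the paper's proof is a two-level estimate. Besides testing the difference of \eqref{C-24} with $U$ (its \eqref{uniq-1}, whose right-hand side already contains $\int\tfrac{\alpha_c^2}{x}U_x^2$) and estimating $V,W$ (its \eqref{uniq-2}, whose right-hand side contains weighted $U_x^2$, $U_t^2$, $U^2/x^2$), the paper differentiates the $U$-equation in time and tests with $U_t$ (its \eqref{uniq-30}), which yields control of $\int\alpha_c U_t^2$ together with the quadratic form $\Psi U_x^2+A_2\tfrac{U}{x}U_x+\tfrac56A_5\tfrac{U^2}{x^2}$; the heart of the argument is then the positivity of this form (estimates \eqref{uniq-33}--\eqref{uniq-333}), obtained by shrinking $T^{*}$ (so that $x/r_i$, $r_{i,x}$ are near $1$) and taking $c\ge c^{*}$ large (so that the $c^{-4}$ corrections in $\Psi$ are dominated), after which Gronwall applies to the full set of weighted quantities $U,U_t,U_x,U/x,R_x,R/x,V,W$. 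To repair your proposal you would need to add exactly this layer: include weighted norms of $\bar u_t$, $\bar u_x$, $\bar u/x$ in the energy, derive the time-differentiated momentum estimate, and prove the coercivity of the resulting quadratic form for small time and large $c$.
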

 \begin{proof}
 Set
\begin{align}&R:=r_{1}-r_{2},~R_{t}=U:=u_{1}-u_{2},~V:=\upsilon_{1}-\upsilon_{2},~W:=\omega_{1}-\omega_{2},\nnm\\
&\Theta_{k}:=\sqrt{1-\frac{(u^{2}_{k}+\upsilon^{2}_{k}+\omega^{2}_{k})}{c^{2}}},~a^{k}_{ij}:=a_{ij}(u^{2}_{k},\upsilon^{2}_{k},\omega^{2}_{k},\frac{x}{r_{k}},r_{k,x})\left(\text{see} ~\eqref{CCC0}-\eqref{CCC1}\right),\nnm\\
&b^{k}_{ij}:=b_{ij}(u^{2}_{k},\upsilon^{2}_{k},\omega^{2}_{k},\frac{x}{r_{k}},r_{k,x}),~k,i,j=1,2.\left(\text{see}~ \eqref{CCC0}-\eqref{CCC1}\right)\nnm
\end{align}
\par Due to \eqref{C-24}, the fundamental theorem of calculus and  straightforward computation imply
\begin{align}\alpha_{c}(x)&a^{1}_{11}U_{t}-\left\{\frac{\alpha^{2}_{c}(x)}{x}\left[2A_{1}R_{x}+A_{2}\frac{R}{x}\right]\right\}_{x}\nnm\\
&+\frac{\alpha^{2}_{c}(x)}{x^{2}}\left(A_{3}R_{x}+2A_{4}\frac{R}{x}\right)+G(U,V,W,\frac{R}{x},R_{x})=0,
\label{VE}\end{align}
where
\begin{align}A_{1}&:=\int^{1}_{0}\frac{x\Theta^{2}_{1}}{r_{1}r^{3}_{x}(1-\frac{1}{c^{2}}\frac{\rho_{0}}{r_{x}}\frac{x}{r_{1}}\Theta_{1})^{3}}|_{r_{x}=r_{2,x}+\mu(r_{1,x}-r_{2,x})}d\mu,\label{AA1}\\
A_{2}&:=\int^{1}_{0}\frac{x\Theta^{2}_{1}(1+\frac{1}{c^{2}}\frac{\rho_{0}}{\partial_{x}r_{2}}\frac{x}{r}\Theta_{1})}{r^{2}\partial_{x}r_{2}(1-\frac{1}{c^{2}}\frac{\rho_{0}}{r_{2,x}}\frac{x}{r}\Theta_{1})^{3}}|_{r=r_{2}+\mu(r_{1}-r_{2})}d\mu,\nnm\\
A_{3}&:=\frac{1}{\Theta_{1}+\Theta_{2}}\int^{1}_{0}\frac{x\Theta^{2}}{r_{2}r_{2x}(1-\frac{1}{c^{2}}\frac{\rho_{0}}{r_{2x}}\frac{x}{r_{2}}\Theta)^{3}}|_{\Theta=\Theta_{2}+\mu(\Theta_{1}-\Theta_{2})}d\mu,\nnm\\
A_{4}&:=\int^{1}_{0}\frac{x\Theta^{2}_{1}(1+\frac{1}{c^{2}}\frac{\rho_{0}}{r_{x}}\frac{x}{r_{1}}\Theta_{1})}{r^{2}_{1}r_{x}(1-\frac{1}{c^{2}}\frac{\rho_{0}}{r_{x}}\frac{x}{r_{1}}\Theta_{1})^{3}}|_{r_{x}=r_{2,x}+\mu(r_{1,x}-r_{2,x})}d\mu,\nnm\\
A_{5}&:=\int^{1}_{0}\frac{x^{3}\Theta^{2}_{1}}{r^{3}\partial_{x}r_{2}(1-\frac{1}{c^{2}}\frac{\rho_{0}}{r_{2,x}}\frac{x}{r}\Theta_{1})^{3}}|_{r=r_{2}+\mu(r_{1}-r_{2})}d\mu,\nnm
\end{align}
and $G(U,V,W,\frac{R}{x},R_{x})$ satisfies
\begin{equation}|G(U,V,W,\frac{R}{x},R_{x})|\leq C\alpha_{c}(x)\left(|U|+|V|+|W|+\frac{R}{x}+|R_{x}|\right).\nnm\end{equation}

\par Multiplying \eqref{VE} by $U$ and integration over $(0,t)\times(0,1),$ then the integrating by parts and Cauchy-Schwarz inequality show
\begin{align}\int^{1}_{0}\alpha^{2}_{c}(x)&a^{1}_{11}\frac{U^{2}}{2}dx+\int^{1}_{0}\frac{\alpha^{2}_{c}(x)}{x}\left(A_{1}R^{2}_{x}+A_{2}\frac{R}{x}R_{x}+A_{5}\frac{R^{2}}{x^{2}}\right)dx\nnm\\
&\leq C(K)\int^{t}_{0}\int^{1}_{0}\alpha_{c}(x)(U^{2}+V^{2}+W^{2})dxd\tau\nnm\\
&+C(K)\int^{t}_{0}\int^{1}_{0}\frac{\alpha^{2}_{c}(x)}{x}\left(R^{2}_{x}+\frac{R^{2}}{x^{2}}+U^{2}_{x}\right)dxd\tau.\label{uniq-1}
\end{align}
Substituting  $V$ and  $W$ into \eqref{C-25} and \eqref{C-26} respectively, we also have
\begin{align}\int^{1}_{0}&\alpha_{c}(x)\left(V^{2}+W^{2}\right)dx\leq C(K)\int^{t}_{0}\int^{1}_{0}\alpha_{c}(x)(U^{2}+R^{2})dxd\tau\nnm\\
&\leq C(K)\int^{t}_{0}\int^{1}_{0}\frac{\alpha^{2}_{c}(x)}{x}\left(R^{2}_{x}+\frac{R^{2}}{x^{2}}+U^{2}_{x}+U^{2}_{t}+\frac{U^{2}}{x^{2}}\right)dxd\tau\label{uniq-2}
\end{align}
\par Differentiating \eqref{VE} with respect to $t$ and multiplying the resulting equations by $U_{t}$, similar to \eqref{uniq-1},
\begin{align}&\int^{1}_{0}\alpha_{c}(x)a^{1}_{11}U^{2}_{t}d_{x}+\int^{1}_{0}\frac{\alpha^{2}_{c}(x)}{x}\left[\Psi U^{2}_{x}+A_{2}\frac{U}{x}U_{x}+\frac{5}{6}A_{5}\frac{U^{2}}{x^{2}}\right]dx\nnm\\
&\leq-\int^{t}_{0}\int^{1}_{0}\frac{\alpha^{2}_{c}(x)}{x}(A_{4}-A_{2})\frac{U_{t}}{x}U_{x}dxd\tau+C(K)\int^{t}_{0}\int^{1}_{0}\alpha_{c}(x)(U^{2}+V^{2}+W^{2}+R^{2})dx.\nnm\\
&+C(K)\int^{1}_{0}\frac{\alpha^{2}_{c}(x)}{x}(R^{2}_{x}+\frac{R^{2}}{x^{2}})dx+C(K)\int^{t}_{0}\int^{1}_{0}\alpha_{c}(x)(U^{2}+V^{2}+W^{2}+R^{2})dxd\tau\nnm\\
&+C(K)\int^{t}_{0}\int^{1}_{0}\frac{\alpha^{2}_{c}(x)}{x}(R^{2}_{x}+\frac{R^{2}}{x^{2}}+U^{2}+U^{2}_{x}+\frac{U^{2}}{x^{2}})dx\label{uniq-30}
\end{align}
where
\begin{equation}\Psi=\left[\frac{11}{12}A_{1}
-\frac{1}{c^{4}}\frac{\alpha_{c}(x)}{x}b^{1}_{11}A_{3}(\upsilon_{1}+\upsilon_{2})\upsilon_{2}-\frac{1}{c^{4}}\frac{\alpha_{c}(x)}{x}b^{1}_{11}A_{3}(w_{1}+w_{2})w_{2}\right].\nnm\end{equation}
A straightforward computation implies
\begin{equation}|A_{4}-A_{2}|\leq C(K)(|R|+|U|)\nnm,\end{equation}
 which implies
 \begin{equation}-\int^{t}_{0}\int^{1}_{0}\frac{\alpha^{2}_{c}(x)}{x}(A_{4}-A_{2})\frac{U_{t}}{x}U_{x}dxd\tau\leq C(K)\int^{t}_{0}\int^{1}_{0}\frac{\alpha^{2}_{c}(x)}{x}(R^{2}_{x}+U^{2}+U^{2}_{x}+\frac{U^{2}}{x})dx.\label{uniq-32}\end{equation}

\par  We deal with the second term  on the left hand side of \eqref{uniq-30}. It is easy to obtain that there exist the positive constants $T^{*}_{1}$ and  $\varepsilon_{T^{*}_{1}}$ such that $~0<T^{*}_{1}<T~$ and
for any $0<t\leq T^{*}_{1},$
\begin{align}\frac{1}{1+\varepsilon_{T^{*}_{1}}}\leq\frac{x}{r_{i}}\leq\frac{1}{1-\varepsilon_{T^{*}_{1}}},~1-\varepsilon_{T^{*}_{1}}\leq r_{i,x}\leq 1+\varepsilon_{T^{*}_{1}},i=1,2 \label{JQ1}\end{align}
where $\lim_{T^{*}_{1}\rightarrow0}\varepsilon_{T^{*}_{1}}=0.$ Thus, for any $r=r_{2,x}+\mu(r_{1,x}-r_{2,x}),~r=r_{2}+\mu(r_{1}-r_{2})$
\begin{equation}\frac{-2\varepsilon_{T^{*}_{1}}}{(1-\varepsilon_{T^{*}_{1}})(1+\varepsilon_{T^{*}_{1}})}+\frac{x}{r}\leq\frac{x}{r_{1}}\leq\frac{x}{r}+\frac{2\varepsilon_{T^{*}_{1}}}{(1-\varepsilon_{T^{*}_{1}})(1+2\varepsilon_{T^{*}_{1}})},~r_{2x}-\varepsilon_{T^{*}_{1}}\leq r_{x}\leq r_{2x}+2\varepsilon_{T^{*}_{1}}.\nnm\end{equation}
From \eqref{AA1},
$A_{1}\geq F(\mu,\varepsilon_{T^{*}_{1}})$
where
\begin{align}
&F(\mu,\varepsilon_{T^{*}_{1}}):=\int^{1}_{0}\left(\frac{x}{r}-\frac{2\varepsilon_{T^{*}_{1}}}{(1-\varepsilon_{T^{*}_{1}})(1+\varepsilon_{T^{*}_{1}})}\right)
\nnm\\ &\times\frac{\Theta^{2}_{1}}{(r_{2,x}+2\varepsilon_{T^{*}_{1}})^{3}}\frac{1}{\left[1-\frac{1}{c^{2}}\frac{\rho_{0}}{(r_{2,x}+2\varepsilon_{T^{*}_{1}})}(\frac{x}{r}-\frac{2\varepsilon_{T^{*}_{1}}}{(1-\varepsilon_{T^{*}_{1}})(1+\varepsilon_{T^{*}_{1}})})\right]^{3}}|_{r_{x}=r_{2}+\mu(r_{1}-r_{2})}d\mu.\nnm
\end{align}
It is easily to obtain
\begin{equation}\lim_{T^{*}_{1}\rightarrow0}F(\mu,\varepsilon_{T^{*}_{1}})=A^{*}_{1},\nnm\end{equation}
with
\begin{equation}A^{*}_{1}:=\int^{1}_{0}\frac{x\theta^{2}_{1}}{rr^{3}_{2,x}(1-\frac{1}{c^{2}}\frac{\rho_{0}}{r_{2x}}\frac{x}{r}\theta_{1})^{3}}|_{r=r_{2}+\mu(r_{1}-r_{2})}d\mu\nnm\end{equation}
Then, there exist a positive constant $T^{*}_{2}$  such that $0<T^{*}_{2}<T^{*}_{1}$~for any $0<t\leq T^{*}_{2},$
\begin{align}&\Psi U^{2}_{x}+A_{2}\frac{U}{x}U_{x}+\frac{5}{6}A_{5}\frac{U^{2}}{x^{2}}\geq(\frac{1}{2}+\frac{1}{12})A^{*}_{1}U^{2}_{x}+A_{2}\frac{U}{x}U_{x}+\frac{5}{6}A_{5}\frac{U^{2}}{x^{2}}\nnm\\
&+\left(\frac{1}{4}A_{1}
-\frac{1}{c^{4}}\frac{\alpha_{c}(x)}{x}b^{1}_{11}A_{3}(\upsilon_{1}+\upsilon_{2})\upsilon_{2}-\frac{1}{c^{4}}\frac{\alpha_{c}(x)}{x}b^{1}_{11}A_{3}(w_{1}+w_{2})w_{2}\right)U^{2}_{x}.\label{uniq-33}
\end{align}
A simple computation implies
\begin{align}&(\frac{1}{2}+\frac{1}{12})A^{*}_{1}U^{2}_{x}+A_{2}\frac{U}{x}U_{x}+\frac{5}{6}A_{5}\frac{U^{2}}{x^{2}}\nnm\\
&\geq\int^{1}_{0}\frac{x}{rr_{2,x}}\frac{\Theta^{2}_{1}}{(1-\frac{1}{c^{2}}\frac{\rho_{0}}{r_{2,x}}\frac{x}{r}\Theta_{1})^{3}}\left[(\frac{1}{12}-\frac{1}{c^{2}}\frac{\rho_{0}}{r_{2,x}}\frac{x}{r}\Theta_{1})U^{2}_{x}+(\frac{1}{3}-\frac{1}{c^{2}}\frac{\rho_{0}}{r_{2,x}}\frac{x}{r}\Theta_{1})\frac{x^{2}}{r^{2}}\frac{U^{2}}{x^{2}}\right]d\mu.\nnm
\end{align}
where  $r=r_{2}+\mu(r_{1}-r_{2}).$ Then, there exists the positive constants $c^{*}_{1}\geq \overline{c}~$ and $~M_{1}$ such that

\begin{align}&(\frac{1}{2}+\frac{1}{12})A^{*}_{1}U^{2}_{x}+A_{2}\frac{U}{x}U_{x}+\frac{5}{6}A_{0}\frac{U^{2}}{x^{2}}\geq M_{1}(U^{2}_{x}+\frac{U^{2}}{x^{2}}).\label{uniq-34}\end{align}
Similarly, there exist the positive constants $c^{*}_{2}\geq \overline{c}~$ and $~M_{2}$ such that
\begin{align}&\left(\frac{1}{4}\widetilde{A}_{1}
-\frac{1}{c^{4}}\frac{\alpha_{c}(x)}{x}b^{1}_{11}A_{3}(\upsilon_{1}+\upsilon_{2})\upsilon_{2}-\frac{1}{c^{4}}\frac{\alpha_{c}(x)}{x}b^{1}_{11}A_{3}(w_{1}+w_{2})w_{2}\right)U^{2}_{x}
\nnm\\
&~~~~~~~~~~~~~~~~~~~\geq M_{2}U^{2}_{x}.\label{uniq-35}\end{align}
Finally, we have for any $0<t<\min\{T_{1}^{*},T_{2}^{*}\}~$ and $c\geq\max\{c_{1}^{*},c_{2}^{*}\}$
 \begin{align}\Psi U^{2}_{x}+A_{2}\frac{U}{x}U_{x}+\frac{5}{6}A_{5}\frac{U^{2}}{x^{2}}\geq M(U^{2}_{x}+\frac{U^{2}}{x^{2}}),\label{uniq-333}\end{align}
where $M=\min\{M_{1},M_{2}\}.~$ By \eqref{uniq-1}-\eqref{uniq-32} and \eqref{uniq-333},  we apply the Gronwall inequality to obtain $U=R=0.$
This completes the proof.\end{proof}

\section{Non-relativistic limits for the case $\gamma=2$}\label{sec-7}
In this section, we consider the non-relativistic limits of smooth solutions to the free boundary problem \eqref{C-24}- \eqref{L1.2} obtained in section \ref{sec-5}.
\begin{lemma}Let
$(r^{c},u^{c},\upsilon^{c},\omega^{c})$ be a smooth solution to the problem \eqref{C-24}-\eqref{L1.2} satisfying Theorem \ref{THM-1} for any $~(x,t)\in[0,1]\times[0,T_{c_{0}}]$ and
$(r ,u,\upsilon,\omega)$ be a smooth solution of the problem \eqref{LCE} with \eqref{L1.2} satisfying \eqref{DRE} for any$~(x,t)\in[0,t]\times[0,T_{0}]$. Then, there exists a positive constant $T^{*}_{0}$ such that $0<T^{*}_{0}\leq\min\{T_{c_{0}},T_{0}\},$ and for any $0<t<T^{*}_{0}~$ and $c\geq c_{0}$
the estimate in \eqref{rate-non} holds.

\end{lemma}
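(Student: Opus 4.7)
The plan is to adapt the energy argument from Lemma~\ref{uniqueness} by comparing the relativistic solution $(r^{c},u^{c},\upsilon^{c},\omega^{c})$ with the Newtonian limit $(r,u,\upsilon,\omega)$ and treating the $O(c^{-2})$ corrections that distinguish \eqref{C-24}--\eqref{C-26} from \eqref{LCE} as forcing terms of size $c^{-2}$. First I would set $R:=r^{c}-r$, $U:=u^{c}-u$, $V:=\upsilon^{c}-\upsilon$, $W:=\omega^{c}-\omega$, and subtract \eqref{LCE} from \eqref{C-24}--\eqref{C-26}. The uniform bound \eqref{DRE} together with the hypotheses on $(r,u,\upsilon,\omega)$ give uniform $L^{\infty}$ control on every first derivative, on $u/x$, $R/x$, $\Theta^{c}$, etc., and also show that $\alpha_{c}(x)=\alpha_{0}(x)+O(c^{-2})$, $a_{11}^{c}=1+O(c^{-2})$, while $a_{12}^{c}/c^{2}$, $b_{11}^{c}/c^{2}$, $b_{12}^{c}/c^{2}$ are all $O(c^{-2})$ in $L^{\infty}$.

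Next I would linearise each coefficient in $(u^{c},\upsilon^{c},\omega^{c},r^{c}_{x},x/r^{c})$ around $(u,\upsilon,\omega,r_{x},x/r)$ via the fundamental theorem of calculus, exactly as in \eqref{AA1}. The resulting system for the differences has the schematic form
\[\alpha_{0}(x)a_{11}^{c}U_{t}-\Bigl\{\tfrac{\alpha_{0}^{2}(x)}{x}[2A_{1}R_{x}+A_{2}\tfrac{R}{x}]\Bigr\}_{x}+\tfrac{\alpha_{0}^{2}(x)}{x^{2}}(A_{3}R_{x}+2A_{4}\tfrac{R}{x})+G(U,V,W,\tfrac{R}{x},R_{x})=c^{-2}\mathcal{R}_{1},\]
\[V_{t}+\tfrac{u^{c}}{r^{c}}V+\text{(linear in }U,R\text{)}=c^{-2}\mathcal{R}_{2},\qquad W_{t}+\text{(linear in }U,R\text{)}=c^{-2}\mathcal{R}_{3},\]
where $A_{1},\dots,A_{5}$ are built as in \eqref{AA1} (with $r_{1},r_{2}$ replaced by $r^{c},r$) and the remainders $\mathcal{R}_{i}$ are controlled in $L^{\infty}$ by \eqref{DRE} and by the assumed regularity of $(r,u,\upsilon,\omega)$. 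I would then multiply the first equation by $U$, differentiate it once in $t$ and multiply by $U_{t}$, and multiply the other two equations by $V$ and $W$, integrating by parts in $x$ exactly as in \eqref{uniq-1}--\eqref{uniq-30}. Summing the resulting identities and applying Cauchy--Schwarz gives
\[\tfrac{d}{dt}\mathcal{E}(t)+M\!\int_{0}^{1}\tfrac{\alpha_{0}^{2}(x)}{x}\bigl(U_{x}^{2}+\tfrac{U^{2}}{x^{2}}\bigr)\,dx\leq C(K)\,\mathcal{E}(t)+C\,c^{-4},\]
where $\mathcal{E}(t)$ is the natural quadratic energy in $(U,U_{t},V,W,R,R_{x},R/x)$.

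The coercivity of the quadratic form on the left comes from the positivity argument \eqref{JQ1}--\eqref{uniq-333}: shrinking the time interval so that $r^{c}_{x},r_{x},x/r^{c},x/r$ stay close to $1$, and taking $c$ large, forces the positive $A_{1}$ contribution to dominate the cross terms in $A_{2},A_{5}$ and the indefinite $c^{-2}$ corrections coming from $b_{11}^{c}(\upsilon^{c}+\upsilon)\upsilon$ and $b_{11}^{c}(\omega^{c}+\omega)\omega$. Since $\mathcal{E}(0)=0$, Gronwall yields $\mathcal{E}(t)\leq Cc^{-4}$, hence $\|(U,V,W,R,R_{x})(t)\|_{0}^{2}+\|U/x(t)\|_{0}^{2}\leq Cc^{-4}$. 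Combining the $L^{2}$ control on $U_{x},U/x,V_{x},W_{x}$ with the boundary conditions $U(0,t)=R(0,t)=0$ and the one-dimensional embedding $H^{1}\hookrightarrow C^{0}$ then upgrades this to the $C^{0}$ rate \eqref{rate-non}.

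The main obstacle is exactly the coercivity step \eqref{uniq-33}--\eqref{uniq-333} in the new setting: the extra relativistic coefficients are no longer symmetric between two relativistic solutions, and one has to check that the mismatches between $\Theta^{c}$ and $1$, and between $\alpha_{c}(x)$ and $\alpha_{0}(x)$, produce either absorbable error terms of the form $C(|U|+|V|+|W|+|R|+|R_{x}|)$ (via the fundamental theorem of calculus) or honest $O(c^{-2})$ forcing. The rest is bookkeeping: every coefficient difference arising from the subtraction of \eqref{LCE} from \eqref{C-24}--\eqref{C-26} must be shown to fall into one of these two categories and hence be absorbable into $\mathcal{E}(t)$ or into the $c^{-4}$ right-hand side.
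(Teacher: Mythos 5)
Your proposal follows essentially the same route as the paper's Section~\ref{sec-7} argument: subtract \eqref{LCE} from \eqref{C-24}--\eqref{C-26}, Taylor-expand/linearize the coefficients (as in \eqref{AA1}) so that every purely relativistic contribution becomes an $O(c^{-2})$ forcing term controlled by the uniform bound \eqref{DRE}, test the radial equation with $U^{c}$, recover coercivity of the quadratic form in $(R^{c}_{x},R^{c}/x)$ for small time and large $c$ exactly as in \eqref{JQ1}--\eqref{uniq-333} (this is \eqref{NRL2} in the paper), test the angular and axial equations with $V^{c},W^{c}$, and conclude by Gronwall that the weighted $L^{2}$ energy is $O(c^{-4})$, then upgrade to the $C^{0}$ rate \eqref{rate-non} by the weighted embeddings.

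Two points where your write-up deviates or is not self-consistent. First, the time-differentiated estimate (testing with $U_{t}$) that you import from the uniqueness lemma is not used in the paper and is not needed: since $U^{c}=\partial_{t}R^{c}$, testing \eqref{RN1.7} with $U^{c}$ already turns the pressure terms into $\frac{d}{dt}$ of the quadratic form in $(R^{c}_{x},R^{c}/x)$, and, unlike in the uniqueness proof, no $U_{x}$-term appears on the right-hand side that needs absorbing, because the relativistic $u_{x}$-terms in \eqref{C-25}--\eqref{C-26} are outright $O(c^{-2})$ thanks to \eqref{DRE}. Adding that layer is harmless, but then your claim $\mathcal{E}(0)=0$ is wrong: $U_{t}(\cdot,0)=u^{c}_{t}(\cdot,0)-u_{t}(\cdot,0)\neq 0$ with identical initial data, it is only $O(c^{-2})$ (so the conclusion survives, but the step as stated does not). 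Second, your final upgrade to the $C^{0}$ rate invokes $L^{2}$ control of $V_{x}$ and $W_{x}$, which none of your proposed estimates produce (you only tested the angular/axial equations against $V$ and $W$); in the paper the rate \eqref{rate-non} is extracted from the weighted $L^{2}$ bounds on $(U^{c},V^{c},W^{c},R^{c},R^{c}_{x})$ via \eqref{WS2}--\eqref{WS3} and $H^{1}\hookrightarrow C^{0}$, not via an $H^{1}$ bound on $V^{c},W^{c}$, so this step of your sketch needs to be repaired or replaced by the paper's weighted-embedding argument.
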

\begin{proof} Based on the uniform estimates in \eqref{DRE}, the convergence of $(r^{c},u^{c},\upsilon^{c},\omega^{c})$ to $(r ,u,\upsilon,\omega)$ can be obtained by the standard procedure as mentioned in Remark \ref{LFF}. Thus, it is enough to prove \eqref{rate-non} in order to the proof of this Lemma.
\par
Set $U^{c}=u^{c}-u, V^{c}=\upsilon^{c}-\upsilon,~W^{c}=\omega^{c}-\omega,~R^{c}=r^{c}-r.$
Subtracting $\eqref{LCE}_{1}$ from \eqref{C-24} and using Taylor's expansion, we have
\begin{align}&\alpha_{0}(x)\left(U^{c}_{t}-\frac{\upsilon^{c}-\upsilon}{r^{c}}+\frac{\upsilon^{2}}{rr_{c}}R^{c}\right)-\left\{\frac{\alpha^{2}_{0}(x)}{x}
\left(\frac{1}{(r^{c}_{x})^{2}}\frac{x}{r}\frac{x}{r_{c}}\frac{R^{c}}{x}+\frac{x}{r}\frac{(r^{c}_{x}+r_{x})}{r^{2}_{x}(r^{c}_{x})^{2}}R^{c}_{x}+O(c^{-2})\right)\right\}_{x}\nnm\\
&+\frac{\alpha^{2}_{0}(x)}{x^{2}}\frac{1}{r^{c}_{x}}\frac{x^{2}}{r^{2}}\frac{(r+r^{c})}{r^{c}}\frac{x}{r^{c}}\frac{R^{c}}{x}+\frac{\alpha^{2}_{0}(x)}{x^{2}}\frac{x^{2}}{r^{2}}\frac{1}{r_{x}r^{c}_{x}}R^{c}_{x}+(\frac{\alpha^{2}_{0}(x)}{x^{2}}+\frac{\alpha^{2}_{0}(x)}{x})O(c^{-2})=0.\label{RN1.7}
\end{align}
Multiplying \eqref{RN1.7} by $U^{c},$
\begin{align}&\frac{d}{dt}\int^{1}_{0}\alpha_{0}\frac{(U^{c})^{2}}{2}dx\nnm\\
&+\frac{d}{dt}\int^{1}_{0}\frac{\alpha^{2}_{0}(x)}{x}\left(\frac{x}{r}\frac{(r^{c}_{x}+r_{x})}{r^{2}_{x}(r^{c}_{x})^{2}}(R^{c}_{x})^{2}+\frac{1}{(r^{c}_{x})^{2}}\frac{x}{r}\frac{x}{r_{c}}\frac{R^{c}}{x}R^{c}_{x}+\frac{1}{r^{c}_{x}}\frac{x^{2}}{r^{2}}\frac{(r+r^{c})}{r^{c}}\frac{x}{r^{c}}(\frac{R^{c})^{2}}{x^{2}}\right)\nnm\\
&\leq C(K)\int^{1}_{0}\alpha_{0}(x)\left((V^{c})^{2}+(U^{c})^{2}+(R^{c})^{2}\right)dx\nnm\\
&+C(K)\int^{1}_{0}\frac{\alpha^{2}_{0}}{x}\left((R^{c}_{x})^{2}+(\frac{R^{c}_{x}}{x^{2}})^{2}\right)dx+O(c^{-4}).\label{NRL1}
\end{align}
Similar to \eqref{JQ1}, there exists a positive constant $T^{*}_{0}$  such that $0<T^{*}_{0}\leq\min\{T_{c_{0}},T^{*}_{0}\},$ and for any $0<t<T^{*}_{0}~$
\begin{align}\frac{1}{1+\varepsilon_{T^{*}_{0}}}\leq\frac{x}{r},\frac{x}{r^{c}}\leq\frac{1}{1-\varepsilon_{T^{*}_{0}}},~1-\varepsilon_{T^{*}_{0}}\leq r_{x},~r^{c}_{x}\leq 1+\varepsilon_{T^{*}_{0}}.\nnm \end{align}
For small $T^{*}_{0}$ and large enough $c,$
\begin{align}&\frac{x}{r}\frac{(r^{c}_{x}+r_{x})}{r^{2}_{x}(r^{c}_{x})^{2}}(R^{c}_{x})^{2}+\frac{1}{(r^{c}_{x})^{2}}\frac{x}{r}\frac{x}{r_{c}}\frac{R^{c}}{x}R^{c}_{x}+\frac{1}{r^{c}_{x}}\frac{x^{2}}{r^{2}}\frac{(r+r^{c})}{r^{c}}\frac{x}{r^{c}}(\frac{R^{c})^{2}}{x^{2}}\nnm\\
&\geq\left(\frac{r_{x}}{r^{c}_{x}r^{2}_{x}}-2\frac{\varepsilon_{T^{*}_{0}}}{(1-\varepsilon_{T^{*}_{0}})(1-\varepsilon_{T^{*}_{0}})}\right)(R^{c}_{x})^{2}+\frac{1}{2}\left(\frac{x}{r^{c}}\frac{x^{2}}{(r^{c})^{2}}-\frac{x}{r^{c}}\frac{\varepsilon_{T^{*}_{0}}}{(1-\varepsilon_{T^{*}_{0}})(1-\varepsilon_{T^{*}_{0}})}\right)\frac{R^{2}}{x^{2}}\nnm\\
&\geq C(K)\left((R^{c}_{x})^{2}+\frac{(R^{c})^{2}}{x^{2}}\right).\label{NRL2}
\end{align}
Similarly, subtracting  $\eqref{LCE}_{2}$ from \eqref{C-25} ,

\begin{equation}\int^{1}_{0}\alpha_{0}(x)(V^{c})^{2}dx\leq C(K)\int^{1}_{0}\alpha_{0}(x)\left((U^{c})^{2}+(R^{c})^{2}\right)dx+O(c^{-4}).\nnm\end{equation}
which in combination with \eqref{NRL1} and \eqref{NRL2} yields
\begin{align}\int^{1}_{0}\alpha_{0}(x)\left[(V^{c})^{2}+(U^{c})^{2}+(W^{c})^{2}\right]dx +\int^{1}_{0}\frac{\alpha^{2}_{0}(x)}{x}\left((\frac{R^{c}}{x})^{2}+(R^{c})^{2}\right)dx\leq O(c^{-4})\nnm\end{align}
Then, using the weighted embedding estimates in \eqref{WS2}-\eqref{WS3} and $H^{1}(0,1)\hookrightarrow C^{0}[0,1]$, we can similarly prove \eqref{rate-non}.\end{proof}

\section{Results for the cases $\gamma\neq2$}\label{sec-8}
This section is to generalize our result to the more general case of $\gamma\neq2.$ We will consider the well-posedness and non-relativistic limit of local smooth solution for the free boundary value problem \eqref{cylind-R21},\eqref{cylind-R24} and \eqref{1.2}. Due to the physical vacuum condition $\eqref{1.2}_{5}$, the value of $\gamma$ confirms the rate of degeneracy near the vacuum boundary $x=1$, but  it will not affect the rate of degeneracy  near the original point $x=0,$ since $\rho_{0}\sim (1-x)^{\frac{1}{\gamma-1}}$ ~as ~$x\rightarrow1.$ In fact, the rate of degeneracy is more strong for the smaller value of $\gamma.$ Thus, we divide $\gamma$ into the two cases $1<\gamma<2$ and $\gamma>2.$ In the spirit of idea in \cite{LuoT}, we can prove the well-posedness and non-relativistic limits of local smooth solutions  by the similar argument to the case for $\gamma=2$ and omit it here, based on the following main equations:
\begin{align}&a^{\gamma}_{11}\alpha_{c}(x)\partial^{2}_{t}u+\partial_{t}a^{\gamma}_{11}\alpha_{c}(x)u_{t}
-\partial_{t}\left(\alpha_{c}(x)a^{\gamma}_{11}\frac{\upsilon^{2}}{r}\right)
-\left[J_{\gamma}\Theta^{\gamma}\frac{\alpha^{2}_{c}(x)}{x}(\gamma\frac{u_{x}}{r^{2}_{x}}+(\gamma-1)\frac{x}{rr_{x}}\frac{u}{x})\right]_{x}\nnm\\
&-\left[\frac{\gamma J_{\gamma}}{c ^{2}r_{x}}\Theta^{\gamma-2}\frac{\alpha^{2}_{c}(x)}{x}(uu_{t}
+\upsilon\upsilon_{t}
+\omega\omega_{t})\right]_{x}
+J_{\gamma}\Theta^{\gamma}\frac{x}{r}\frac{\alpha^{2}_{c}(x)}{x^{2}}\left((\gamma-1)\frac{u_{x}}{r_{x}}
+\gamma\frac{x}{r}\frac{u}{x}\right)\nnm\\
&+\frac{\gamma J_{\gamma}x}{c ^{2}r}\Theta^{\gamma-2}\frac{\alpha^{2}_{c}(x)}{x^{2}}(uu_{t}+\upsilon\upsilon_{t}+\omega\omega_{t})-\alpha_{c}(x)\frac{2-\gamma}{\gamma-1}(\frac{\alpha_{c}(x)}{x})_{x}\Theta^{\gamma}J_{\gamma}\left[\gamma\frac{u_{x}}{r^{2}_{x}}+(\gamma-1)\frac{x}{rr_{x}}\frac{u}{x}\right]\nnm\\
&-\alpha_{c}(x)\frac{2-\gamma}{\gamma-1}(\frac{\alpha_{c}(x)}{x})_{x}J_{\gamma}\Theta^{\gamma-2}\frac{1}{r_{x}}\left[uu_{t}+\upsilon\upsilon_{t}+\omega\omega_{t}\right]+\frac{1}{c^{2}}\left[a_{12}(\frac{x}{r})^{\gamma-1}\frac{1}{r^{\gamma}_{x}}\frac{\alpha^{2}_{c}(x)}{x}(u_{x}+\frac{xr_{x}}{r}\frac{u}{x})u\right]_{t}=0,\nnm\\
&~~\upsilon_{t}+\frac{u}{r}\upsilon-\frac{b^{\gamma}_{11}}{c^{2}}\frac{\alpha_{c}(x)}{x}(u_{x}+\frac{u}{r}r_{x})\upsilon+\frac{b^{\gamma}_{12}}{c^{2}}(u_{t}-\frac{\upsilon^{2}}{r})
u~\upsilon=0,\nnm\\
&~~\omega_{t}-\frac{b^{\gamma}_{21}}{c^{2}}\frac{\alpha_{c}(x)}{x}(u_{x}+\frac{u}{r}r_{x})\omega+\frac{b^{\gamma}_{22}}{c^{2}}(u_{t}-\frac{\upsilon^{2}}{r})
u~\omega=0,\nnm
\end{align}
where

\begin{equation}J_{\gamma}:=(\frac{x}{rr_{x}})^{\gamma-1}\left(1-\frac{1}{c^{2}}(\frac{\rho_{0}}{r_{x}}\frac{x}{r}\Theta)^{\gamma-1}\right)^{\frac{2\gamma-1}{1-\gamma}},~ \alpha_{c}(x)=\left(\frac{\rho_{0}}{(1+\frac{\rho^{\gamma-1}_{0}}{c^{2}})^{1-\gamma}\Theta_{0}}\right)^{\gamma-1}x.\nnm\end{equation}
These equations follow from the similar way as that of the equations in \eqref{C-24}-\eqref{C-25}.
\section*{Acknowledgements}
\qquad Mai's work is partially supported by the National Natural Science Foundation of China
(Nos.11771071, 11601246), and the Program of Higher Level talents of Inner Mongolia University (No.21100-5165105).  Li's work is supported by the National Natural Science Foundation of China
(Nos.11171228, 11231006 and 11225102), and the Importation and Development of HighCaliber Talents Project of Beijing Municipal Institutions (No. CIT\&TCD20140323). P.Marcati is partially supported by INdAM-GNAMPA, PRIN 2015YCJY3A-003 and the EU MCS Network {\it ModCompShock }.

\end{document}